\newtheorem{thm}{Theorem}[section] 
\newtheorem{lem}[thm]{Lemma}
\theoremstyle{definition}
\newtheorem{rem}[thm]{Remark}
\numberwithin{equation}{section}
\newcommand{\Real}{\mathbb R}
\newcommand{\eps}{\varepsilon}
\newcommand{\F}{\mathcal{F}}
\newcommand{\one}[1]{\mathbf{1}_{\{#1\}}}
\renewcommand{\P}{\mathbb{P}}
\newcommand{\E}{\mathbb{E}}
\DeclareMathOperator*{\sign}{sign}
\renewcommand{\Re}{\mathrm{Re}}
\renewcommand{\Im}{\mathrm{Im}}
\def\Xint#1{\mathchoice
   {\XXint\displaystyle\textstyle{#1}}%
   {\XXint\textstyle\scriptstyle{#1}}%
   {\XXint\scriptstyle\scriptscriptstyle{#1}}%
   {\XXint\scriptscriptstyle\scriptscriptstyle{#1}}%
   \!\int}
\def\XXint#1#2#3{{\setbox0=\hbox{$#1{#2#3}{\int}$}
     \vcenter{\hbox{$#2#3$}}\kern-.5\wd0}}
\def\dashint{\Xint-}
\newcommand{\doublewidetilde}[1]{{%
  \mathpalette\double@widetilde{#1}%
}}
\newcommand{\double@widetilde}[2]{%
  \sbox\z@{$\m@th#1\widetilde{#2}$}%
  \ht\z@=.9\ht\z@
  \widetilde{\box\z@}%
}
\begin{document}

\title[Linear filtering with fractional noises]
{Linear filtering with fractional noises:  large time and small noise asymptotics}

\author{D. Afterman}%
\address{Department of Statistics,
The Hebrew University,
Mount Scopus, Jerusalem 91905,
Israel}
\email{Danielle.Afterman@mail.huji.ac.il }

\author{P. Chigansky}%
\address{Department of Statistics,
The Hebrew University,
Mount Scopus, Jerusalem 91905,
Israel}
\email{pchiga@mscc.huji.ac.il}

\author{M. Kleptsyna}%
\address{Laboratoire de Statistique et Processus,
Universite du Maine,
France}
\email{marina.kleptsyna@univ-lemans.fr}

\author{D. Marushkevych}%
\address{Laboratoire de Statistique et Processus,
Universite du Maine,
France}
\email{dmytro.marushkevych.etu@univ-lemans.fr}

\thanks{P. Chigansky is supported by ISF  1383/18
grant}

\subjclass{93E11, 60G22, 60H10}

\keywords{
stochastic filtering,  fractional Brownian motion, asymptotic analysis
}

\date{\today}%

\begin{abstract}
The classical state-space approach to optimal estimation of stochastic processes is efficient when 
the driving noises are generated by martingales. In particular, the weight function of the 
optimal linear filter, which solves a complicated operator equation in general, simplifies to  
the Riccati ordinary differential equation in the martingale case. This reduction lies in the 
foundations of the Kalman-Bucy approach to linear optimal filtering. In this paper we consider a 
basic Kalman-Bucy model with noises, generated by independent fractional Brownian motions, and 
develop a new method of asymptotic analysis of the integro-differential filtering equation arising 
in this case. We establish existence of the steady-state error limit and find its asymptotic scaling 
in the high signal-to-noise regime. Closed form expressions are derived in a number of important cases.
 
\end{abstract}

\maketitle 


\section{Introduction}
\subsection{The Kalman-Bucy problem}
In its most basic form, the Kalman-Bucy filtering problem \cite{KB61} is concerned with estimation of the 
state process, generated by the linear stochastic equation 
\begin{equation}\label{Xeq}
X_t = \beta \int_0^t X_s ds + W_t,   
\end{equation}
given a trajectory of the observation process 
\begin{equation}\label{Yeq}
Y_t = \mu \int_0^t X_s ds + \sqrt{\eps} V_t.
\end{equation}
Here $\beta$ and $\mu\ne 0$ are fixed real constants, $\eps>0$ is the observation noise intensity parameter, 
and $W=(W_t; t\in \Real_+)$ and $V=(V_t; t\in \Real_+)$ are independent Brownian motions.

The filtering problem consists of finding the optimal estimator $\widehat X_t = \E(X_t|\F^Y_t)$, 
whose mean squared error
$
P_t = \E (X_t-\widehat X_t)^2
$ 
is minimal among all functionals, measurable with respect to $\F^Y_t=\sigma\big\{Y_s, s\le t\big\}$. 
For the linear Gaussian model \eqref{Xeq}-\eqref{Yeq}, this problem has a famously elegant solution, discovered in \cite{KB61}.
The filtering estimator in this case can be generated by the stochastic differential equation 
\[
d\widehat X_t = \beta \widehat X_t dt + \frac{\mu P_t}{\eps} (dY_t - \mu \widehat X_t dt), 
\]
and the corresponding minimal error $P_t$ solves the Riccati o.d.e.
\begin{equation}\label{Riccati}
\dot P_t = 2\beta P_t + 1 -  (\mu/\sqrt\eps)^2 P_t^2,  
\end{equation}
subject to zero initial conditions. Elementary analysis of \eqref{Riccati} shows that the 
filtering error converges to the steady-state limit 
\begin{equation}\label{PT}
\lim_{T\to\infty} P_T\Big(\beta, \frac \mu {\sqrt{\eps}}\Big) = \frac{\beta + \sqrt{\beta^2+\mu^2/\eps}}{\mu^2/\eps} 
\end{equation}
and reveals its scaling with respect to the noise intensity 
\begin{equation}\label{PTeps}
P_T\Big(\beta, \frac \mu {\sqrt{\eps}}\Big) = \frac{\sqrt{\eps}}{\mu}\big(1+o(1)\big),\quad \text{as\ } \eps  \to 0, \quad \forall T>0.
\end{equation} 
These limiting quantities are of considerable interest, as they exhibit the fundamental accuracy limitations
in the problem.

\subsection{Fractional noises}
A natural generalization of the system \eqref{Xeq}-\eqref{Yeq} is obtained by replacing $W$ and $V$ 
with independent {\em fractional} Brownian motions (fBm) with the Hurst exponents $H_1$ and $H_2$, 
respectively. Recall that the fBm is a centred Gaussian process with covariance function  
\begin{equation}\label{KV}
K(s,t)  = \frac 1 2\Big(s^{2H} + t^{2H} - |s-t|^{2H}\Big), \quad s,t\in \Real_+,
\end{equation}
where $H\in (0,1)$ is its Hurst exponent.

This process coincides with the standard Brownian motion for $H=\frac 1 2$, but otherwise exhibits a rich diversity of properties, 
which makes it an interesting mathematical object and an important tool in modelling, \cite{PT01}. 
In particular, it is neither a semi-martingale nor a Markov process. For $H>\frac 1 2$ increments of the fBm are positively 
correlated and have long range dependence 
\[
\sum_{n=1}^\infty \E V_1 (V_{n+1}-V_n)=\infty.
\]
This property makes it useful in design and analysis of engineering systems, \cite{BP88}.

Consider the integro-differential equation
\begin{equation}\label{maineq}
\frac{\partial}{\partial s} \int_0^T   g_T(r)\frac{\partial}{\partial r} K_V(r,s)dr
\\
+ \frac{\mu^2}{\eps} \int_0^T K_X(r,s) g_T(r)dr = \frac{\mu}{\sqrt \eps} K_X(s,T),
\end{equation} 
where $K_V(s,t)$ and $K_X(s,t)$ are the covariance functions of the fBm $V$ and the state process $X$.
If this equation has a sufficiently regular solution $g_T(\cdot)$ (see Appendix \ref{sec:A} for some details) 
then standard calculations, see e.g. \cite[Lemma 10.2]{LS1}, 
show the optimal estimator is given by the stochastic integral 
$$
\widehat X_T = \frac 1 {\sqrt{\eps}}\int_0^T g_T(s)dY_s,
$$
and the filtering error is determined by the solution to \eqref{maineq} through  
the formula 
\begin{equation}\label{PTfun}
P_T \Big(\beta, \frac{\mu}{\sqrt \eps}\Big)= \frac {\sqrt \eps} \mu \left(
\frac{\partial}{\partial s} \int_0^T   g_T(r)\frac{\partial}{\partial r} K_V(r,s)dr
\right)_{\displaystyle\big|s:=T}.
\end{equation}

In the standard Kalman-Bucy case these equations simplify in two ways. First, when the observation noise is white, i.e.,   
$H_2=\frac 1 2$, the integro-differential terms in \eqref{maineq} and \eqref{PTfun} reduce  merely to $g_T(s)$, and 
\eqref{maineq} takes the form of an integral equation. 
Further, when the state noise is white as well, $H_1=\frac 1 2$, the state process is Markov, and its covariance 
kernel $K_X(s,t)$ factorizes into a product of exponentials. This allows to express solutions to \eqref{maineq} in terms of 
the Riccati o.d.e. \eqref{Riccati}.

In the more general, fractional setting under consideration, equation \eqref{maineq} retains its 
integro-differential form, and questions of solvability and asymptotic behaviour of its solution  
remained till now mainly open.
\medskip

\begin{enumerate}
\addtolength{\itemsep}{0.4\baselineskip}

\item How does the filtering error scale with observation noise intensity as $\eps\to 0$? 

\item Does it converge to a limit as $T\to\infty$?

\item How are these two asymptotics related? 

\item Do the limits admit of reasonably explicit expressions? 

\end{enumerate}

\medskip
\noindent  
Answering such questions requires an entirely different approach, which is the main focus of the paper.

\section{The main results}
Let $X$ and $Y$ be the processes generated by equations \eqref{Xeq} and \eqref{Yeq}, driven by independent fBm's
$W$ and $V$ with the Hurst parameters $H_1, H_2\in (0,1)$, respectively. Define 
$
P_T\big(\beta, \frac \mu {\sqrt \eps}\big) := \E \big(X_T-\E (X_T|\F^Y_T)\big)^2.
$
To avoid trivialities $\mu\ne 0$ is assumed throughout, but the values of all other parameters can be arbitrary.  

\subsection{General asymptotics}

Our principal result is the following theorem. 

\begin{thm}\label{thm:main}\
The large time limit exists
\begin{equation}\label{large-time}
P_\infty\Big(\beta, \frac \mu {\sqrt \eps}\Big) = \lim_{T\to\infty} P_T\Big(\beta, \frac \mu {\sqrt \eps}\Big),
\end{equation}
and, for any $T>0$, the filtering error satisfies the scaling property
\begin{equation}\label{small-noise}
\lim_{\eps\to 0}\eps^{-\nu} P_T\Big(\beta, \frac \mu {\sqrt \eps}\Big) = P_\infty\big(0, \mu\big) \quad \text{with\ } 
 \nu = \frac{H_1}{1+H_1-H_2}.
\end{equation}
\end{thm}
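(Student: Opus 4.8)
The plan is to reduce the small-noise asymptotics at a fixed horizon $T$ to the large-time asymptotics already granted by Part 1, by exploiting the self-similarity of the fractional noises together with a scaling of the drift. The starting point is two structural facts. First, the observation-noise covariance \eqref{KV} is homogeneous of degree $2H_2$, so that $\partial_1 K_V(a\rho,a\sigma)=a^{2H_2-1}\partial_1 K_V(\rho,\sigma)$ for every $a>0$. Second, and crucially, the state equation \eqref{Xeq} has a drift-rescaling property: writing $\bar X_\tau:=a^{-H_1}X_{a\tau}$ and using $W_{a\,\cdot}\stackrel{d}{=}a^{H_1}W_\cdot$, one checks that $\bar X$ solves the same equation \eqref{Xeq} but with drift $\beta$ replaced by $\bar\beta:=\beta a$; consequently $K_X(a\sigma,a\tau)=a^{2H_1}K_{\bar X}(\sigma,\tau)$, where $K_{\bar X}$ is the state covariance for drift $\beta a$.

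First I would substitute $s=a\sigma$, $r=a\rho$, $T=aL$ and $g_T(r)=c\,h(\rho)$ into the filtering equation \eqref{maineq}. Using the two homogeneity relations above, every term acquires an explicit power of $a$; choosing $a=\eps^{\theta}$ with $\theta=\tfrac{1}{2(1+H_1-H_2)}$ makes the coefficient of the zeroth-order term equal to $\mu^2$, and choosing $c=a^{2H_1-2H_2+1}/\sqrt\eps=\eps^{(H_1-H_2)\theta}$ normalizes the right-hand side to $\mu\,K_{\bar X}(\sigma,L)$. The rescaled weight $h$ then solves \emph{exactly} the filtering equation for the normalized problem with $\eps=1$, drift $\bar\beta=\beta a$, parameter $\mu$, and horizon $L=T/a$. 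Tracking the same substitution through the error functional \eqref{PTfun} and collecting the powers of $\eps$ (from the prefactor $\sqrt\eps/\mu$, from $c$, and from the factor $a^{2H_2-1}$ produced by $\partial_1 K_V$) yields the exact scaling identity
\[
\eps^{-\nu}\,P_T\Big(\beta,\tfrac{\mu}{\sqrt\eps}\Big)=P_{T/a}\big(\beta a,\mu\big),\qquad \nu=2H_1\theta=\frac{H_1}{1+H_1-H_2},
\]
since the accumulated exponent $\tfrac12+(H_1-H_2)\theta+(2H_2-1)\theta$ collapses to $2H_1\theta=\nu$.

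It remains to take $\eps\to0$ on the right-hand side. As $\eps\to0$ we have $a=\eps^\theta\to0$, hence simultaneously $L=T/a\to\infty$ and $\bar\beta=\beta a\to0$, and I would conclude by establishing the joint limit $P_{T/a}(\beta a,\mu)\to P_\infty(0,\mu)$. One route combines Part 1, which gives $P_L(\bar\beta,\mu)\to P_\infty(\bar\beta,\mu)$ as $L\to\infty$ for each fixed $\bar\beta$, with the continuity of $\bar\beta\mapsto P_\infty(\bar\beta,\mu)$ at $\bar\beta=0$, provided the convergence in $L$ is locally uniform in $\bar\beta$. A more hands-on route works directly with the integral equation: the drift enters only through $K_{\bar X}$, and $K_{\bar X}\to K_W$ (the driftless fBm covariance) as $\bar\beta\to0$ with a perturbation of order $O(\bar\beta)=O(a)$; passing this smallness through the solution operator and the endpoint functional shows that the drift contribution is asymptotically negligible, so the limit coincides with the driftless one $\lim_{L\to\infty}P_L(0,\mu)=P_\infty(0,\mu)$.

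I expect this last step to be the main obstacle. The scaling identity is essentially forced once the drift-rescaling property is noticed, but the joint limit requires genuine control of the large-horizon behaviour of the solution of \eqref{maineq} — in particular the delicate endpoint evaluation at $s=T$ in \eqref{PTfun} — and this control must be uniform as the rescaled drift vanishes. Establishing such uniformity, presumably by quantifying how the solution and its boundary trace depend on both the drift parameter and the horizon, is where the real analytic work lies.
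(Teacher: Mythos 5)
Your scaling identity for Part 2 is correct and is, in essence, the time--domain form of what the paper does in the Laplace domain: the paper rescales the spectral variable $z\mapsto \eps^{-\gamma}z$ with $\gamma=\frac{1}{2+\alpha_2-\alpha_1}=\frac{1}{2(1+H_1-H_2)}$ (your $\theta$), under which the structural function $\Lambda$ transforms into the one with parameters $(\eps^\gamma\beta,\mu)$, and then reads off $P_T(\beta,\mu/\sqrt\eps)\asymp\eps^{\nu}P_\infty(\eps^\gamma\beta,\mu)$. In fact your identity $\eps^{-\nu}P_T(\beta,\mu/\sqrt\eps)=P_{T/a}(\beta a,\mu)$ holds exactly and can be obtained even more directly from self-similarity of the two fBm's, without touching the integral equation, so this part of your argument is a clean and legitimate reduction.

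However, the proposal has two genuine gaps. First, you prove nothing about Part 1: you take the existence of $P_\infty(\bar\beta,\mu)$ as ``granted.'' In the paper this is the bulk of the work --- the Laplace transform representation of the solution of \eqref{maineq}, the removal of singularities leading to a Hilbert boundary value problem, the Sokhotski--Plemelj reduction to the integral equations \eqref{pqeq} with a contraction operator, and the resulting explicit limit \eqref{Pinfty_expr}. A proof of Theorem \ref{thm:main} must contain this (or a substitute), since Part 1 is half the statement and Part 2 as you set it up depends on it.

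Second, even granting Part 1, your reduction requires the \emph{joint} limit $P_{L}(\bar\beta,\mu)\to P_\infty(0,\mu)$ as $L\to\infty$ and $\bar\beta\to0$ simultaneously along $\bar\beta=\beta T/L$. You correctly flag this as the main obstacle, but you do not close it: pointwise convergence in $L$ for each fixed $\bar\beta$ plus continuity of $P_\infty(\cdot,\mu)$ at $0$ is not enough without local uniformity in $\bar\beta$ of the convergence in $L$, and your ``hands-on'' perturbative route (treating the drift as an $O(\bar\beta)$ perturbation of $K_W$ and pushing it through the solution operator and the endpoint functional in \eqref{PTfun}) is precisely the kind of quantitative control that is hard to obtain directly from the second-kind-with-singular-kernel equation \eqref{maineq}. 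The paper gets this uniformity for free from its machinery: the contraction estimates \eqref{asym_est} hold with a constant uniform in both $T$ and $\eps$ (hence in the rescaled drift), and the limit expression \eqref{Pinfty_expr} is manifestly continuous in $\beta$. Without supplying an argument of comparable strength, your proof of Part 2 remains conditional.
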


\begin{rem}
\

\medskip
\noindent
a) As in the standard Kalman-Bucy problem, the first order term of the small noise asymptotics \eqref{small-noise} 
does not depend on the interval length $T$ or the drift of the state process $\beta$, cf. \eqref{PTeps}. 
This is not entirely intuitive in the fractional case, since the memory of the optimal filter in the non-Markov setup, 
and the more so for processes with long range dependence, does not have to be a priori negligible  as $\eps\to 0$. 

\medskip
\noindent
b) The rate $\nu$ in \eqref{small-noise} coincides with the optimal minimax rate in the nonparametric problem of estimating a 
deterministic function observed in fractional type noise, \cite{W96}. 
This agrees with the smoothness of the fBm paths, which are Holder continuous with an exponent arbitrarily close to $H$.   
In particular, the estimators suggested in \cite{W96} should be rate optimal for the filtering problem 
under consideration, however, with a suboptimal constant. The dependence of $\nu$ on $H_1$ and $H_2$ agrees with the intuition, 
that the filtering accuracy should improve with path regularity of the processes which generate the noises.
\end{rem}

\subsection{Special cases}

In principle, the limit in \eqref{large-time} is derived in the proof as an explicit but a rather cumbersome expression. 
It can be significantly simplified in a number of  meaningful cases, as detailed in the 
theorems below. The key ingredient of the emerging  formulas is the complex valued {\em structural function}
\begin{equation}\label{Lambda}
\Lambda(z; H_1,H_2) = (z^2-\beta^2) \kappa(H_2) \left(\frac z i\right)^{1-2H_2} -\frac{\mu^2}{\eps} \kappa(H_1) \left(\frac z i\right)^{1-2H_1},
\end{equation} 
where $z$ takes values in the upper half of the complex plane and  
\begin{equation}\label{kappa}
\kappa(H)=\Gamma(2H+1)\sin (\pi H).
\end{equation}
Its domain is extended to the lower half-plane through conjugation  
\[
\Lambda(z; H_1,H_2)=\overline{\Lambda(\overline{z}; H_1,H_2)}.
\]

The structure of the filtering problem turns out to be largely determined by the configuration of zeros
of this function. A simple calculation shows that 
$\Lambda(z; H_1,H_2)$ has the unique complex zero $z_0$ in the first quadrant when $H_1>H_2$. As $H_1$ approaches $H_2$ this zero moves 
towards positive real semiaxis, and, at $H_1=H_2$, degenerates to the purely real value 
\[
t_0 = \sqrt{\beta^2 + \mu^2/\eps}.
\]
When $H_1< H_2$ it has no zeros at all.

\subsubsection{State/observation noises of the same type}

The following result details the limiting behaviour of the filtering error, when the state and observation noises have the same Hurst exponent.

\begin{thm}\label{thm:1} Let $H_1=H_2 =: H\in (0,1)$, then 
\begin{equation}\label{KlLBfla}
P_\infty\Big(\beta, \frac \mu {\sqrt \eps}\Big)  = \frac 1 2 \Gamma(2H+1)  t_0^{-2H} \Big(1+\sin (\pi H)\frac{t_0+\beta}{t_0-\beta}\Big),
\end{equation}
and, consequently\footnote{here and below,  $g(\eps)\asymp h(\eps)$ stands for $\lim_{\eps\to 0}g(\eps)/h(\eps)=1$ },  
\begin{equation}\label{KlLBfla_small_noise}
 P_T\Big(\beta, \frac \mu {\sqrt \eps}\Big) \asymp \frac 1  2    \Gamma(2H+1) \big( 1+ \sin (\pi H) \big) (\eps/\mu^2)^{  H }, \quad \text{as}\ \eps\to 0.
\end{equation}
\end{thm}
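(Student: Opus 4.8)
The plan is to derive \eqref{KlLBfla} by specialising to $H_1=H_2=:H$ the general—but unwieldy—representation of $P_\infty\big(\beta,\mu/\sqrt\eps\big)$ produced in the proof of Theorem \ref{thm:main}, and then to read off \eqref{KlLBfla_small_noise} from it. The decisive simplification in this regime is that the structural function \eqref{Lambda} factors as
\begin{equation*}
\Lambda(z;H,H)=\kappa(H)\Big(\frac z i\Big)^{1-2H}\big(z^2-t_0^2\big),\qquad t_0=\sqrt{\beta^2+\mu^2/\eps},
\end{equation*}
since the two terms share the common factor $\kappa(H)(z/i)^{1-2H}$. Thus, up to this branch factor, $\Lambda$ is a quadratic whose zeros are the real points $\pm t_0$; the one relevant to the analysis is $z=t_0$, the degenerate real limit of the first-quadrant zero $z_0$ that exists for $H_1>H_2$. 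Consequently the contour/residue computations underlying the general formula collapse onto a single simple pole sitting on the real axis, which is what makes the closed form \eqref{KlLBfla} possible.

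Concretely, I would start from the expression for the $T\to\infty$ limit of the error functional \eqref{PTfun} furnished by the proof of Theorem \ref{thm:main}, in which the stationary weight function $g_T$ and the error are written through the Wiener–Hopf factorisation of $\Lambda$. Inserting the factored $\Lambda(z;H,H)$ renders this factorisation explicit and reduces the remaining work to a residue evaluation at $z=t_0$. The power $t_0^{-2H}$ then comes out of the branch factor $(t_0/i)^{1-2H}$ together with the residue of $(z^2-t_0^2)^{-1}$ at $z=t_0$; the numerical constants assemble into $\tfrac12\Gamma(2H+1)$ and $\tfrac12\kappa(H)=\tfrac12\Gamma(2H+1)\sin(\pi H)$ from the normalisation of the covariance \eqref{KV} and the prefactor $\kappa(H)$ in \eqref{Lambda}; and the rational correction $\tfrac{t_0+\beta}{t_0-\beta}$ is produced by the $\beta$-dependent numerator terms (the $K_X$ contribution in \eqref{maineq}) evaluated at the pole. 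Collecting these gives \eqref{KlLBfla}, which can be cross-checked against the special case solved in \cite{KlLB02}.

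The small-noise statement \eqref{KlLBfla_small_noise} is then immediate from part \textbf{2} of Theorem \ref{thm:main}. For $H_1=H_2=H$ the rate in \eqref{small-noise} is $\nu=H_1/(1+H_1-H_2)=H$, so $P_T\big(\beta,\mu/\sqrt\eps\big)\asymp\eps^{H}P_\infty(0,\mu)$. Evaluating the formula \eqref{KlLBfla} at $\beta=0$ with second argument $\mu$ gives $t_0=\mu$ and hence $P_\infty(0,\mu)=\tfrac12\Gamma(2H+1)\big(1+\sin(\pi H)\big)\mu^{-2H}$; multiplying by $\eps^{H}$ and using $\eps^{H}\mu^{-2H}=(\eps/\mu^2)^{H}$ yields \eqref{KlLBfla_small_noise}. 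As an internal check, sending $\eps\to0$ directly in \eqref{KlLBfla}, where $t_0\to\infty$ with $t_0^{-2H}=(\eps/\mu^2)^{H}(1+o(1))$ and $\tfrac{t_0+\beta}{t_0-\beta}\to1$, reproduces the same leading term.

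I expect the main difficulty to lie in the first step rather than in the algebra that follows. The general representation from Theorem \ref{thm:main} is by design cumbersome, and $H_1=H_2$ is exactly the borderline configuration in which the complex zero $z_0$ has just reached the real axis. The delicate point is therefore to verify that the limiting Wiener–Hopf factorisation stays non-degenerate in this confluent situation—that the zero meeting the real line yields an honest simple pole at $t_0$ rather than a spurious singularity—and that the residue extraction, together with the interchange of the $T\to\infty$ limit and the contour manipulations, remains justified when the pole is real. Once this is secured, the specialisation and the passage to the $\eps\to0$ limit are routine.
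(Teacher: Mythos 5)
Your deduction of \eqref{KlLBfla_small_noise} from \eqref{KlLBfla} via part 2 of Theorem \ref{thm:main} is exactly what the paper does, and your factorisation of $\Lambda(z;H,H)$ with the real zeros $\pm t_0$ is correct (Lemma \ref{lem:4.2}(b)). The gap is in the first step. The closed-form limit \eqref{Pinfty_expr} produced in the proof of Theorem \ref{thm:main} is derived only for $\alpha_1>\alpha_2$, precisely because there $\Lambda$ has no zeros and the Sokhotski--Plemelj solution of the boundary value problem for $\Phi_0,\Phi_1$ contains a single unknown, $\psi(0)$. You cannot specialise that formula to $H_1=H_2$: once $\Lambda/N_\alpha$ vanishes at $\pm t_0$, the representation \eqref{hatg_equal_alphas} of $\widehat g$ acquires two poles on the real line, their removal imposes the two extra conditions \eqref{more_eq}, and correspondingly the solution of the Hilbert problem carries two additional free constants $k^S_0,k^D_0$. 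The substance of the paper's proof is the asymptotic solution, as $T\to\infty$, of the resulting linear system in $\psi(0),k^S_0,k^D_0$; the outcome $k^S_0+k^D_0\to0$, $\psi(0)\to0$ and $k^S_0-k^D_0\to \frac{\mu_\eps}{t_0-\beta}\frac{N_\alpha^-(t_0)}{X^-(t_0)}$ is where the factor $(t_0-\beta)^{-1}$, and ultimately $\frac{t_0+\beta}{t_0-\beta}$, actually originates --- not from ``the $K_X$ numerator evaluated at the pole'', and not from a residue at $t_0$: in the final expression \eqref{oneway} the point $t_0$ is an \emph{integrable} singularity of the integrand, and the answer is obtained by an explicit contour evaluation of $\int_0^\infty \frac{N_\alpha^+(t)-N_\alpha^-(t)}{t^2-t_0^2}\big(\tfrac{t-\beta}{t_0-\beta}(t/t_0)^{\frac{1-\alpha}2}-1\big)\,dt$, not by picking up a pole.

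Two further points. Your worry about the ``confluent'' zero is misplaced: the paper never lets $H_1\downarrow H_2$ in a pre-existing factorisation, but sets up the equivalent problem afresh for $\alpha_1=\alpha_2$ with $X(z)=(-z)^{(\alpha-1)/2}$, so there is no degenerating Wiener--Hopf factor to control. More importantly, for $H<\frac12$ (i.e.\ $\alpha\in(1,2)$) the quantities \eqref{cond1} and \eqref{PTfla} are limits of differences of two individually divergent terms (Remark \ref{rem:4.2}), and the paper must run a separate second-order cancellation argument in that subcase; a plan resting on a single residue extraction from \eqref{PT_alpha_small} could at best cover $H>\frac12$, which is the range already treated in \cite{KlLB02}.
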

 
\begin{rem}

Formula \eqref{KlLBfla} was previously derived in \cite{KlLB02} for $H\in (\frac 1 2,1)$, using a completely different method, 
based on the innovation representation of the fBm from \cite{NVV99}. 
This approach does not easily extend to the complementary case $H\in (0,\frac 1 2)$, unlike the method suggested in this paper.

\end{rem}

\subsubsection{Fractional state/white observation noise}

To formulate further results, define the limit 
\[
\Lambda^+(t;H_1,H_2) = \lim_{\Im(z)>0, z\to t}\Lambda(z;H_1,H_2), \quad t\in \Real_+, 
\]
which coincides with the expression in \eqref{Lambda} after replacing $z$ with $t\in \Real_+$. 
Let $\theta(t;H_1,H_2)$ be the argument of $\Lambda^+(t;H_1,H_2)$, chosen so that it varies continuously 
with $t\in \Real_+$ and  $\lim_{t\to\infty}\theta(t; H_1,H_2)\in [-\pi, \pi]$. 
This choice defines $\theta(t; H_1,H_2)$ in the unique way, and it is a completely explicit function.

\medskip

The following theorem details the precise error asymptotics in the filtering problem with fractional state process and white noise
observations.

\begin{thm}\label{thm:2}
Let $H:=H_1\in (0,1)\setminus \{\frac 1 2\}$ and $H_2=\frac 1 2$, then
\begin{equation}\label{Pinfty1}
P_\infty\Big(\beta, \frac {\mu}{\sqrt \eps}\Big) = \frac \eps {\mu^2} \left(\frac 1 \pi \int_0^\infty  \theta(t; H, \tfrac 1 2)  dt +\beta  
+
\begin{rcases}   
\begin{dcases}
2\Re (z_0) & \text{if \ } H>\tfrac 1 2 \\
0 & \text{if \ }H<\tfrac 1 2 
\end{dcases}
\end{rcases}
\right),
\end{equation} 
where $z_0$ is the unique zero of $\Lambda(z; H, \frac 1 2)$ in the first quadrant.
Consequently, 
\begin{equation}\label{Peps0}
P_T\Big(\beta, \frac {\mu}{\sqrt \eps}\Big) \asymp  \frac{\kappa(H)^{\frac 1{2H+1}}}{\sin \frac \pi {2H+1}} (\eps/\mu^2)^{\frac{2H}{2H+1}}, \quad \text as \ \eps\to 0.
\end{equation} 
\end{thm}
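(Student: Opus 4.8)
The plan is to exploit the simplification that occurs at $H_2=\tfrac12$. Since then $K_V(r,s)=r\wedge s$, the operator $\frac{\partial}{\partial s}\int_0^T g_T(r)\partial_r K_V(r,s)\,dr$ reduces to $g_T(s)$, so \eqref{maineq} becomes the Fredholm equation of the second kind
$$g_T(s)+\frac{\mu^2}{\eps}\int_0^T K_X(r,s)g_T(r)\,dr=\frac{\mu}{\sqrt\eps}K_X(s,T),\qquad s\in[0,T],$$
while \eqref{PTfun} collapses to $P_T=\frac{\sqrt\eps}{\mu}\,g_T(T)$. Thus everything reduces to computing $\lim_{T\to\infty}g_T(T)$. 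I would record that the state process is a fractional Ornstein--Uhlenbeck process, so its stationary spectral density is $\kappa(H)|\lambda|^{1-2H}/(\lambda^2+\beta^2)$, namely the fractional-noise density $\kappa(H)|\lambda|^{1-2H}$ shaped by the first-order filter $1/(i\lambda-\beta)$.

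Next I would pass to the frequency domain, where the symbol of the left-hand operator is $1+\frac{\mu^2}{\eps}\kappa(H)|\lambda|^{1-2H}/(\lambda^2+\beta^2)$; its numerator is the real symbol $\Phi(\lambda)=\lambda^2+\beta^2+\frac{\mu^2}{\eps}\kappa(H)|\lambda|^{1-2H}$, which is precisely the structural function rotated to the imaginary axis, $\Phi(\eta)=-\Lambda(i\eta;H,\tfrac12)$, so the zeros of $\Lambda$ in the right half-plane correspond to the complex roots of $\Phi$. I would extract $\lim_{T\to\infty}g_T(T)$ from a Wiener--Hopf/Krein-type factorization built from the Cauchy integral of $\log\Lambda^+=\log|\Lambda^+|+i\theta$ along $\Real_+$. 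Three additive contributions should emerge: the phase $\theta$ yields $\frac1\pi\int_0^\infty\theta(t;H,\tfrac12)\,dt$; the denominator $\lambda^2+\beta^2$, carrying the drift, yields the additive $+\beta$; and for $H>\tfrac12$ the symbol carries the conjugate pair of zeros $z_0,\overline{z_0}$, which must be removed before factorizing and then reinstated, contributing $z_0+\overline{z_0}=2\Re(z_0)$. Multiplying the sum by $\tfrac\eps{\mu^2}$ gives \eqref{Pinfty1}. As a check, in the confluent limit $H\to\tfrac12$ one has $\theta\to-\pi\,\one{t<t_0}$, so the integral tends to $-t_0$ while $2\Re(z_0)\to2t_0$, recovering the Kalman--Bucy value $\tfrac\eps{\mu^2}(\beta+t_0)$ of \eqref{PT}.

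For \eqref{Peps0} I would combine Theorem~\ref{thm:main}(2), which gives $P_T\asymp\eps^{\nu}P_\infty(0,\mu)$ with $\nu=\frac{H}{1+H-\frac12}=\frac{2H}{2H+1}$, with the $\beta=0$ case of \eqref{Pinfty1}. At $\beta=0$ the structural function is scale-covariant: setting $z=c\,w$ with $c=(\kappa(H)\mu^2/\eps)^{1/(2H+1)}$ turns $\Lambda(z;H,\tfrac12)$ into $c^2\big(w^2-(w/i)^{1-2H}\big)$, so $\theta$ and $z_0$ both rescale by $c$ and \eqref{Pinfty1} becomes $(\eps/\mu^2)^{2H/(2H+1)}\kappa(H)^{1/(2H+1)}\,C$ with the dimensionless constant $C=\frac1\pi\int_0^\infty\widetilde\theta(u)\,du+2\Re(w_0)\one{H>\frac12}$, where $\widetilde\theta=\arg\big(w^2-(w/i)^{1-2H}\big)$ and $w_0$ is its first-quadrant zero. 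It then remains to show $C=1/\sin\frac{\pi}{2H+1}$; here $w_0=\exp\big(i(\tfrac\pi2-\tfrac{\pi}{2H+1})\big)$, so $2\Re(w_0)=2\sin\frac{\pi}{2H+1}$, and the phase integral is evaluated by a contour/residue computation of Beta-integral type.

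The main obstacle is the factorization of the non-rational symbol $\Lambda$: it carries a branch cut from $(z/i)^{1-2H}$ and, for $H>\tfrac12$, the extra pair of zeros. Proving that $g_T(T)$ converges and that its limit splits into exactly the three pieces above --- in particular the correct index bookkeeping that produces the indicator $\one{H>\frac12}$ and the factor $2$ from the conjugate zeros, together with the convergence of $\int_0^\infty\theta\,dt$ (whose integrand decays like $t^{-1-2H}$ at infinity) --- is where the genuine difficulty lies. The explicit evaluation of the constant $C$ is then a routine, if delicate, residue calculation.
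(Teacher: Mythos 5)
Your reduction of \eqref{maineq} at $H_2=\tfrac12$ to a second-kind Fredholm equation with $P_T=\frac{\sqrt\eps}{\mu}\,g_T(T)$ is correct, your three-term bookkeeping is consistent with \eqref{Pinfty1}, and your scaling computation for \eqref{Peps0} (including $2\Re(w_0)=2\sin\frac{\pi}{2H+1}$ and the trigonometric identity giving $C=1/\sin\frac{\pi}{2H+1}$) checks out against the paper's closed forms for $z_0$ and $b_0$. The gap is in the proposed derivation of \eqref{Pinfty1}. The kernel $K_X(s,r)$ is of convolution type with spectral density $\kappa(H)|\lambda|^{1-2H}/(\lambda^2+\beta^2)$ only when $\beta<0$; for $\beta\ge 0$ the process started from $X_0=0$ is not stationary, there is no symbol to factorize, and a Wiener--Hopf treatment of the untransformed equation does not get off the ground. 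Worse, even granting the symbol, it is even in $\beta$, so no factorization of it can produce the odd-in-$\beta$ term $+\beta$ in \eqref{Pinfty1}; this is precisely the content of Remark \ref{rem:2.6}(a), where the Szeg\H{o}-type formula coming from the stationary spectral density is noted to agree with \eqref{Pinfty1} only for $\beta<0$. The sign of $\beta$ must enter through the non-convolution structure, not through the symbol.

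The paper circumvents this by differentiating \eqref{maineq} and introducing the auxiliary function $\psi$ of \eqref{psidef}, whose one-sided exponential weight $e^{-\beta r}\int_r^T e^{\beta\tau}g(\tau)\,d\tau$ is where $\sign(\beta)$ survives; the Laplace transform \eqref{hatpsi} then leads to a genuine Riemann--Hilbert problem on $\Real_+$. Your Cauchy integral of $\theta$ along $\Real_+$ is indeed the right canonical function, and $b_0=\frac1\pi\int_0^\infty\theta(t)\,dt$ does arrive as the $1/z$ coefficient of its expansion at infinity, cf. \eqref{Xcz}. But the terms $+\beta$ and $2\Re(z_0)\one{H>\frac 1 2}$ are not read off from the factorization itself: they emerge from solving the linear system formed by the asymptotic matching conditions \eqref{kskdeq} --- which is where $\beta$ enters with its sign, together with the unknown boundary constants $\psi(0)$ and $v_{g,1}(T)$ --- and the pole-removal equations \eqref{keqs} at $\pm z_0,\pm\overline z_0$. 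Your proposal is missing both the transformation that makes the problem factorizable for all $\beta$ and the mechanism that pins down these boundary constants; without them the ``index bookkeeping'' you yourself flag as the main difficulty cannot be carried out.
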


\begin{rem}
\

\medskip
\noindent 
a) In the stable case with $\beta<0$, the following alternative expression for the filtering error can be obtained, using the 
spectral theory of stationary processes,
\begin{equation}\label{spectr}
P_\infty\Big(\beta, \frac {\mu}{\sqrt \eps}\Big) = \frac {\eps} {\mu^2} \frac 1 {2\pi} \int_{-\infty}^\infty \log \bigg(1+ \frac{\mu^2}\eps \kappa(H) \frac{|\omega|^{1-2H}}{\beta^2+\omega^2}\bigg) d\omega. 
\end{equation}
The spectral approach is not applicable in the non-stationary case $\beta \ge 0$ and, in fact, 
this formula can be seen to coincide with \eqref{Pinfty1} only for $\beta<0$, but not otherwise. 

\medskip
\noindent
b) The expression in \eqref{Pinfty1} has the right and the left limits at $H=\frac 1 2$, which coincide with the 
classic formula \eqref{PT}. 
While the root of $\Lambda(z; H_1,H_2)$ and the integral in \eqref{Pinfty1} do not seem to admit any closed
form formulae, both are not hard to compute numerically for any concrete values of the parameters. 

\medskip
\noindent
c) Formula \eqref{Peps0} can also be obtained using asymptotic approximation  of the eigenvalues and eigenfunctions 
of the covariance operator of the fractional Ornstein-Uhlenbeck (fOU) process, \cite{ChKM-AiT}. This approximation however 
is not uniform with respect to $T$, and therefore the large time limiting error \eqref{Pinfty1} cannot be derived using 
the same method.  

\end{rem}

\subsubsection{White state/fractional observation noise}

To formulate the results in the complementary case of white state  and fractional observation noises  define 
\begin{equation}\label{Xzdef}
X(z) =  (-z)^{ 3/2- H  }\exp \left(\frac 1 \pi \int_0^\infty \frac{\theta\big(t;\frac 1 2, H \big)}{t-z}dt\right), \quad z\in \mathbb{C}\setminus \Real_+,
\end{equation}
with $H\in (0,1)$.
This function is holomorphic on the cut plane with a jump discontinuity across the positive real semiaxis $\Real_+$. 
In the course of the proof the limits $X^+(|\beta|)$ and $X^-(|\beta|)$ are shown to coincide, and their 
common value will be denoted by $X(|\beta|)$.

\begin{thm}\label{thm:3}
Let $H_1=\frac 1 2$ and $H:=H_2 \in (0,1)\setminus \{\frac 1 2\}$. Then 
\begin{equation}\label{showme_T}
P_\infty \Big(\beta, \frac{\mu}{\sqrt\eps }\Big) =  \frac 1 {2\beta} \left(  \frac {X(-\beta)}{X(\beta)}
\begin{rcases}
\begin{dcases}
\Big|\frac{z_0+\beta}{z_0-\beta}\Big|^2  & \text{if\ } H<\tfrac 1 2\\
1 & \text{if\ }  H>\tfrac 1 2
\end{dcases} 
\end{rcases}
-1\right),
\end{equation} 
where $z_0$ is the zero of $\Lambda(z; \frac 1 2, H)$ in the first quadrant. 
Consequently, 
\begin{equation}\label{showme_eps}
P_\infty \Big(\beta, \frac{\mu}{\sqrt\eps }\Big)\asymp  \frac{\kappa(H)^{\frac 1 {3-2H}}}{\sin \frac{\pi}{3-2H}}\big(\eps/\mu^2\big)^{\frac 1 {3-2H}}, \quad \eps\to 0.
\end{equation} 
\end{thm}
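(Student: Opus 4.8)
The plan is to specialize the general large-time formula obtained in the proof of Theorem \ref{thm:main} to the Markov case $H_1=\tfrac12$. With $W$ a standard Brownian motion, $X$ is an Ornstein--Uhlenbeck process whose covariance $K_X(s,t)$ is exponential in $|s-t|$, with rate set by $\beta$; equivalently its spectral density is the rational $(\beta^2+\omega^2)^{-1}$. Retaining $H_2=H$ leaves the fractional kernel $K_V$ in place, so that letting $T\to\infty$ turns \eqref{maineq} into a Wiener--Hopf equation on $\Real_+$ whose symbol, continued into the upper half-plane, is precisely the structural function $\Lambda(z;\tfrac12,H)$ of \eqref{Lambda}. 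I would first record that $\Lambda(z;\tfrac12,H)$ grows like $z^{3-2H}$ at infinity and, per the discussion preceding the theorem, is zero-free in the open upper half-plane exactly when $H>\tfrac12$, whereas for $H<\tfrac12$ it carries the single first-quadrant root $z_0$ that migrates to the real axis as $H\uparrow\tfrac12$.

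The core of the argument is the Wiener--Hopf factorization of this symbol, and the function $X(z)$ of \eqref{Xzdef} is the candidate factor: by the Sokhotski--Plemelj formula its boundary values across $\Real_+$ have argument $\theta(t;\tfrac12,H)$, while the algebraic prefactor $(-z)^{\frac32-H}$ supplies one half of the growth exponent $3-2H$. I would verify that $X$ is holomorphic and zero-free on $\mathbb{C}\setminus\Real_+$ and realizes the plus-factor, solve the limiting equation for the stationary weight function, and substitute it into the error functional \eqref{PTfun} evaluated at the moving endpoint $s=T$. This produces \eqref{showme_T}: the ratio $X(-\beta)/X(\beta)$ records the interaction of the exponential signal rate with the fractional factorization, the prefactor $\tfrac{1}{2\beta}$ and the subtracted $1$ are inherited from the exponential covariance, and the additional factor $\left|\frac{z_0+\beta}{z_0-\beta}\right|^{2}$ enters exactly for $H<\tfrac12$, when the root $z_0$ must be assigned to the factorization; the apparent singularity at $\beta=0$ is a removable limit.

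For the asymptotics \eqref{showme_eps} I would analyze \eqref{showme_T} as $\lambda:=\mu^2/\eps\to\infty$. Balancing the two terms of $\Lambda(z;\tfrac12,H)$ shows that $z_0$ and the scale on which $\theta(t;\tfrac12,H)$ varies both grow like $\big(\lambda/\kappa(H)\big)^{1/(3-2H)}$, which is the origin of the exponent $\nu=\frac{1}{3-2H}=\frac{H_1}{1+H_1-H_2}$ anticipated in Theorem \ref{thm:main}. Since setting $\beta=0$ renders $\Lambda$ homogeneous up to the additive constant $\lambda$, the rescaled Cauchy integral in $\log X$ becomes explicit and the ratio in \eqref{showme_T} collapses to the stated constant $\kappa(H)^{1/(3-2H)}/\sin\frac{\pi}{3-2H}$; as a check, part~2 of Theorem \ref{thm:main} reduces the leading term to $P_\infty(0,\mu)$, which must reproduce the same value.

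The main difficulty I expect lies in the factorization together with the two interchanged limits. Because the symbol has a genuine branch point at the origin and algebraic growth at infinity, showing that $X(z)$ is the correct factor --- of the right order, zero-free, with matching boundary argument --- and that the finite-$T$ solution of \eqref{maineq} converges to the half-line solution strongly enough to pass the endpoint evaluation in \eqref{PTfun} to the limit is delicate. The most error-prone bookkeeping is the treatment of $z_0$ across the threshold $H=\tfrac12$, where it leaves the open quadrant for the real axis and the indicator in \eqref{showme_T} switches off. Finally, matching the $\eps\to0$ expansion of $P_\infty$ with the $T$-fixed reduction of Theorem \ref{thm:main} requires the two limits to commute, which I would secure through bounds on the factorization that are uniform in $\lambda$.
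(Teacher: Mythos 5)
Your outline identifies the right objects --- the structural function $\Lambda(z;\tfrac12,H)$, its growth of order $3-2H$, the location of its first-quadrant zero for $H<\tfrac12$, and the sectionally holomorphic factor $X(z)$ solving the homogeneous Hilbert problem --- and in that sense it is aligned with the paper, whose method is precisely a Riemann--Hilbert/Wiener--Hopf type factorization. But there are two genuine gaps. First, your framing via a limiting Wiener--Hopf equation on $\Real_+$ with a ``stationary weight function'' is problematic for $\beta\ge 0$: the state process is then non-stationary (explosive for $\beta>0$), $K_X$ grows exponentially, and the half-line equation does not obviously admit a solution to substitute into \eqref{PTfun}. The paper deliberately avoids this (cf.\ Remark \ref{rem:2.6}(a)) by keeping $T$ finite, representing the Laplace transform of $g_T$ through $\Phi_0,\Phi_1$ and the auxiliary integral equations \eqref{pqeq}, \eqref{qqpp}, and extracting the limit of the error functional from the uniform contraction estimates \eqref{asym_est} and \eqref{pjqjest}; the interchange of limits you flag as ``delicate'' is exactly what these estimates are for, and your proposal offers no substitute for them.

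Second, and more importantly, the step ``substitute it into the error functional \eqref{PTfun}\ldots\ This produces \eqref{showme_T}'' is where essentially all of the work lies, and it is asserted rather than performed. To get the closed form one needs: the representation $Q(z)=\frac{\mu_\eps}{z-\beta}\big(X(z)^{-1}-X(\beta)^{-1}\big)$ of \eqref{Qexpr}; the product identities $X(z)X(-z)=-\kappa_\alpha^{-1}\Lambda(z)$ for $H>\tfrac12$ (Lemma \ref{lem:Xz}) and $X(z)X(-z)=-\kappa_\alpha^{-1}\Lambda(z)\big[(z^2-z_0^2)(z^2-\overline z_0^2)\big]^{-1}$ for $H<\tfrac12$ (Lemma \ref{lem:XzXz}, itself proved by a nontrivial contour integration of $\log\widetilde\Lambda$ around its branch curves); and, in the case $H<\tfrac12$, the degree-one polynomial free terms in the Sokhotski--Plemelj representation together with the pole-removal conditions \eqref{cond_poles} at $\pm z_0,\pm\overline z_0$, which is how the constants $k^S_j-k^D_j$ and hence the factor $\big|\tfrac{z_0+\beta}{z_0-\beta}\big|^{2}$ actually arise --- not merely from ``assigning the root to the factorization.'' The remaining integrals $J_0,J_1,J_2$ in \eqref{Is} must then be evaluated in closed form. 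None of these computations is routine, and without them \eqref{showme_T} is not derived. Your plan for \eqref{showme_eps} is closer to the mark, but note the paper obtains it from \eqref{small-noise} via the $\beta\to0+$ limit of $\beta^{-1}\log\big(X(-\beta)/X(\beta)\big)$ (Lemma \ref{lem:limbeta}), which again requires an explicit contour computation rather than a direct homogeneity argument.
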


\begin{rem}
Numerical evaluation of $X(z)$ at $z:=\beta>0$ involves computation of the Cauchy principal value of the integral in \eqref{Xzdef}.
The following identity, proved in Lemmas \ref{lem:Xz} and \ref{lem:XzXz} below,
\[
X(\beta)X(-\beta) = \frac 1 {\kappa(H)}\frac {\mu^2}{\eps} \left(\frac 1{|\beta^2-z_0^2|^2}\right)^{\one{H<\frac 1 2}},
\]
can be more convenient for this purpose.
\end{rem}

\section{Related literature}

\subsection{Integro-differential equations}
To the best of our knowledge integro-differential equations such as \eqref{maineq} do not have a general theory. As 
mentioned above, for the white observation noise $H_2=\frac 1 2$, problem \eqref{maineq} 
reduces to integral equation of the second kind. Such equations have been studied since the pioneering works 
of Fredholm, and their unique solvability in various spaces is very well understood. Nevertheless, even in 
this relatively standard setting, quantifying dependence of the solutions on parameters, such as $T$ and $\eps$ 
in our context, can be a highly nontrivial matter.  
Essentially, the only case in which a complete theory is available is that of Kalman-Bucy, 
when reduction of \eqref{maineq} to the Riccati o.d.e. is possible. This reduction has far reaching implications, way beyond 
the scalar problem considered in this paper. It leads to a complete characterisation of the limit behaviour of the 
optimal error in terms of such notions as controllability and observability (see, e.g., \cite{KS72}).

\subsection{Stationary problem}

The stationary version of the filtering problem for \eqref{maineq} on the semi-infinite time horizon $[0,\infty)$ 
can be solved within the framework of the Kolmogorov-Wiener spectral theory, \cite{Roz77}. 
In some cases it yields closed form formulas for the steady-state  error in the form of
integrals over spectral densities such as \eqref{spectr}.
However this approach is strictly limited to the stable state equation \eqref{Xeq} with $\beta<0$, 
even in the standard Kalman-Bucy problem. In fact, overcoming this difficulty was the main impetus behind 
the state-space approach pioneered in \cite{KB61}.

\subsection{Nonlinear filtering}
Optimal error analysis in the more general, nonlinear filtering problem attracted much attention 
in the more recent past. Questions of existence and uniqueness of the large time limit of the filtering error
was addressed first in \cite{Ku71} and continued to generate much research over the years;  surveys of different approaches can be found in \cite{A11}, \cite{Bu11}, \cite{ChLvH11},  \cite{KV11}, \cite{S11}.   
The steady state error is never explicit beyond the linear problem, and consequently various techniques of computing its 
lower bounds have been suggested, see  \cite{Z11}. Exact small noise error asymptotics was derived for a number of 
models, including diffusions  \cite{P86}, \cite{ZD88}, \cite{PZ05} and finite state chains \cite{KhZ96}, \cite{SBB00}. 
Let us stress, however, that all these results are concerned exclusively with the Markov case and therefore do not apply to the 
filtering models with fractional noises.

\subsection{Filtering with fractional noises}
Filtering in systems driven by the fractional Brownian motion have been addressed by many authors, including 
\cite{LB98}, \cite{CD99}, \cite{KKA98b}, \cite{KLR00b}, \cite{XX05}, \cite{Dun11}, both in linear and nonlinear settings. 
However, most of the literature is concerned with derivation of the filtering equation, 
rather than evaluation of the optimal error, which remained mainly elusive so far. 

\subsection{Contribution of this paper}
The contribution of this paper is twofold. From the perspective of stochastic filtering theory, 
it suggests a method of asymptotic error analysis in a non-Markov system with fractional type driving noises. 
Existence of the steady-state error beyond the Markov setting remained largely unexplored, and this is 
probably one of the first systematic takes on the subject. 
Besides a qualitative asymptotic picture as in Theorem \ref{thm:main}, our method yields closed form expressions 
for the filtering error limits in several cases of interest, such as  Theorems \ref{thm:1}, \ref{thm:2} 
and \ref{thm:3}.

Another contribution is on the more technical side, and it consists of constructing a solution to equation
\eqref{maineq}, which was previously known to be solvable in some special cases, see e.g. \cite{CCK}.
Our approach is inspired by a technique, introduced in the mathematical physics literature, \cite{Ukai}, \cite{Gibbs69}, 
and its recent applications to fractional stochastic processes, \cite{ChK}, \cite{ChKM20}, 
\cite{N19}. Previously it was used in eigenproblems, i.e., homogeneous integral equations of the second kind,
with the objective of approximating the sequence of its solutions. 
The {\em non-homogeneous} problem under consideration in this paper requires a complete revision of this technique 
at least from two standpoints. First, equation \eqref{maineq} has the unique solution, and this time the goal 
is its asymptotic analysis with respect to parameters. Moreover, the main object of interest is not the solution itself,
but its particular functional \eqref{PTfun}.

\section{Preliminaries}

\subsection{Notations, conventions and tools}
The proof uses some basic tools from complex analysis. Unless otherwise stated, the standard range 
$z\in (-\pi,\pi]$ will be used for  principal branches of the common multivalued functions.
We will frequently encounter functions, which are holomorphic on the cut planes $\mathbb{C}\setminus \Real$ or 
$\mathbb{C}\setminus \Real_+$, with a finite jump discontinuity across the cut.  For such a {\em sectionally holomorphic} 
function $\Psi(z)$,  the limits across the real line will be denoted by 
\[
\begin{aligned}
&
\Psi^+ (t) := \lim_{ \Im(z)>0, z\to t} \Psi(z),
\\
&
\Psi^- (t) := \lim_{\Im(z)<0, z\to t} \Psi(z),
\end{aligned}\qquad t\in \Real. 
\]

Often we will need to solve the Hilbert problem of finding a function $\Psi(z)$, 
which is sectionally holomorphic on $\mathbb{C}\setminus \Real_+$ and satisfies the boundary condition 
\[
\Psi^+(t) -\Psi^-(t) = \phi(t), \quad t\in \Real_+,
\]
for a given function $\phi(\cdot)$. When $\phi(\cdot)$ is H\"older on $\Real_+\cup \{\infty\}$, 
by the Sokhotski-Plemelj theorem, the unique solution  to this problem has the form 
\[
\Psi(z) = \frac 1 {2\pi i} \int_0^\infty \frac{\phi(t)}{t-z}d t + P(z),
\] 
where $P(z)$ is a polynomial of a finite degree, whose growth  as $z\to\infty$ matches that of $\Psi(z)$. 
A comprehensive account of such boundary value problems can be found in, e.g.,  monograph \cite{Gahov}.
  
When dependence on parameters is important, they will be added to the notations: 
for example, $g(x)$, $g_T(x)$ or $g(x; \eps,T)$ will denote the same function, depending on the context. 
It will also be convenient to use $\mu_\eps := \mu/\sqrt{\eps}$ and reparameterize the problem by $\alpha_1 := 2-2H_1$ and 
$\alpha_2 := 2-2H_2$, which take values in $(0,2)$.
Finally, we will write  $r(u)\asymp q(u)$ when $r(u) = q(u) (1+o(1))$ for both $u:=T\to\infty$  and  $u:=\eps\to 0$. 

\subsection{Proof preview}\label{sec:preview} 

In essence, the proof amounts to constructing the solution to \eqref{maineq} in a form, more amenable to asymptotic analysis. 
This is done by exploiting the structure of the Laplace transform of its solution, 
\begin{equation}\label{gTz}
\widehat g (z) = \int_0^T e^{-zx} g (x)dx, \quad z\in \mathbb{C},
\end{equation}
revealed by the representation formula, which is derived in Lemma \ref{lem:4.1} below,
\begin{equation}\label{gTz_expr}
\widehat g(z)  = -\frac 1 {\Lambda(z)}
\bigg(
 (z+\beta) \Big(\Phi_0(z)+e^{-zT} \Phi_1(-z)\Big) +  \mu_\eps^2  N_{\alpha_1}(z) 
\Big(\psi(0)+\frac {1}{\mu_\eps}e^{-zT}\Big)
\bigg).
\end{equation}
This expression involves the following elements.

\medskip 

\begin{enumerate}
\addtolength{\itemsep}{0.7\baselineskip}
\renewcommand{\theenumi}{\roman{enumi}}

\item  The complex function 
\begin{equation}\label{Nalpha}
N_\alpha(z) =  \kappa_\alpha \begin{cases}
\big(z/i\big)^{\alpha-1}, & \Im\{z\}>0,\\
\big(-z/i\big)^{\alpha-1}, & \Im\{z\} <0,
\end{cases}
\end{equation}
where, cf. \eqref{kappa},
$$
\kappa_\alpha = \kappa\big(1-\tfrac\alpha 2\big) = \frac{(1-\alpha)(1-\alpha/2)}{\Gamma(\alpha)}\frac \pi {\cos \frac \alpha 2 \pi}>0, \quad \alpha \in (0,2)\setminus \{1\}.
$$
This function is sectionally holomorphic on $\mathbb{C}\setminus \Real$, and its limits across the real
line satisfy the obvious symmetries 
\begin{equation}\label{Nsym}
N_\alpha^+(t) = N_\alpha^-(-t) \quad \text{and}\quad   N_\alpha^+(t)  = \overline{N_\alpha^-(t)}.
\end{equation}

\item\label{ii} 
The {\em structural } function of the problem, cf. \eqref{Lambda},
\begin{equation}\label{Lambdaz}
\Lambda(z) = (z^2-\beta^2) N_{\alpha_2}(z) - \mu_\eps^2 N_{\alpha_1}(z),
\end{equation}
which inherits the discontinuity of  $N_{\alpha_j}(z)$'s along the real line and is holomorphic elsewhere.
It does not vanish on the cut plane when $\alpha_1>\alpha_2$ and has four simple complex zeros, placed symmetrically 
in each quadrant, when $\alpha_1<\alpha_2$.  In the case $\alpha_1=\alpha_2$, it has two purely real zeros. 
Configuration of zeros has a determining effect on the solution.

\item\label{iv}  
Functions $\Phi_0(z)$ and $\Phi_1(z)$ are sectionally holomorphic on $\mathbb{C}\setminus \Real_+$. 
They are defined explicitly as certain functionals of $g(\cdot)$, involving the Cauchy integrals, but their particular form
is inessential, except for the growth  \eqref{apriori_est} as $z\to 0$ and $z\to\infty$.

\item  The quantity $\psi(0)$, also determined by $g(\cdot)$, is 
constant with respect to $z$, see \eqref{psidef} below. 

\end{enumerate}

\medskip

The crucial feature of representation \eqref{gTz_expr} is its singularities.
Since the integration in \eqref{gTz} is carried out over a finite interval, the Laplace transform $\widehat g(z)$ 
is an entire function, and thus all singularities in \eqref{gTz_expr} must be removable.
This includes  discontinuity across the real line, whose removal yields equations \eqref{eq:6.11} below. These 
equations bind together the limits $\Phi_0^\pm(t)$ and $\Phi_1^\pm (t)$ at all $t\in \Real_+$ and
can be viewed as boundary conditions on $\Real_+$ for the functions $\Phi_0(z)$ and $\Phi_1(z)$, 
which are holomorphic elsewhere.

Finding all such functions satisfying the particular growth estimates, mentioned in \eqref{iv}, is known as the Hilbert boundary 
value problem. In our case, all its solutions can be expressed in terms of auxiliary integral equations  of the general form 
\begin{equation}\label{pjt}
p(t) =  \big(A_{\eps, T} \,  p\big)(t) + f(t), \quad t\in \Real_+,
\end{equation}
where $A_{\eps,T}$ is an integral operator with an explicit kernel, and $f(\cdot)$ is either a specific function  
or a finite degree polynomial.

The functions $\Phi_0(z)$ and $\Phi_1(z)$ can be expressed in terms  of  solutions to these equations and several 
unknown constants. The number of these constants is determined by the configuration of zeros of $\Lambda(z)$ as 
mentioned in \eqref{ii}. Substitution of the expressions for $\Phi_0(z)$ and $\Phi_1(z)$ into \eqref{gTz_expr} 
yields an expression for the Laplace transform $\widehat g(z)$, which is therefore determined by solutions to \eqref{pjt} and 
the constants. The solution to \eqref{maineq} can then be found by inverting the Laplace transform. 
Consequently, the filtering error, determined by the functional \eqref{PTfun}, can also be expressed in terms of 
solutions to equations \eqref{pjt}. 

For example,  when $\alpha_1>\alpha_2$, there are no zeros, and as it turns out, the only unknown constant
in this case is $\psi(0)$ from \eqref{gTz_expr}. It can be found using the a priori condition 
\begin{equation}\label{cond0}
\left(
\frac{\partial}{\partial s} \int_0^T   g(r)\frac{\partial}{\partial r} K_V(r,s)dr
\right)_{\displaystyle\big|s:=0}=0,
\end{equation}
implied by \eqref{maineq} as  $K_X(0,t)=0$ for all $t\in [0,T]$.  
When $\alpha_1\le \alpha_2$, the  function $\Lambda(z)$ has several zeros, which appear in \eqref{gTz_expr} as simple poles. 
Removing these poles leads to a system of linear algebraic equations, which along with \eqref{cond0},  determine all the unknown 
coefficients.

At the first glance, so constructed representation does not appear any simpler than the original problem itself, 
since equations \eqref{pjt} cannot be solved explicitly. 
Remarkably though, a significant simplification is possible 
due to the properties of operator $A_{\eps,T}$, which force the first term in the right hand side  of \eqref{pjt} to 
vanish asymptotically as either $T\to \infty$ or $\eps\to 0$. Consequently, otherwise non-explicit function $p(t)$ 
can be approximated asymptotically by the forcing function $f(t)$. This is where the assertions of Theorem 
\ref{thm:main} come from. In fact, the limit $P_\infty\big(\beta, \frac\mu{\sqrt\eps}\big)$ can be found in a closed, though 
rather complicated form.

Further simplifications are possible in the special cases, when either of the functions 
$N_{\alpha_1}(z)$ or $N_{\alpha_2}(z)$ in formula \eqref{gTz_expr} degenerate  to 1, as in 
Theorem \ref{thm:2} and Theorem \ref{thm:3}, or when they remain non-degenerate, but coincide as 
in Theorem \ref{thm:1}. The ultimate expressions in all three cases are obtained by somewhat different calculations, 
which are detailed in Sections \ref{sec:case1}-\ref{sec:case3}.

\section{Proof of Theorem \ref{thm:main}}
In the notations introduced above, the covariance function of the fBm in \eqref{Yeq} has the form, cf. \eqref{KV}, 
\begin{equation}\label{Kst}
K_V(s,t)  = \frac 1 2\Big(s^{2-\alpha_2} + t^{2-\alpha_2} - |s-t|^{2-\alpha_2}\Big),
\end{equation}
with $\alpha_2\in (0,2)$. 
The covariance function of the fractional Ornstein-Uhlenbeck state process $X$, generated by \eqref{Xeq}, is 
\begin{equation}\label{KX}
K_X(s,t) = \int_0^s e^{\beta(s-u)} \frac{\partial}{\partial u}\int_0^t e^{\beta(t-v)} \frac \partial {\partial v}K_W(u,v) dvdu,
\end{equation}
where $K_W(s,t)$ is the  kernel in \eqref{Kst} with $\alpha_2$ replaced by $\alpha_1$.

\subsection{The Laplace transform} 

The following lemma details the structure of the Laplace transform of the solution to the main filtering equation 
and its relation to the filtering error.

\begin{lem}\label{lem:4.1}
Let $g(\cdot)$ solve equation \eqref{maineq} with  $K_V(s,t)$ and $K_X(s,t)$ as above.

\medskip 
\noindent 
1. 
The Laplace transform $\widehat g(z)$, defined in \eqref{gTz},  satisfies representation \eqref{gTz_expr} where 
\begin{equation}\label{psidef}
\psi(r) =  e^{-\beta r} \int_r^T e^{\beta \tau}g(\tau)d\tau  -\frac 1{\mu_\eps} e^{\beta(T-r)},  
\end{equation}
and the functions $\Phi_0(z)$ and $\Phi_1(z)$ are sectionally holomorphic on $\mathbb{C}\setminus\Real_+$,
satisfying 
\begin{equation}\label{apriori_est}
|\Phi_1(z)|\vee |\Phi_0(z)|  =   
\begin{cases}
O\big(z^{(\alpha_1\wedge \alpha_2-1)\wedge 0}\big), & z\to 0, \\
O\big(z^{(\alpha_2-1)\vee  0}\big), & z\to\infty.
\end{cases}
\end{equation}

\medskip 
\noindent 
2.  The following condition holds 
\begin{equation}\label{cond1}
\lim_{\Re(z)\to\infty} z \bigg(
N_{\alpha_2}(z) \widehat g(z)
- \frac 1 {2\pi i} \int_0^\infty\frac 1{t-z} \big(N_{\alpha_2}^+(t)-N_{\alpha_2}^-(t)\big)
\widehat g(t)dt
\bigg)=0,
\end{equation}
and the filtering error \eqref{PTfun} is given by the limit
\begin{multline}\label{PTfla}
P_T =   \frac 1 {\mu_\eps} \lim_{\Re(z)\to\infty} z \bigg(N_{\alpha_2}(-z) e^{-zT} \widehat g(-z)  \\
  -\frac 1 {2\pi i}  \int_0^\infty \frac 1 {t-z} \big(N_{\alpha_2}^+(t)   - N_{\alpha_2}^-(t) \big) e^{-tT}\widehat g(-t) dt\bigg).
\end{multline}
\end{lem}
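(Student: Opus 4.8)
The plan is to apply the Laplace transform \eqref{gTz} directly to the integro-differential equation \eqref{maineq}, treating its three terms separately and tracking every boundary contribution generated by the finite horizon $[0,T]$. First I would reduce the differential-convolution operator in the first term of \eqref{maineq} to an explicit fractional kernel: differentiating under the integral sign and using the explicit form \eqref{Kst}, the map $s\mapsto\partial_s\int_0^T g(r)\,\partial_r K_V(r,s)\,dr$ becomes, up to the constant $(1-\alpha_2/2)(1-\alpha_2)$, the finite-part convolution of $g$ against $|r-s|^{-\alpha_2}$. Its two-sided symbol on the line is precisely a multiple of $N_{\alpha_2}(z)$ from \eqref{Nalpha}, so its causal Laplace transform equals $N_{\alpha_2}(z)\widehat g(z)$ corrected by Cauchy-type integrals that encode the truncation to $[0,T]$ and the jump of the symbol across $\Real$; these corrections are what will eventually be packaged into $\Phi_0(z)$ and the factor $e^{-zT}\Phi_1(-z)$.

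The second term and the right-hand side of \eqref{maineq} both involve $K_X$, and here the plan is to exploit the fractional Ornstein--Uhlenbeck structure \eqref{KX}, which represents $K_X$ as the covariance $K_W$ acted on by the causal exponential filter $\int_0^{\cdot}e^{\beta(\cdot-u)}\,du$ in each of its two arguments. After the Laplace transform in $s$, each such filter contributes a first-order rational factor $(z\mp\beta)^{-1}$, so the transforms of both the second term and of $K_X(\cdot,T)$ carry the common denominator $z^2-\beta^2$; the backward filtering in the $r$-integration against $g$ is exactly encoded by the function $\psi$ of \eqref{psidef}, which solves $\psi'+\beta\psi=-g$ with $\psi(T)=-1/\mu_\eps$. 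Multiplying the transformed equation through by $z^2-\beta^2$ to clear these denominators makes the first term carry $(z^2-\beta^2)N_{\alpha_2}(z)\widehat g(z)$ and the second term carry $-\mu_\eps^2 N_{\alpha_1}(z)\widehat g(z)$, whose sum is exactly $\Lambda(z)\widehat g(z)$ with $\Lambda$ as in \eqref{Lambdaz}; dividing by $\Lambda(z)$ yields \eqref{gTz_expr}. The endpoint contributions at $0$ and $T$ are collected into $\Phi_0(z)$, $e^{-zT}\Phi_1(-z)$ and the constant $\psi(0)$. Since $\Phi_0$ and $\Phi_1$ are Cauchy integrals over $\Real_+$ of explicit functionals of $g$, they are sectionally holomorphic on $\mathbb{C}\setminus\Real_+$, and the bounds \eqref{apriori_est} follow by estimating these integrals, the exponents $\alpha_j-1$ as $z\to0$ reflecting the power-law symbols $N_{\alpha_j}$ and the exponent $(\alpha_2-1)\vee0$ as $z\to\infty$ reflecting the worst-case growth of $N_{\alpha_2}$. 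This establishes part (a).

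For part (b) the key point is that, because $g$ is supported on the finite interval $[0,T]$, the transform $\widehat g(z)$ is entire. Consequently $N_{\alpha_2}(z)\widehat g(z)$ carries only the jump inherited from $N_{\alpha_2}$ across $\Real_+$, namely $(N_{\alpha_2}^+(t)-N_{\alpha_2}^-(t))\widehat g(t)$; subtracting the Cauchy integral of that jump, which is precisely the bracket in \eqref{cond1}, renders the expression holomorphic across $\Real_+$ and equal to the causal Laplace transform of $s\mapsto\partial_s\int_0^T g(r)\,\partial_r K_V(r,s)\,dr$. The initial-value (Abelian) relation $\lim_{\Re(z)\to\infty}z\,\widehat f(z)=f(0^+)$ then identifies $\lim_{\Re(z)\to\infty}z\,(\cdot)$ with the value of this function at the left endpoint $s=0$; evaluating \eqref{maineq} at $s=0$ and using $K_X(0,t)=0$ gives exactly \eqref{cond0}, so the limit vanishes, which is \eqref{cond1}. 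The error formula \eqref{PTfla} is the mirror statement at the right endpoint: writing $e^{-zT}\widehat g(-z)=\int_0^T e^{-z(T-x)}g(x)\,dx$ reverses the interval so that the large-$\Re(z)$ behavior localizes at $x=T$, and the same jump-removing subtraction followed by $\lim_{\Re(z)\to\infty}z\,(\cdot)$ extracts the endpoint value at $s=T$ of the first term of \eqref{maineq}, which by \eqref{PTfun} equals $\mu_\eps P_T$.

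The main obstacle I anticipate is the Laplace-domain bookkeeping in the first two steps. The fractional kernels $|r-s|^{-\alpha_j}$ are strongly singular for $\alpha_j>1$ and must be handled as finite-part integrals, so differentiating under the integral, isolating the endpoint distributions at $0$ and $T$, and correctly combining the two fractional Ornstein--Uhlenbeck filters all require care to land the symbols $N_{\alpha_j}(z)$ together with the factors $z\pm\beta$ cleanly, without spurious terms. Verifying that the collected boundary data genuinely assemble into sectionally holomorphic functions $\Phi_0,\Phi_1$ obeying the precise growth estimates \eqref{apriori_est}, rather than merely formal Cauchy integrals, is the technical heart of the lemma.
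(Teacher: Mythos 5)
Your proposal is correct and follows essentially the same route as the paper: reduce the equation to a relation between the fractional convolution operators $v_{g,\alpha_2}$ and $v_{\psi,\alpha_1}$ (with $\psi$ solving $\psi'+\beta\psi=-g$, $\psi(T)=-1/\mu_\eps$), Laplace transform using the symbol $N_\alpha(z)$ together with truncation corrections that become $\Phi_0$ and $e^{-zT}\Phi_1(-z)$, clear the $(z\mp\beta)$ factors to produce $\Lambda(z)$, and then obtain (b) by removing the jump of $N_{\alpha_2}(z)\widehat g(z)$ across $\Real_+$ via Sokhotski--Plemelj and reading off the endpoint values $v_{g,\alpha_2}(0)=0$ and $v_{g,\alpha_2}(T)=\mu_\eps P_T$ from the initial-value theorem. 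The only cosmetic difference is that the paper eliminates the $K_X$-terms by differentiating \eqref{maineq} once in $s$ and substituting the equation back into itself before transforming, whereas you perform the equivalent cancellation in the Laplace domain; the technical core you correctly flag (the lemma giving $\widehat v_{f,\alpha}(z)=N_\alpha(z)\widehat f(z)+e^{-zT}\Psi_{f,1}(-z)+\Psi_{f,0}(z)$) is carried out in the paper via the subordination $|x-y|^{1-\alpha}\sign(x-y)=\Gamma(\alpha)^{-1}\int_0^\infty t^{\alpha-1}(x-y)e^{-t|x-y|}dt$ and explicit second-order ODEs for the resulting auxiliary functions.
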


\medskip

The proof of this lemma uses the operator       
\begin{equation}\label{vfa}
v_{f, \alpha} (s) := \frac {\partial}{\partial s} \int_0^T (1-\tfrac {\alpha} 2)|s-r|^{1-\alpha} \sign(s-r)f(r)dr,
\end{equation} 
which acts on sufficiently regular functions $f$.
Since for $\alpha \in (0,2)$
\[
|x-y|^{1-\alpha} \sign(x-y)= \frac 1 {\Gamma(\alpha)} \int_0^\infty t^{\alpha-1} (x-y)e^{-t|x-y|}dt,   
\]
we can  write 
\begin{equation}\label{predst}
v_{f, \alpha} (x)  = \frac 1 {c_\alpha}\frac {d}{dx} \int_0^\infty t^{\alpha-1} u_{f} (x,t) dt,
\end{equation}
where $c_\alpha:= \frac {\Gamma(\alpha)} {1-\tfrac {\alpha} 2} $ and 
\[
u_{f}(x,t) := \int_0^T  (x-y)e^{-t|x-y|} f(y)dy.
\]
In addition, let us define  another auxiliary function
\[
w_f (x,t) := \int_0^T  e^{-t|x-y|} f(y)dy.
\]

\begin{lem} 
The Laplace transform of \eqref{vfa} satisfies 
\begin{equation}\label{eq:6.3}
  \widehat v_{f, \alpha} (z)  =  N_\alpha(z) \widehat f(z)+ e^{-zT} \Psi_{f,1}(-z) +  \Psi_{f,0}(z)
\end{equation}
where  $N_\alpha(z)$ is defined in \eqref{Nalpha} and 
\begin{equation}\label{Psidefn}
\begin{aligned}
\Psi_{f,1}(z):= &\phantom{+\ } \frac 1 {c_\alpha} \int_0^\infty    \frac{t^\alpha  }{t-z}  u_{f} (T,t)  dt + \frac 1 {c_\alpha} z\int_0^\infty   \frac{t^{\alpha-1}}{(t-z)^2}w_f(T,t)dt,\\
\Psi_{f,0}(z):= & -\frac 1 {c_\alpha}\int_0^\infty  \frac {t^\alpha }{t-z}  u_f(0,t) dt + \frac 1 {c_\alpha} z \int_0^\infty  \frac {t^{\alpha-1} } {(t-z)^2} w_f(0,t) dt.
\end{aligned}
\end{equation}

\end{lem}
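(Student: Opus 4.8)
The plan is to take the Laplace transform of the representation \eqref{predst} directly, moving the $x$-derivative by integration by parts and then evaluating the resulting double integrals in closed form. Writing $G(x) := \int_0^\infty t^{\alpha-1} u_f(x,t)\,dt$ so that $v_{f,\alpha} = G'/c_\alpha$, integration by parts over the finite interval $[0,T]$ gives
$$
\widehat v_{f,\alpha}(z) = \frac{1}{c_\alpha}\Big(e^{-zT}G(T) - G(0) + z\,\widehat G(z)\Big),
$$
where $\widehat G(z) = \int_0^\infty t^{\alpha-1}\big(\int_0^T e^{-zx}u_f(x,t)\,dx\big)\,dt$ after an application of Fubini. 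The inner $x$-transform of $u_f(x,t)$ is computed by splitting the defining integral at $x=y$ according to the sign of $x-y$ and using the elementary primitive $\int_0^L s e^{-as}\,ds = a^{-2}\big(1-e^{-aL}(1+aL)\big)$; for each fixed $y$ this produces a bracket built from $(t+z)^{-2}$ and $(t-z)^{-2}$, corrected by exponential endpoint terms. I would carry out the argument for $\Im(z)>0$ and obtain the case $\Im(z)<0$ by the conjugate branch, consistent with \eqref{Nsym}.

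Next I would split this bracket into its four pieces and integrate them separately against $e^{-zy}f(y)\,dy$ and $t^{\alpha-1}\,dt$. The two $y$-independent pieces $(t+z)^{-2}$ and $-(t-z)^{-2}$ factor out $\int_0^T e^{-zy}f(y)\,dy = \widehat f(z)$, so their contribution to $z\widehat G/c_\alpha$ is $\widehat f(z)$ times $\frac{z}{c_\alpha}\int_0^\infty t^{\alpha-1}\big[(t+z)^{-2}-(t-z)^{-2}\big]\,dt$. The crux is to identify this scalar with $N_\alpha(z)$ of \eqref{Nalpha}. For this I would invoke the Beta integral $\int_0^\infty t^{\alpha-1}(t+a)^{-2}\,dt = a^{\alpha-2}\Gamma(\alpha)\Gamma(2-\alpha)$, valid for $0<\alpha<2$ and $a$ off the negative axis, apply it with $a=z$ and $a=-z$, and then collapse the resulting trigonometric factor using the reflection formula $\Gamma(\alpha)\Gamma(1-\alpha)=\pi/\sin(\pi\alpha)$ together with $2\sin\tfrac{\pi\alpha}{2}\cos\tfrac{\pi\alpha}{2}=\sin\pi\alpha$. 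A direct check shows the constant reduces exactly to $\kappa_\alpha$ and the power to $(z/i)^{\alpha-1}$, yielding the leading term $N_\alpha(z)\widehat f(z)$.

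The remaining work is bookkeeping of the two exponential pieces together with the boundary terms $e^{-zT}G(T)$ and $-G(0)$. For the piece carrying $e^{-(z+t)(T-y)}$ I would use $e^{-zy}e^{-(z+t)(T-y)} = e^{-zT}e^{-t(T-y)}$ to reassemble the $y$-integrals into $w_f(T,t)$ and $u_f(T,t)$; adding $e^{-zT}G(T)/c_\alpha$ and applying $t^{\alpha-1}\big(1-\tfrac{z}{z+t}\big)=\tfrac{t^{\alpha}}{z+t}$ collapses the $u_f(T,t)$ contributions into $\frac{1}{c_\alpha}\int_0^\infty \frac{t^\alpha}{t+z}u_f(T,t)\,dt$, which together with the $w_f(T,t)$ term is exactly $e^{-zT}\Psi_{f,1}(-z)$ from \eqref{Psidefn}. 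Symmetrically, for the piece carrying $e^{-(t-z)y}$ I would use $e^{-zy}e^{-(t-z)y}=e^{-ty}$ and the relation $\int_0^T y\,e^{-ty}f(y)\,dy = -u_f(0,t)$ to express everything through $u_f(0,t)$ and $w_f(0,t)$; combining with $-G(0)/c_\alpha$ and absorbing $t^{\alpha-1}\cdot\tfrac{t}{t-z}=\tfrac{t^\alpha}{t-z}$ produces exactly $\Psi_{f,0}(z)$. Assembling the three groups gives \eqref{eq:6.3}.

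Finally I would record the side-conditions: the interchange of integration orders and of the $t$-integral with $d/dx$ are justified for sufficiently regular integrable $f$ by absolute convergence, since the bulk $t$-integral converges for $\alpha\in(0,2)\setminus\{1\}$, while the endpoint integrals converge from the $O(t^{-2})$ decay of $u_f(\cdot,t)$ and $w_f(\cdot,t)$ as $t\to\infty$ and the integrability of $t^{\alpha-1}$ at the origin. I expect the main obstacle to be the crux identity of the second paragraph: pinning down the branch of the power and the exact constant $\kappa_\alpha$ requires careful tracking of $\arg z$ for $z$ in the upper half-plane through the Beta-integral evaluation, which is precisely where the reflection and duplication formulae must combine.
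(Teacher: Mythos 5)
Your argument is correct, and it reaches \eqref{eq:6.3} by a genuinely different route from the paper. The paper does not compute the inner transform $\widehat u_f(z,t)$ by direct integration: it first derives the second-order system $u_f''=2w_f'+t^2u_f$, $w_f''=-2tf+t^2w_f$ in the $x$-variable together with the endpoint relations \eqref{ubnd}--\eqref{wbnd}, and then obtains $\widehat u_f(z,t)$ by Laplace-transforming these ODEs; the coefficient of $\widehat f(z)$ comes out as $-\tfrac{4z^2}{c_\alpha}\int_0^\infty t^\alpha(t^2-z^2)^{-2}\,dt$ and is reduced to \eqref{Nalpha} by contour integration. You instead split the defining integral of $u_f(x,t)$ at $x=y$, integrate the elementary primitive $\int_0^L se^{-as}ds$, and evaluate the resulting scalar via the Beta integral $\int_0^\infty t^{\alpha-1}(t+a)^{-2}dt=a^{\alpha-2}\Gamma(\alpha)\Gamma(2-\alpha)$ plus the reflection and double-angle formulae. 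I checked that your bracket reproduces exactly the paper's $\widehat u_f(z,t)$ (the $y$-independent part is $(t+z)^{-2}-(t-z)^{-2}=-4tz(t^2-z^2)^{-2}$, and the endpoint pieces reassemble into $u_f(0,t),w_f(0,t),u_f(T,t),w_f(T,t)$ with the right signs), and that your constant collapses to $\kappa_\alpha(z/i)^{\alpha-1}$ for $\Im(z)>0$. Your route is more elementary and self-contained — no ODE system, no contour — at the price of the case-split on $\mathrm{sign}(x-y)$ and the branch bookkeeping in the Beta-integral step, which you correctly flag as the delicate point; the paper's ODE formulation avoids the case-split and packages the endpoint data cleanly, but still needs a contour argument with the same branch care at the end. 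Both are sound.
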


\begin{proof}
Differentiating $u_{f}(x,t)$ once with respect to $x$ gives
\begin{align*}
u'_{f}(x,t) & =  
\frac {d}{dx} \left(
\int_0^x  (x-y)e^{-t(x-y)} f(y)dy
- 
\int_x^T  (y-x)e^{-t(y-x)} f(y)dy
\right) =\\
&
w_f(x,t)
- t\int_0^x    (x-y)e^{-t(x-y)} f(y)dy  
- t \int_x^T   (y-x)e^{-t(y-x)} f(y)dy,
\end{align*}
and, consequently,
\begin{equation}\label{ubnd}
\begin{aligned}
u'_{f}(0,t) =\, & w_f(0,t) + t u_f(0,t), \\
u'_{f}(T,t) =\, & w_f(T,t) - t u_f(T,t).
\end{aligned}
\end{equation}
Similarly, 
\begin{align*}
w'_f (x,t)  = & \frac d{dx} \int_0^x  e^{-t(x-y)} f(y)dy+\frac d{dx}\int_x^T  e^{-t(y-x)} f(y)dy =\\
&
 - t\int_0^x    e^{-t(x-y)} f(y)dy
+ t\int_x^T  e^{-t(y-x)} f(y)dy
\end{align*}
and 
\begin{equation}\label{wbnd}
\begin{aligned}
w'_f (0,t)  =  & \phantom{+\ } t\, w_f (0,t), \\
w'_f (T,t)  = & - t  w_f (T,t).
\end{aligned}
\end{equation}

Taking further derivative gives the system of equations 
\begin{equation}\label{diffeq}
\begin{aligned}
u''_f(x,t) =\,  
&
t^2  u_f(x,t )  + 2w'_f(x,t),   
\\
w''_f (x,t)  =\, &
 t^2  w_f (x,t) - 2t     f(x). 
\end{aligned}
\end{equation}
Applying the Laplace transform to the first equation we obtain 
\[
\widehat u''_f(z,t) =
2\widehat w'_f(z,t)    + t^2  \widehat u_f(z,t). 
\]
or, equivalently, 
\begin{align*}
&
e^{-zT}   u_f'(T,t)-u_f'(0,t) + 
e^{-zT}  z u_f(T,t)-zu_f(0,t) + z^2\widehat u_f(z,t)
=\\
&
2 e^{-zT}w_f(T,t)-2w_f(0,t) + 2z \widehat w_f(z,t)   + t^2  \widehat u_f(z,t ).
\end{align*}
Collecting the terms and using the boundary conditions \eqref{ubnd}, this can be written as
\begin{align*}
&
e^{-zT}  \Big(     (z-t)   u_f(T,t) -w_f(T,t) \Big)    - (z+t) u_f(0,t) 
  +  w_f(0,t) 
\\
&
 - 2z \widehat w_f(z,t)  + (z^2  - t^2)  \widehat u_f(z,t ) =0,
\end{align*}
Due to the usual relation between Laplace transforms of a function and its derivatives, and again, in view of the boundary conditions 
\eqref{ubnd}, we can further write 
\begin{equation}\label{ufzt}
\begin{aligned}
\widehat u_f(z,t )= & \,   \frac{2z}{z^2-t^2} \widehat w_f(z,t) 
-e^{-zT}  \Big(  \frac{  u_f(T,t)}{z+t} 
 -  \frac{w_f(T,t)}{z^2-t^2}\Big) \\
 &
+   \frac {u_f(0,t)}{z-t}  - \frac {w_f(0,t) } {z^2-t^2}.
\end{aligned}
\end{equation}
A similar calculation shows that the second equation in \eqref{diffeq} along with the corresponding boundary conditions \eqref{wbnd} 
yields 
$$
  e^{-zT}   (z-t)  w_f (T,t)     -(z+t)\, w_f (0,t)  + (z^2-t^2)\widehat w_f(z,t)   
 + 2t  \widehat f(z)    =0
$$
and 
\[
  \widehat w_f(z,t) = -e^{-zT}    \frac{  w_f (T,t)}{z+t}     +\frac {w_f (0,t)}{z-t}     -\frac{ 2t}{ z^2-t^2}  \widehat f(z).
\]
Combining this with \eqref{ufzt}, we obtain
\[
\widehat u_f(z,t ) =   - \frac{4 z t}{(z^2-t^2)^2}   \widehat f(z)
+   \frac {u_f(0,t)}{z-t} + \frac {w_f(0,t) } {(z -t)^2} 
-e^{-zT}  \left( \frac{  u_f(T,t)}{z+t}  +   \frac{w_f(T,t)}{(z +t)^2} \right).
\]
By definition \eqref{predst}, 
\[
\begin{aligned}
c_\alpha  \widehat v_{f, \alpha} (z)  =\, & 
e^{-zT} \int_0^\infty t^{\alpha-1}   u_{f} (T,t) dt 
- \int_0^\infty t^{\alpha-1} u_{f} (0,t) dt \\
&
+ z
\int_0^\infty t^{\alpha-1} \widehat u_{f} (z,t) dt.
\end{aligned}
\]
Substituting the expression for $\widehat u_f(z,t )$, we arrive at \eqref{eq:6.3}
with 
\[
N_\alpha(z)  = -\frac 1 {c_\alpha} 4 z^2 \int_0^\infty   \frac{ t^\alpha }{(t^2-z^2)^2}    dt.
\]
The simpler expression \eqref{Nalpha} is derived by the standard contour integration.
\end{proof}

\medskip
We are now in position to proceed with the proof of Lemma \ref{lem:4.1}.
\medskip

\begin{proof}[Proof of Lemma \ref{lem:4.1}]
\

\medskip
\noindent 
1. Observe that $K_X(s,t)$ in \eqref{KX} is differentiable in $s\in (0,T)$ and 
\begin{align*}
\frac {\partial}{\partial s} K_X(s,t) = & \, \beta K_X(s,t) + 
\frac{\partial}{\partial s}\int_0^t  e^{\beta(t-v)} \frac {\partial }{\partial v} K_W(s,v)dv.
\end{align*}
Hence taking derivative of \eqref{maineq} we get 
\begin{align*}
&
\frac {\partial^2} {\partial s^2}  \int_0^T g(r) \frac {\partial }{\partial r} K_V(r,s)dr + 
\beta \left(\mu_\eps^2 \int_0^T  K_X(s,r)  g(r)dr-\mu_\eps    K_X(s,T) \right)
+ \\
&
\mu_\eps^2 \frac{\partial}{\partial s}\int_0^T  g(r)
\int_0^r e^{\beta(r-v)} \frac{\partial}{\partial v}K_W(s,v)  dv
 dr
 =  
\mu_\eps \frac{\partial}{\partial s}\int_0^T e^{\beta(T-v)} \frac{\partial}{\partial v}K_W(s,v) dv.
\end{align*}
In view of \eqref{maineq},  the expression in brackets here can be replaced with
\[
\mu_\eps^2 \int_0^T  K_X(s,r)  g(r)dr-\mu_\eps    K_X(s,T)= -\frac{\partial}{\partial s} \int_0^T   g(r)\frac{\partial}{\partial r} K_V(r,s)dr,
\]
and the last term in the left had side with
\[
\int_0^T  \int_0^r  g(r) e^{\beta(r-v)} \frac{\partial}{\partial v}K_W(s,v)  dvdr = 
\int_0^T \frac{\partial}{\partial r}K_W(s,r) e^{-\beta r}\int_r^T e^{\beta u}g(u)du dr,
\]
after integration by parts. Plugging these expressions, we arrive at 
\begin{align*}
\frac{\partial^2}{\partial s^2} \int_0^T g(r) \frac {\partial}{\partial r}K_V(s,r) dr &
- 
\beta \frac{\partial}{\partial s} \int_0^T g(r)\frac{\partial}{\partial r}K_V(s,r)dr
\\
&
+  \mu_\eps^2 \frac{\partial}{\partial s}\int_0^T \psi(r)
\frac{\partial}{\partial r} K_W(s,r)   
 dr =0,
\end{align*}
with  $\psi(r)$ as defined in \eqref{psidef}.
In terms of the transformation introduced in \eqref{vfa},
this equation is equivalent to 
\[
\frac \partial {\partial s}  v_{g, \alpha_2 }(s) 
- \beta   v_{g, \alpha_2}(s) + \mu_\eps^2  v_{\psi, \alpha_1}(s)  =0.
\]
Applying the Laplace transform to both sides and using the condition $v_{g,\alpha_2}(0)$ $ = 0$, implied by \eqref{maineq}, we obtain
\begin{equation}\label{eq:6.1}
e^{-zT}v_{g,\alpha_2}(T) + (z -\beta) \widehat v_{g, \alpha_2 }(z) 
+ 
\mu_\eps^2  \widehat v_{\psi, \alpha_1}(z)  =0.
\end{equation} 
Similarly, the Laplace transform of \eqref{psidef} yields the relation
\begin{equation}\label{psihatg}
 (z+\beta)\widehat \psi(z)  =   \psi(0)+  \frac 1 {\mu_\eps} e^{-zT} -  \widehat g(z).
\end{equation}
Combining \eqref{eq:6.1}, \eqref{psihatg} and  \eqref{eq:6.3} with $f:=g$ and $f:=\psi$ gives the  
representation, claimed in \eqref{gTz_expr}
$$
\widehat \psi(z) \Lambda(z) = 
e^{-zT} \Phi_1(-z)  +     \Phi_0(z) + 
    N_{\alpha_2}(z) (z -\beta)  \Big(\psi(0)+  \frac 1 {\mu_\eps} e^{-zT} \Big),
$$
with
\begin{equation}\label{Phi0Phi1}
\begin{aligned}
\Phi_0(z) :=\, &  \phantom{+\ }\Psi_{g,0}(z)(z -\beta) +  \mu_\eps^2    \Psi_{\psi,0}(z), \\
\Phi_1(z) :=\, & -\Psi_{g,1}(z)(z +\beta) +    \mu_\eps^2  \Psi_{\psi,1}(z)+ v_{g,\alpha_2}(T).
\end{aligned}
\end{equation}
The Cauchy integrals in \eqref{Psidefn} define sectionally holomorphic functions on $\mathbb{C}\setminus \Real_+$, and 
the estimates \eqref{apriori_est} are derived from \eqref{Psidefn} and \eqref{Phi0Phi1} by standard calculations.  

\medskip 
\noindent 
2. Subtracting the limits of equation 
\begin{equation}\label{eq:6.3g}
\widehat v_{g, \alpha_2} (z)  =  N_{\alpha_2}(z) \widehat g(z)+ e^{-zT} \Psi_{g,1}(-z) +  \Psi_{g,0}(z)
\end{equation}
as $z\to t\in \Real_+$ in the upper and lower half-planes, gives the boundary condition 
\[
\Psi_{g,0}^+(t)-\Psi_{g,0}^-(t)=  -\big(N_{\alpha_2}^+(t) - N_{\alpha_2}^-(t)\big)\widehat g(t), \quad t>0.
\]
Since the function in the right hand side is H\"older on $\Real_+\cup \{\infty\}$, and  $\Psi_{g,0}(z)$ vanishes as $z\to \infty$, 
applying the Sokhotski-Plemelj formula gives
$$
\Psi_{g,0}(z) = -\frac 1 {2\pi i} \int_0^\infty \frac{1}{t-z}\big(N_{\alpha_2}^+(t) - N_{\alpha_2}^-(t)\big)\widehat g(t)dt, \quad 
z\in \mathbb{C}\setminus \Real_+.
$$
Condition \eqref{cond1} now follows, since $v_{g,\alpha_2}(0)=0$ and, in view of \eqref{eq:6.3g}, 
\[
v_{g,\alpha_2}(0) =   \lim_{\Re(z)\to \infty} z \widehat v_{g, \alpha_2}(z) =
\lim_{\Re(z)\to \infty} z 
\Big(
N_{\alpha_2}(z) \widehat g(z) +  \Psi_{g,0}(z)
\Big).
\]
Formula \eqref{PTfla} is obtained similarly, since by \eqref{PTfun}, 
\[
P_T =   \frac 1 {\mu_\eps} v_{g, \alpha_2} (T)  = \frac 1 {\mu_\eps} \lim_{\Re(z)\to\infty} ze^{-zT}\widehat v_{g, \alpha_2} (-z). 
\]
\end{proof}

\begin{rem}\label{rem:4.2}
For $\alpha_2 \in (0,1)$, the first term  in the brackets in both \eqref{cond1} and \eqref{PTfla} vanishes, and 
these equations reduce to 
\begin{equation}\label{cond1_alpha_small}
\frac 1 {2\pi i}\int_0^\infty  \Big(N_{\alpha_2}^+(t)-N_{\alpha_2}^-(t)\Big)
\widehat g(t)dt
=0
\end{equation} 
and  
\begin{equation}\label{PT_alpha_small}
P_T =   \frac 1 {\mu_\eps}  
   \frac 1 {2\pi i}  \int_0^\infty   \Big(N_{\alpha_2}^+(t)   - N_{\alpha_2}^-(t) \Big) e^{-tT}\widehat g(-t) dt,
\end{equation}
respectively. 
For $\alpha_2\in (1,2)$ this first term diverges to infinity as $z\to\infty$ and compensates 
by the leading asymptotic term of the integral. Hence the useful information is actually contained in the second order 
asymptotics of these expressions. 
\end{rem}

\noindent
The next lemma reveals several important properties of the structural function.

\begin{lem}\label{lem:4.2}
The function $\Lambda(z)$ defined in \eqref{Lambdaz} is sectionally holomorphic on $\mathbb{C}\setminus \Real$  with a jump 
discontinuity across the real line, and 
its limits $\Lambda^\pm (t)$, $t\in \Real$ satisfy  
\begin{equation}\label{Lambdasym}
\Lambda^+(t) = \Lambda^-(-t)\quad \text{and}\quad \overline{\Lambda^+(t)} = \Lambda^-(t).
\end{equation}
It does not vanish on the cut plane, except, possibly, at simple zeros. More precisely, 

\medskip

\begin{enumerate}
\addtolength{\itemsep}{0.7\baselineskip}
\renewcommand{\theenumi}{\alph{enumi}}
 
\item   $\Lambda(z)$ has  no zeros if $\alpha_1>\alpha_2$, or

\item   a pair of purely real zeros at $\pm t_0$ with $t_0 = \sqrt{\beta^2+\mu_\eps^2}$, if $\alpha_1=\alpha_2$,  or

\item   complex zeros at $\pm z_0$ and $\pm \overline{z}_0$ for some $z_0$ 
with $\arg(z_0) \in (0, \frac \pi 2)$, if $\alpha_1<\alpha_2$.

\end{enumerate}
 
\end{lem}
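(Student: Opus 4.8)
The analytic structure is inherited directly from the building blocks. Since $\Lambda$ in \eqref{Lambdaz} is a linear combination of $N_{\alpha_1}$ and $N_{\alpha_2}$ with the entire, even, real-valued coefficient $(z^2-\beta^2)$, it is sectionally holomorphic on $\mathbb{C}\setminus\Real$, and both identities in \eqref{Lambdasym} follow termwise from the symmetries \eqref{Nsym} of $N_\alpha$ together with $\overline{z^2-\beta^2}=(\overline z)^2-\beta^2$ and the evenness of the polynomial factor. The core of the statement is the zero count, which I would reduce to a half-plane problem. On the upper half-plane $N_\alpha(z)=\kappa_\alpha(z/i)^{\alpha-1}$ with $\kappa_\alpha>0$, so the rotation $w=z/i$, which maps the upper half $z$-plane bijectively onto the right half $w$-plane (where the principal branch of $w^\delta$ is in force), gives
\[
\Lambda(z)=-\kappa_{\alpha_2}\,w^{\alpha_2-1}\,G(w),\qquad
G(w):=w^2+c\,w^{\delta}+\beta^2,
\]
with $c:=\mu_\eps^2\kappa_{\alpha_1}/\kappa_{\alpha_2}>0$ and $\delta:=\alpha_1-\alpha_2\in(-2,2)$. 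Because $w^{\alpha_2-1}$ never vanishes, the zeros of $\Lambda$ in the upper half-plane correspond, with multiplicity, to the zeros of $G$ in the open right half-plane, and the lower half-plane is recovered from the conjugation used to define $\Lambda$ there. When $\alpha_1=\alpha_2$ one has $\delta=0$, $c=\mu_\eps^2$ and $G(w)=w^2+t_0^2$, so $\Lambda/N_\alpha=z^2-t_0^2$ extends to an entire function with the two real zeros $\pm t_0$; this is part (b).

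For (a) and (c) I would apply the argument principle to $G$ on the boundary of the half-disk $\{\,|w|\le R,\ \Re w\ge0\,\}$, oriented counterclockwise and indented by a small semicircle of radius $\rho$ around the branch point $w=0$. The decisive observation is that $G$ has no zeros on the imaginary axis: for $w=it$ with $t>0$,
\[
\Im G(it)=c\,t^{\delta}\sin\!\big(\tfrac{\pi\delta}{2}\big),
\]
which is strictly positive for $\delta\in(0,2)$ and strictly negative for $\delta\in(-2,0)$. Thus $G(it)$ is trapped in a single open half-plane for all $t>0$ (and, by $G(\overline w)=\overline{G(w)}$, in the mirror half-plane for $t<0$), so no winding can accumulate along the axis and the net change of argument is fixed by the endpoint limits alone. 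On the large arc $G\sim w^2$ contributes $+2\pi$; near $w=0$ the behaviour depends on the sign of $\delta$, the dominant term being $\beta^2$ when $\delta>0$ and $c\,w^\delta$ when $\delta<0$, the latter contributing $-\delta\pi>0$ over the indentation. Collecting the pieces, the imaginary-axis-plus-indentation part of $\Delta\arg G$ evaluates to $-2\pi$ when $\delta>0$ and to $+2\pi$ when $\delta<0$ (in the second case the two axis segments contribute $\pi(1+\delta/2)$ each and the small arc $-\delta\pi$, summing to $2\pi$). Adding the arc term, the number of zeros of $G$ in the right half-plane is $1+\tfrac1{2\pi}(-2\pi)=0$ if $\alpha_1>\alpha_2$ and $1+\tfrac1{2\pi}(2\pi)=2$ if $\alpha_1<\alpha_2$.

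The count $0$ is assertion (a). In case (c) the two zeros form a conjugate pair $w_0,\overline w_0$ — they cannot be real, since $G>0$ on the positive real axis — and with $w_0$ taken in the first quadrant, $z_0:=i\overline w_0$ lies in the first quadrant of the $z$-plane with $\arg z_0\in(0,\tfrac\pi2)$. The relation $\Lambda(-\overline z)=\overline{\Lambda(z)}$ on the upper half-plane then yields the companion zero $-\overline z_0$ in the second quadrant, and conjugation across the cut supplies $\overline z_0$ and $-z_0$, giving exactly the quadruple $\pm z_0,\pm\overline z_0$ of part (c). Finally, every zero is simple: eliminating $c\,w_0^\delta=-(w_0^2+\beta^2)$ at a zero gives $G'(w_0)=\big((2-\delta)w_0^2-\delta\beta^2\big)/w_0$, and its vanishing would force $w_0^2=\delta\beta^2/(2-\delta)$, a real number with the sign of $\delta$, placing $w_0$ on the real or imaginary axis, off the open right half-plane where the zeros lie. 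I expect the argument-principle bookkeeping in case (c) to be the main obstacle: one must justify the indentation around the branch point $w=0$ and verify that the continuous argument matches at the junctions $\pm i\rho$, so that the small-arc term $-\delta\pi$ compensates exactly and the total is the integer $+2\pi$.
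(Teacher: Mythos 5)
Your proof is correct, but it takes a genuinely different route from the paper's. The paper works in polar coordinates $z=\rho e^{i\phi}$ directly in the first quadrant of the $z$-plane, splits $-\Lambda(z)/N_{\alpha_2}(z)=0$ into its real and imaginary parts, and solves the resulting system: the imaginary-part equation is shown to be sign-definite when $\alpha_1>\alpha_2$ (no zeros), to force $\phi=0$ when $\alpha_1=\alpha_2$, and otherwise to determine $\rho$ as an explicit function of $\phi$, after which a monotonicity argument shows the real-part equation has a unique root $\phi_0\in(0,\frac\pi2)$. Your rotation $w=z/i$ followed by the argument principle for $G(w)=w^2+c\,w^\delta+\beta^2$ on an indented half-disk replaces that explicit solution by a winding-number count; the sign-definiteness of $\Im G(it)$ that you use to control the winding along the imaginary axis is essentially the same observation as the paper's sign analysis of the imaginary-part equation, but the surrounding architecture is different. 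What the paper's computation buys is the explicit parametrization of the zero (the formula for $\rho_0$ in terms of $\phi_0$ and the transcendental equation for $\phi_0$), which is reused later, e.g.\ to write $z_0$ in closed form when $\beta=0$ in the proof of Theorem \ref{thm:2}; what your approach buys is a cleaner counting argument and, as a bonus, an explicit verification of simplicity via $G'$, which the paper does not spell out. Two small points you should tidy up: in the indentation bookkeeping for $\delta>0$ the dominant term near $w=0$ is $c\,w^\delta$ rather than $\beta^2$ when $\beta=0$ (the paper allows $\beta$ arbitrary), though the total still comes out to $-2\pi$; and in the simplicity argument the conclusion $w_0^2=\delta\beta^2/(2-\delta)>0$ would place $w_0$ on the positive real axis, which \emph{is} inside the open right half-plane --- the correct exit there is that $G>0$ on $(0,\infty)$, while for the relevant case $\delta<0$ one indeed gets $w_0^2\le 0$ and hence $w_0$ purely imaginary.
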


\begin{proof}
The analytic structure of $\Lambda(z)$, the discontinuity and properties \eqref{Lambdasym} are inherited from 
$N_\alpha(z)$, cf. \eqref{Nsym}. 
The symmetric structure of zeros is obvious from the definition of $\Lambda(z)$, and hence it suffices to 
locate its zeros only in the first quadrant. 
Since $N_\alpha(z)$ may vanish only at the origin, for $z := \rho e^{i\phi}$ with $\rho\in \Real_+$ and 
$\phi \in [0,\frac \pi 2]$,
\[
-\frac{\Lambda(z)}{N_{\alpha_2}(z)} =\;   \mu_\eps^2 \frac{N_{\alpha_1}(z)}{N_{\alpha_2}(z)}- z^2+\beta^2= 
\mu^2_\eps   \frac{\kappa_{\alpha_1}  }{\kappa_{\alpha_2} } \rho^{\alpha_1-\alpha_2}e^{i(\phi -\frac \pi 2)(\alpha_1-\alpha_2)} 
-   \rho^2e^{2\phi i}+\beta^2.
\]
Equating the imaginary and real parts of this expression to zero, we get  
\begin{align*}
&
\rho^2\sin (2\phi)
-\mu_\eps^2   \frac{\kappa_{\alpha_1}  }{\kappa_{\alpha_2} } \rho^{-\delta}\sin  (\tfrac \pi 2-\phi)\delta  
   =0, 
\\
&
\rho^2\cos(2\phi)-\mu_\eps^2   \frac{\kappa_{\alpha_1}  }{\kappa_{\alpha_2} } \rho^{-\delta}\cos  (\tfrac \pi 2-\phi)\delta
    = \beta^2,
\end{align*}
where $\delta:= \alpha_2-\alpha_1$. The angle $\phi = \frac \pi 2$ is inconsistent with the second equation
and $\phi =0$ with the first equation, unless $\delta=0$ as well. In this case, that is, when $\alpha_1=\alpha_2$, 
$\phi=0$ is the only possibility, and there are two real zeros as claimed. 

If $\alpha_1>\alpha_2$ the first equation is inconsistent for any $\rho>0$, and hence $\Lambda(z)$ does not have zeros in this case.
For $\alpha_1<\alpha_2$ the absolute value $\rho$ can be expressed in terms of $\phi$ using the first 
equation 
\begin{equation}\label{rhophi}
\rho 
=
\left(\mu_\eps^2   \frac{\kappa_{\alpha_1}  }{\kappa_{\alpha_2} }    
\right)^{ \frac 1 {2+\delta}} 
\left(  \frac {\sin (\tfrac \pi 2-\phi)\delta}{\sin (2\phi)}  
\right)^{ \frac 1 {2+\delta}}.
\end{equation}  
Plugging this into the second equation we get 
\[
\sin \big(\widetilde \phi\delta\big)^{ -\frac {\delta} {2+\delta}}
\sin (2\widetilde \phi)^{ -\frac 2 {2+\delta}} 
\sin \big(\widetilde \phi(2+\delta)  
\big)
    = 
-\beta^2\left(\mu_\eps^2   \frac{\kappa_{\alpha_1}  }{\kappa_{\alpha_2} } \right)^{ -\frac 2 {2+\delta}}
\]
where $\widetilde \phi:= \tfrac \pi 2 -\phi\in (0, \frac \pi 2)$ was defined for brevity. 
The left hand side is a continuous decreasing function of $\widetilde \phi$,  it diverges to $-\infty$ as 
$\widetilde \phi\to \tfrac\pi 2$ and has a positive finite limit at $\widetilde \phi=0$.
Hence this equation has the unique root $\phi_0$ and, consequently, $\Lambda(z)$ has the unique zero in the first quadrant 
at $z_0:= \rho_0 e^{i\phi_0}$ with $\rho_0$ given by \eqref{rhophi} with $\phi$ replaced by $\phi_0$.
\end{proof}

\subsection{The equivalent problem}\label{sec:equiv}
In this subsection we formulate a different problem, which is equivalent to solving equation \eqref{maineq}.
The key observation to this end is that all singularities in expression \eqref{gTz_expr} must be removable,
since the Laplace transform  in \eqref{gTz} defines an entire function $\widehat g(z)$. 
In particular, its limits as $z\to t\in \Real$ in the upper and lower half-planes must coincide, which implies
\begin{align*}
&
(t+\beta) \frac{\Phi_0^+(t)+e^{-tT} \Phi_1(-t)}{\Lambda^+(t)} +  \mu_\eps^2 \frac{N_{\alpha_1}^+(t)}{\Lambda^+(t)}
\Big(\psi(0)+\frac {1}{\mu_\eps}e^{-tT}\Big) = \\
  &
(t+\beta) \frac{\Phi_0^-(t)+e^{-tT} \Phi_1(-t)}{\Lambda^-(t)} +  \mu_\eps^2 \frac{N_{\alpha_1}^-(t)}{\Lambda^-(t)}
\Big(\psi(0)+\frac {1}{\mu_\eps}e^{-tT}\Big), \quad t\in \Real_+
\end{align*}
and
\begin{align*}
&
 (t+\beta) \frac{\Phi_0(t)+e^{-tT} \Phi_1^-(-t)}{\Lambda^-(t)} +  \mu_\eps^2 \frac{N_{\alpha_1}^-(t)}{\Lambda^-(t)}
\Big(\psi(0)+\frac {1}{\mu_\eps}e^{-tT}\Big)
= \\
&
 (t+\beta) \frac{\Phi_0(t)+e^{-tT} \Phi_1^+(-t)}{\Lambda(z)} +  \mu_\eps^2 \frac{N_{\alpha_1}^-(t)}{\Lambda^-(t)}
\Big(\psi(0)+\frac {1}{\mu_\eps}e^{-tT}\Big), \quad t\in \Real_-
\end{align*}
In view of symmetries \eqref{Lambdasym} and formula \eqref{Lambdaz}, these equations can be written as 
\begin{equation}\label{eq:6.11}
\begin{aligned}
 \Phi_0^+  & (t)   -  \frac {\Lambda^+(t)}{\Lambda^-(t)}    \Phi_0^-(t) 
=\, 
e^{-tT} \Phi_1(-t)\Big(\frac{ \Lambda^+(t) }{\Lambda^-(t)}-1\Big) \\
& 
+
\Big(\frac{\Lambda^+(t)}{\Lambda^-(t)} N_{\alpha_2}^-(t)-  N_{\alpha_2}^+(t)\Big)(t -\beta)  \Big(\psi(0)+  \frac 1 {\mu_\eps} e^{-tT} \Big), 
\quad t\in \Real_+
 \\
 \Phi_1^+  & (t)  -   \frac{\Lambda^+(t)}{ \Lambda^-(t)}  \Phi_1^-(t)
=\,  
  e^{-tT}\Phi_0(-t) \Big( \frac{\Lambda^+(t) }{ \Lambda^-(t)} - 1\Big) \\
&   - \Big(   \frac {\Lambda^+(t)} { \Lambda^-(t)} N_{\alpha_2}^-(t)    
   -   N_{\alpha_2}^+(t) \Big) (t +\beta)  \Big(e^{-tT}\psi(0)+  \frac 1 {\mu_\eps}   \Big), \quad t\in \Real_+.
\end{aligned}
\end{equation}
In addition, removal of the poles in \eqref{gTz_expr} implies that the expression in the brackets therein
must vanish at the zeros of $\Lambda(z)$,
\begin{equation}\label{poles}
\begin{aligned}
& 
(z+\beta) \Big(\Phi_0(z)+e^{-zT} \Phi_1(-z)\Big) +  \mu_\eps^2  N_{\alpha_1}(z) 
\Big(\psi(0)+\frac {e^{-zT}}{\mu_\eps}\Big)=0, \\ 
& \quad \forall z\in \Big\{\zeta : \Lambda(\zeta)=0\Big\}.
\end{aligned}
\end{equation}

At this point the proof splits into several cases, corresponding to the three possible zeros configurations of $\Lambda(z)$,
described in Lemma \ref{lem:4.2}, and the computation of filtering error, as explained in Remark \ref{rem:4.2}.  
While the specific calculations are somewhat different in each case, they are based on the same technique, which we 
will detail for $\alpha_1>\alpha_2\in (0,1)$, omitting all other cases.

Define $\theta(t) := \arg\big(\Lambda^+(t)\big)$, choosing the argument branch so that $\theta(t)$ is continuous on 
$(0,\infty)$ and $\theta(\infty) := \lim_{t\to\infty}\theta(t)$ belongs to the interval $(-\pi, \pi)$. 
This defines $\theta(t)$ uniquely, and, for $\alpha_1>\alpha_2$,
\[
\theta(\infty) = \frac{1-\alpha_2}{2}\pi
\quad 
\text{and}
\quad
\theta(0+) = \frac{1-\alpha_2}{2}\pi+\pi.
\]
In what follows we will need a function $X(z)$, which is sectionally holomorphic on $\mathbb{C}\setminus \Real_+$, satisfies 
the boundary condition 
\begin{equation}\label{homo}
\frac{X^+(t)}{X^-(t)} = \frac{\Lambda^+(t)}{\Lambda^-(t)} = e^{2i\theta(t)}, \quad t\in \Real_+,
\end{equation}
and does not vanish on the cut plane. 
Finding all such functions is known as the Hilbert boundary value problem, whose solutions are given by the 
Sokhotski-Plemelj formula 
\begin{equation}\label{Xz}
X(z) = (-z)^{k-\theta(\infty)/\pi}\exp \left(\frac 1 \pi \int_0^\infty \frac {\theta(t)-\theta(\infty)}{t-z}dt\right),
\end{equation}
where $k$ is an arbitrary integer. The choice of $k$ controls the growth of $X(z)$ at the origin and at infinity
\begin{equation}\label{Xzest}
 X(z)  = 
\begin{cases}
O(z^{k-\theta(0+)/\pi}), & z\to 0, \\
O(z^{k-\theta(\infty)/\pi}), & z\to \infty.
\end{cases}
\end{equation}

Define a pair of auxiliary functions 
\begin{equation}\label{SzDz}
S(z)   := \frac{\Phi_0(z)+\Phi_1(z)}{2X(z)}\quad\text{and}\quad
D(z)   := \frac{\Phi_0(z)-\Phi_1(z)}{2X(z)}.
\end{equation}
In view of \eqref{eq:6.11} and \eqref{homo}, these functions satisfy the {\em decoupled} boundary conditions 
\begin{equation}\label{decoupled}
\begin{aligned}
S^+(t) - S^-(t) = & \phantom{+\ \, } 2i e^{-tT}h(t)  S(-t)  + f_S(t), \\
D^+(t) - D^-(t) = & - 2i e^{-tT}h(t) D (-t)  + f_D(t),
\end{aligned}
\qquad t\in \Real_+,
\end{equation}
where we defined 
\begin{equation}\label{fSfD}
\begin{aligned}
f_S(t) :=  \frac 1 2 & \Big(\frac {N_{\alpha_2}^-(t)}{X^-(t)} -  \frac{N_{\alpha_2}^+(t)}{X^+(t)}\Big)\cdot \\
&
\Big( 
(t -\beta)  \Big(\psi(0)+  \frac {1} {\mu_\eps}e^{-tT}  \Big)
- 
(t +\beta)  \Big(e^{-tT}\psi(0)+  \frac 1 {\mu_\eps}   \Big)\Big), \\
f_D(t) := \frac 1 2 &  \Big(\frac{N_{\alpha_2}^-(t)}{X^-(t)} -  \frac{N_{\alpha_2}^+(t)}{X^+(t)}\Big)\cdot \\
&
\Big(
(t -\beta)  \Big(\psi(0)+  \frac {1} {\mu_\eps}e^{-tT}  \Big)+  (t +\beta)  \Big(e^{-tT}\psi(0)+  \frac 1 {\mu_\eps}   \Big)
\Big),
\end{aligned}
\end{equation}
and the real valued function
\[
h(t):=   \frac{X(-t)}{X^+(t)}e^{ i\theta(t)}\sin \theta(t) = 
 \exp\left(-\frac 1 \pi \int_0^\infty \theta'(s) \log \left|\frac{t+s}{t-s}\right|ds\right)\sin \theta(t).
\]

Due to estimates \eqref{apriori_est} and \eqref{Xzest}, the choice $k=1$ in \eqref{Xz} guarantees that $S(-t)$ and $D(-t)$
are integrable and, moreover, square integrable near the origin, and implies that $S(z)$ and $D(z)$ vanish as $z\to\infty$. 
Hence by the Sokhotski-Plemelj theorem, applied to \eqref{decoupled}, these functions must satisfy the equations
\begin{equation}\label{eq:6.13}
\begin{aligned}
S(z) = & \phantom{+\ }\frac 1 \pi \int_0^\infty  \frac{ e^{-tT}h(t)}{t-z} S(-t)dt  + F_S(z), \\
D(z) = & -\frac 1 \pi \int_0^\infty  \frac{ e^{-tT}h(t)}{t-z} D(-t)dt  + F_D(z),
\end{aligned}
\end{equation}
where we defined 
\begin{equation}\label{F_SF_D}
F_S(z) :=   \frac 1{2\pi i} \int_0^\infty \frac {f_S(t)} {t-z}  dt \quad \text{and}\quad 
F_D(z) :=   \frac 1{2\pi i} \int_0^\infty \frac {f_D(t)} {t-z}  dt.
\end{equation}

Consider now a pair of auxiliary integral equations 
\begin{equation}\label{pqeq}
\begin{aligned}
p(t) = & \phantom{+\ }\frac 1 \pi \int_0^\infty  \frac{ e^{-\tau T}h(\tau )}{\tau+t} p(\tau)d\tau  + F_S(-t), \\
q(t) = & -\frac 1 \pi \int_0^\infty  \frac{ e^{-\tau T}h(\tau)}{\tau+t}q(\tau)d\tau  + F_D(-t),
\end{aligned}  \qquad t\in \Real_+.
\end{equation}
In view of \eqref{fSfD} the restrictions $F_S(-t)$ and $F_D(-t)$ are real valued functions. 
The operator in the right hand side
\[
(A f)(t)=\frac 1 \pi \int_0^\infty  \frac{ e^{-\tau T}h(\tau )}{\tau+t} f(\tau)d\tau
\]
is a contraction on $L^2(\Real_+)$, see \cite[Lemma 5.6]{ChK}, and a calculation as in \cite[Lemma 5.7]{ChK}  shows that
$F_S, F_D\in L^2(\Real_+)$. Consequently,  equations \eqref{pqeq} have unique solutions  $p, q\in L^2(\Real_+)$.

Comparing \eqref{eq:6.13} and \eqref{pqeq} shows that 
\[
S(z) = p(-z) \quad \text{and}\quad D(z)=q(-z), \quad z\in \mathbb{C}\setminus \Real_+,
\]
where $p(z)$ and $q(z)$ are the analytic extensions. Then, by definition \eqref{SzDz},
\begin{equation}\label{PhiPhi}
\begin{aligned}
\Phi_0(z) = &  X(z) \big(p(-z)+q(-z)\big), \\
\Phi_1(z) = & X(z) \big(p(-z)-q(-z)\big).
\end{aligned}
\end{equation}

Let us summarize our findings so far. Given the unique solutions to the integral equations \eqref{pqeq}, 
we can compute the functions $\Phi_0(z)$ and $\Phi_1(z)$ by means of \eqref{PhiPhi} and plug them into \eqref{gTz_expr}.
The constant  $\psi(0)$ can be found by plugging the obtained expression for $\widehat g(z)$ into \eqref{cond1},
or equivalently in this case, into \eqref{cond1_alpha_small}. Applying the inverse Laplace transform to $\widehat g(z)$, 
gives a function which solves \eqref{maineq} and belongs to $L^1([0,T])\cap \Lambda^{H-\frac 1 2}_T$, where $\Lambda^{H-\frac 1 2}_T$ is a space of nonrandom functions, on which the stochastic integral with respect to fBm can be defined and has suitable properties
(see Appendix \ref{sec:A}). 
Thus  the original equation is reduced to an equivalent problem of 
solving integral equations \eqref{pqeq}. The filtering error $P_T$ is found by substitution of the 
expression for $\widehat g(z)$ into \eqref{PT_alpha_small}.

\subsection{Asymptotic analysis}
While for any fixed values of the parameters, the equivalent problem derived above does not appear 
any simpler than the original equation, it does simplify drastically when either of the limits $T\to\infty$ or 
$\eps \to 0$ is taken. 
The key to the asymptotic analysis are the estimates 
\begin{equation}\label{asym_est}
\big|p(z) - F_S(-z)\big|\le C \frac 1 z \frac  1 T,\quad \big|q(z) - F_D(-z)\big|\le C \frac 1 z \frac  1 T,  
\end{equation}
where $C$ is a constant independent of $T$ and $\eps$. These bounds are derived exactly as in \cite[Lemma 5.7]{ChK}
and we omit the proof.

\subsubsection{Large time asymptotics}
Upon substitution of expression \eqref{gTz_expr} into the integral in \eqref{cond1_alpha_small}, the latter simplifies, 
asymptotically as $T\to\infty$, to 
\begin{align*} 
&
\int_0^\infty  \Big(N_{\alpha_2}^+(t)-N_{\alpha_2}^-(t)\Big)
\widehat g(t)dt \asymp \\
&
\int_0^\infty  \Big(N_{\alpha_2}^+(t)-N_{\alpha_2}^-(t)\Big) \bigg(
(t+\beta) \frac{\Phi_0^+(t) }{\Lambda^+(t)} dt 
+ \psi(0)\mu_\eps^2  \frac{N_{\alpha_1}^+(t)}{\Lambda^+(t)} \bigg) dt.
\end{align*}
Due to \eqref{PhiPhi} and estimates \eqref{asym_est}, the first term satisfies  
\begin{align*}
& 
\int_0^\infty  \Big(N_{\alpha_2}^+(t)-N_{\alpha_2}^-(t)\Big)  (t+\beta) \frac{\Phi_0^+(t) }{\Lambda^+(t)} dt \asymp\\
&
\int_0^\infty  \Big(N_{\alpha_2}^+(t)-N_{\alpha_2}^-(t)\Big)  (t+\beta) \frac{X^+(t)  }{\Lambda^+(t)} \big(p^-(-t)+q^-(-t)\big)dt =\\
&
\int_0^\infty  \Big(N_{\alpha_2}^+(t)-N_{\alpha_2}^-(t)\Big)  (t+\beta) \frac{X^+(t)  }{\Lambda^+(t)} \Big(F_S^+(t)+F_D^+(t)\Big)dt,
\end{align*}
where, by definitions \eqref{fSfD},
\begin{equation}\label{FFSD}
\begin{aligned}
&
F_S(z)+F_D(z) = \frac 1 {2\pi i} \int_0^\infty \frac {f_S(t)+f_D(t) }{t-z}dt = \\
&
\frac 1 { 2\pi i} \int_0^\infty \frac {1 }{t-z} \Big(\frac {N_{\alpha_2}^-(t)}{X^-(t)} -  \frac{N_{\alpha_2}^+(t)}{X^+(t)}\Big)(t -\beta)
\Big( 
  \psi(0)+   \frac 1 {\mu_\eps} e^{-tT}  
\Big)dt \asymp\\
& 
\psi(0) \frac 1 { 2\pi i} \int_0^\infty 
\Big(\frac {N_{\alpha_2}^-(t)}{X^-(t)} -  \frac{N_{\alpha_2}^+(t)}{X^+(t)}\Big)\frac {t -\beta }{t-z}  dt =: \psi(0) R(z; \beta, \mu_\eps).
\end{aligned}  
\end{equation}
It follows that 
\begin{equation}\label{Ifla} 
\int_0^\infty  \Big(N_{\alpha_2}^+(t)-N_{\alpha_2}^-(t)\Big)
\widehat g(t)dt
\asymp \psi(0) I(\beta, \mu_\eps), 
\end{equation}
where the quantity 
\begin{align*}
&
I(\beta, \mu_\eps) :=  \\
&\frac 1{2\pi i}\int_0^\infty  \Big(N_{\alpha_2}^+(t)-N_{\alpha_2}^-(t)\Big)  
\bigg(
(t+\beta) \frac{X^+(t)  }{\Lambda^+(t)} R^+ (t;\beta, \mu_\eps)   
+  \mu_\eps^2  \frac{N_{\alpha_1}^+(t)}{\Lambda^+(t)} 
\bigg) dt
\end{align*}
does not depend on $T$. A lengthy but otherwise direct calculation shows that this expression is nonzero and 
therefore condition \eqref{cond1_alpha_small} implies  that $\psi(0) \to 0$ as $T\to\infty$. 

Similarly, we can simplify expression \eqref{PT_alpha_small},
\begin{align*}
&
P_T =  
\frac 1 {\mu_\eps} \frac 1 {2\pi i}  \int_0^\infty   \Big(N_{\alpha_2}^+(t)   - N_{\alpha_2}^-(t) \Big) e^{-tT}\widehat g(-t) dt \asymp \\
&
\phantom{+\ }\frac 1 {2\pi i}  \int_0^\infty   \Big(N_{\alpha_2}^+(t)   - N_{\alpha_2}^-(t) \Big) \Big(
( t-\beta) \frac 1 {\mu_\eps} \frac{ \Phi_1^+( t)}{\Lambda^+(t)} 
- \frac{N_{\alpha_1}^+(t)}{\Lambda^+(t)}
\Big)
dt   \asymp\\
& 
\phantom{+\ }\frac 1 {2\pi i}  \int_0^\infty   \Big(N_{\alpha_2}^+(t)   - N_{\alpha_2}^-(t) \Big)  
( t-\beta) \frac 1 {\mu_\eps} \frac{   X^+(t) }{\Lambda^+(t)} \big(F_S^+(t)-F_D^+(t)\big)
dt   \\
& 
-\frac 1 {2\pi i}  \int_0^\infty   \Big(N_{\alpha_2}^+(t)   - N_{\alpha_2}^-(t) \Big)  \frac{N_{\alpha_1}^+(t)}{\Lambda^+(t)}dt, 
\end{align*}
where  $e^{-tT}\widehat g(-t)$ is computed using \eqref{gTz_expr}.  
Since $\psi(0)$ remains bounded as $T\to \infty$, \eqref{fSfD} implies
\begin{align}\label{Qz}
F_S(z)-F_D(z) = & \frac 1 {2\pi i} \int_0^\infty \frac {f_S(t)-f_D(t) }{t-z}dt \asymp \\
&
\nonumber
    \frac 1 { 2\pi i} \frac 1 {\mu_\eps} \int_0^\infty 
 \Big(  \frac{N_{\alpha_2}^+(t)}{X^+(t)} -\frac {N_{\alpha_2}^-(t)}{X^-(t)}   \Big) \frac {t +\beta }{t-z}  dt 
 =: Q(z; \beta, \mu_\eps),
\end{align}  
and  hence, as claimed in \eqref{large-time}, $P_T$ converges to the limit
\begin{equation}\label{Pinfty_expr}
P_\infty(\beta, \mu_\eps) :=
\frac 1 {2\pi i}  \int_0^\infty   \Big(N_{\alpha_2}^+(t)   - N_{\alpha_2}^-(t) \Big)  
\bigg(
 \frac { t-\beta} {\mu_\eps} \frac{   X^+(t) }{\Lambda^+(t)} Q^+(t;\beta, \mu_\eps)
-  \frac{N_{\alpha_1}^+(t)}{\Lambda^+(xt)}\bigg)
dt. 
\end{equation}

\subsubsection{Small noise asymptotics}
To emphasise the dependence on $\eps$ and other parameters we will add them to the notations, writing 
$\Lambda(z;\beta, \mu_\eps)$ for $\Lambda(z)$, etc. 
In view of definitions \eqref{Nalpha} and \eqref{Lambdaz},  the structural function satisfies the scaling property
\[
\Lambda\big(\eps^{-\gamma}z;\beta, \mu_\eps \big) =   
\eps^{-\gamma(1+\alpha_2)}  \Lambda\big(z; \eps^\gamma \beta, \mu\big),\qquad \gamma := \frac 1 {2+\alpha_2-\alpha_1}>0.
\]
Consequently,  
$
\theta(\eps^{-\gamma} t;\beta, \mu_\eps) 
=   \theta\big(t; \eps^\gamma\beta, \mu\big) 
$
and, by definition \eqref{Xz},
\begin{align*}
X\big(\eps^{-\gamma} z; \beta, \mu_\eps\big) =\ &
(-\eps^{-\gamma} z)^{1-\frac{1-\alpha_2}{2}}
\exp \left(\frac 1 \pi \int_0^\infty \frac {\theta( t; \eps^\gamma\beta, \mu)-\theta(\infty)}{  t- z}d t\right) = \\
&
\eps^{-\frac 1 2\gamma(1+\alpha_2)} X\big(z; \eps^\gamma \beta, \mu\big).
\end{align*}
Substituting formula \eqref{gTz_expr}, expressions \eqref{PhiPhi} and estimates \eqref{asym_est} into the integral in 
\eqref{cond1_alpha_small} and changing the integration variable accordingly, we obtain  
\begin{align*}
&
 \int_0^\infty  \Big(N_{\alpha_2}^+(t)-N_{\alpha_2}^-(t)\Big)\widehat g(t)dt \asymp \\
&
-   \eps^{-  \gamma  \frac{1+\alpha_2}2}\int_0^\infty  
\Big(N_{\alpha_2}^+(t)-N_{\alpha_2}^-(t)\Big)  (t + \eps^{ \gamma}\beta) 
\frac{
  X^+\big(t; \eps^\gamma \beta, \mu\big)\big(F_S^+(\eps^{-\gamma}t)+F_D^+(\eps^{-\gamma}t)\big)
 }
 {  \Lambda^+\big(t; \eps^\gamma \beta, \mu\big)} dt \\
&
-
 \eps^{-\gamma \alpha_2}\mu^2\psi(0) \int_0^\infty  \Big(N_{\alpha_2}^+(t)-N_{\alpha_2}^-(t)\Big) 
    \frac{N_{\alpha_1}^+(t)}{  \Lambda^+\big(t; \eps^\gamma \beta, \mu\big)}
 dt,
\end{align*}
as $\eps\to 0$. 
Here, in view of \eqref{fSfD} and \eqref{F_SF_D}, 
\begin{align*}
&
F_S(\eps^{-\gamma}z)+F_D(\eps^{-\gamma} z) = \frac 1 {2\pi i} \int_0^\infty \frac {f_S(\eps^{-\gamma}t)+f_D(\eps^{-\gamma}t) }{ t-  z}d t \asymp \\
&
\eps^{\frac 1 2\gamma(1-\alpha_2)}\psi(0) \frac 1 {2\pi i} 
\int_0^\infty 
 \bigg(\frac {  N_{\alpha_2}^-(  t)}{
  X^-\big(t; \eps^\gamma \beta, \mu\big)
  } -  \frac{N_{\alpha_2}^+( t)}{
   X^+\big(t; \eps^\gamma \beta, \mu\big)
   }\bigg)\frac {t -\eps^\gamma \beta }{ t-  z}
d t = \\
&
\eps^{\frac 1 2\gamma(1-\alpha_2)}\psi(0) R\big(z; \eps^\gamma \beta, \mu\big),
\end{align*}
where $R(\cdot)$ was defined in \eqref{FFSD}. Consequently, cf. \eqref{Ifla},
\[
\int_0^\infty  \Big(N_{\alpha_2}^+(t)-N_{\alpha_2}^-(t)\Big)\widehat g(t)dt
\asymp \eps^{-\gamma \alpha_2} \psi(0) I(\eps^\gamma \beta, \mu), 
\]
and thus condition \eqref{cond1_alpha_small} implies $\psi(0) = o(\eps^{\gamma \alpha_2})$ as $\eps\to 0$.

The filtering error asymptotics is deduced from \eqref{PT_alpha_small} by similar calculations,  
\begin{align*}
&
P_T(\beta, \mu_\eps) \asymp\,  
 \eps^{ \gamma(2-\alpha_1)} \frac 1 {2\pi i}  
\int_0^\infty   \Big(N_{\alpha_2}^+( t)   - N_{\alpha_2}^-( t) \Big) \cdot \\
&
\bigg(
 \eps^{  -\frac \gamma 2(3-\alpha_1)}(t-\eps^\gamma \beta)\frac 1 \mu \frac{ 
 X^+\big(t; \eps^\gamma \beta, \mu\big) 
}{
 \Lambda^+\big(t; \eps^\gamma \beta, \mu\big)
} \Big(F_S^+(\eps^{-\gamma}t) -F_D^+(\eps^{-\gamma}t)\Big)
-      
\frac{N_{\alpha_1}^+( t)}{\Lambda^+\big(t; \eps^\gamma \beta, \mu\big)}
\bigg)dt.
\end{align*}
Here, cf. \eqref{Qz}, 
\begin{align*}
&
 F_S (\eps^{-\gamma}z) -F_D^+(\eps^{-\gamma}z)  =
\frac 1{2\pi i} \int_0^\infty \frac {f_S(\eps^{-\gamma}t)-f_D(\eps^{-\gamma}t)} {t-z}  dt =\\
&
\eps^{\frac 1 2 + \frac 1  2 \gamma (1-\alpha_2)}\frac 1{2\pi i} \frac 1 \mu\int_0^\infty 
\Big(\frac {N_{\alpha_2}^+( t)}
{  X^+\big(t; \eps^\gamma \beta, \mu\big)} -  \frac{N_{\alpha_2}^-( t)}{
 X^-\big(t; \eps^\gamma \beta, \mu\big)
}\Big) \frac {t +\eps^\gamma\beta} {t-z} dt,
\end{align*}
and consequently   
\begin{align*}
&
P_T(\beta, \mu_\eps)     \asymp\, 
\eps^{ \gamma(2-\alpha_1)}
 \frac 1 {2\pi i}  
\int_0^\infty   \Big(N_{\alpha_2}^+( t)   - N_{\alpha_2}^-( t) \Big) \cdot \\
&
  \bigg(
  (t-\eps^\gamma \beta)\frac 1 \mu \frac{ 
 X^+\big(t; \eps^\gamma \beta, \mu\big) 
}{
 \Lambda^+\big(t; \eps^\gamma \beta, \mu\big)
}  Q(t;\eps^{\gamma }\beta, \mu)
-      
\frac{N_{\alpha_1}^+( t)}{\Lambda^+\big(t; \eps^\gamma \beta, \mu\big)}
\bigg)dt  
\asymp\   \eps^{ \gamma(2-\alpha_1)} P_\infty(0, \mu),
\end{align*}
where $P_\infty(\cdot)$ is exactly the function obtained in \eqref{Pinfty_expr}. 
This proves the asymptotics claimed in \eqref{small-noise}.

\section{Proof of Theorem \ref{thm:1}}\label{sec:case1}
In this section we derive the large time limit \eqref{KlLBfla}, from which small noise asymptotics \eqref{KlLBfla_small_noise} 
follows by Theorem \ref{thm:main} in the obvious way.

\subsection{The equivalent problem} 
For $\alpha_1=\alpha_2 =: \alpha\in (0,2)$,  expression \eqref{gTz_expr} for the Laplace transform  reduces to
\begin{equation}\label{hatg_equal_alphas}
\widehat g(z) =   -\frac{z+\beta}{z^2-t_0^2} \frac{\Phi_0(z)+e^{-zT} \Phi_1(-z)}{  N_{\alpha}(z)}
-  \frac{\mu_\eps^2 }{z^2-t_0^2}\Big(\psi(0)+\frac {1}{\mu_\eps}e^{-zT}\Big),
\end{equation}
where $t_0^2 = \beta^2+\mu_\eps^2$, cf. \eqref{Lambdaz}. In this case,
\[
\frac{\Lambda^+(t)}{\Lambda^-(t)} = \frac{N_{\alpha}^+(t)}{ N_{\alpha}^-(t) } = 
e^{(1-\alpha)\pi i}, \quad t\in \Real, 
\]
and equations \eqref{eq:6.11} simplify to
\[
\begin{aligned}
& 
\Phi_0^+(t)   -e^{(1-\alpha)\pi i}  \Phi_0^-(t)
=
e^{-tT} \Phi_1(-t)\Big(e^{(1-\alpha)\pi i} -1\Big), \\
&
\Phi_1^+(t)   - e^{(1-\alpha)\pi i}\Phi_1^-(t)
=
e^{-tT}\Phi_0(-t)\Big( e^{(1-\alpha)\pi i}
-
1\Big),
\end{aligned}  \qquad t\in \Real_+.
\]
The sectionally holomorphic function in \eqref{Xz} reduces to 
\begin{equation}\label{Xzalpha}
X(z) = (-z)^{\frac {\alpha-1} 2},\quad z\in \mathbb C\setminus \Real_+,
\end{equation}
with the constant jump across the real line
\[
\frac{X^+(t)}{X^-(t)} =
e^{(1-\alpha)\pi i}, \quad t\in \Real_+.
\]

In this case the functions defined in \eqref{SzDz} satisfy, cf. \eqref{decoupled},  
\[
\begin{aligned}
S^+(t) - S^-(t) & = \phantom{+ } 2i e^{-tT} h  S(-t),
\\
D^+(t) - D^-(t) & = -  2i e^{-tT} h  D(-t),
\end{aligned}\qquad t\in \Real_+,
\]
with the constant $h= \sin \big(\tfrac {1-\alpha} 2\pi\big)$.
In view of estimates \eqref{apriori_est} and expression \eqref{Xzalpha},  functions $S(z)$ and $D(z)$ grow sublinearly 
as $z\to \infty$ and their restrictions to negative reals are (square) integrable near the origin. 
Consequently, by the Sokhotski-Plemelj theorem, cf. \eqref{eq:6.13},
\begin{align*}
S(z) = & \phantom{+\ }\frac 1 \pi \int_0^\infty \frac {e^{-tT} h}{t-z}S(-t)dt + k^S_0 \\
D(z) = & -\frac 1 \pi \int_0^\infty \frac {e^{-tT} h}{t-z}D(-t)dt + k^D_0,
\end{align*}
where $k^S_0$ and $k^D_0$ are some constants, yet to be determined. 
The relevant auxiliary integral equations in this case are
\[
\begin{aligned}
p_0(t) = & \phantom{+\ } \frac 1 \pi \int_0^\infty \frac {e^{-\tau T}h}{\tau+t}p_0(\tau)d\tau + 1,
\\
q_0(t) = & -\frac 1 \pi \int_0^\infty \frac {e^{-\tau T} h}{\tau+t}q_0(\tau)d\tau + 1,
\end{aligned}  \qquad t\in \Real_+.
\]
They have unique solutions, such that $A p_0, Aq_0\in L^2(\Real_+)$ and, by linearity, 
$
S(z) = k^S_0 p_0(-z) 
$
and 
$
D(z) = k^D_0 q_0(-z),
$
so that, cf. \eqref{PhiPhi},
\begin{equation}\label{PhicScD} 
\begin{aligned}
\Phi_0(z) = & X(z) \Big(k^S_0 p_0(-z)+k^D_0 q_0(-z)\Big),   \\
\Phi_1(z) = & X(z) \Big(k^S_0 p_0(-z)-k^D_0 q_0(-z)\Big).
\end{aligned}
\end{equation}

Substituting these formulas into \eqref{hatg_equal_alphas}, we obtain an expression for the Laplace transform, which depends 
on the unknown constants $\psi(0)$, $k^S_0$ and $k^D_0$. These constants can be found from the linear algebraic system, 
consisting of \eqref{cond1} and the two additional equations, obtained by the poles removal in \eqref{hatg_equal_alphas},
\begin{equation}\label{more_eq}
\begin{aligned}
& 
(t_0+\beta) \frac{\Phi_0^+(t_0)+e^{-t_0T} \Phi_1(-t_0)}{N_{\alpha}^+(t_0)}
+  \mu_\eps^2   \Big(\psi(0)+\frac {1}{\mu_\eps}e^{-t_0T}\Big)=0, \\
&
(t_0-\beta) \frac{e^{-t_0 T} \Phi_0(-t_0)+ \Phi_1^-(t_0)}{N_{\alpha}^-(t_0)}
-   \mu_\eps^2  \Big(e^{-t_0 T}\psi(0)+\frac {1}{\mu_\eps} \Big)=0.
\end{aligned}
\end{equation} 
Once this system is solved, the Laplace transform $\widehat g(z)$ becomes completely specified and the filtering error can  
be computed by means of equation \eqref{PTfla}.
  
\subsection{Large time limit $\boldsymbol{\alpha \in (0,1)}$}  
The main element of the asymptotic analysis is the estimates similar to \eqref{asym_est},
\[
\big|p_0(z) - 1\big|\le C \frac 1 z \frac  1 T,\quad \big|q_0(z) - 1\big|\le C \frac 1 z \frac  1 T.
\]
Due to these bounds and equations \eqref{PhicScD}, conditions \eqref{more_eq} simplify  as $T\to \infty$ to 
\begin{equation}\label{more_eq_asymp}
\begin{aligned}
& 
(t_0+\beta)  \frac{X^+(t_0)}{N_{\alpha}^+(t_0)} k^S_0   +(t_0+\beta) \frac{X^+(t_0)   }{N_{\alpha}^+(t_0)} k^D_0  
+  \mu_\eps^2   \psi(0) \asymp 0, \\
&
 (t_0-\beta) \frac{ X^-(t_0)}{N_{\alpha}^-(t_0)} k^S_0 - (t_0-\beta)\frac{ X^-(t_0)}{N_{\alpha}^-(t_0)} k^D_0 
  \asymp     \mu_\eps.
\end{aligned}\end{equation} 
Further calculations are carried out somewhat differently, depending on the values of $\alpha$, as explained in Remark \ref{rem:4.2}.

Let us first consider the case $\alpha\in (0,1)$.
The restriction of $\widehat g(z)$ to the real line, needed in 
\eqref{cond1_alpha_small}, is found by taking the limit $z\to t\in \Real_+$ in \eqref{hatg_equal_alphas}, 
either in the upper or lower half planes. By subtracting from $\widehat g(t)$ the first equation in \eqref{more_eq}, 
plugging the result into \eqref{cond1_alpha_small} and taking $T\to\infty$ we obtain the asymptotics 
\begin{equation}\label{eq:5.5}
\begin{aligned}
&
\frac 1 {2\pi i}\int_0^\infty  \Big(N_{\alpha_2}^+(t)-N_{\alpha_2}^-(t)\Big)\widehat g(t)dt  \asymp\\
&
(k^S_0+k^D_0)\frac 1 {2\pi i}\int_0^\infty   
\frac{N_{\alpha}^+(t)-N_{\alpha}^-(t)}{t^2-t_0^2}\Big(
(t_0+\beta) \frac{X_0^+(t_0)}{N_{\alpha}^+(t_0)}- (t+\beta)\frac{X_0^+(t) }{  N_{\alpha}^+(t)}
\Big)
dt.
\end{aligned}
\end{equation}
The latter integral is well defined, since  singularity at $t_0$ is integrable, and it does not vanish for all $\alpha\in (0,1)$. 
Therefore \eqref{cond1_alpha_small} implies that $k^S_0 + k^D_0 \to 0$ as $T\to\infty$ and, due to \eqref{more_eq_asymp}, we also 
have $\psi(0)\to 0$ and 
\begin{equation}\label{cScDlim}
k^S_0 -k^D_0 \xrightarrow[T\to\infty]{}  \frac {\mu_\eps}{t_0-\beta} \frac{N_{\alpha}^-(t_0)} { X^-(t_0)}.
\end{equation}

The expression \eqref{KlLBfla} can now be derived by using \eqref{PT_alpha_small}:
\begin{equation}\label{oneway}
\begin{aligned}
& 
P_\infty(\beta,\mu_\eps) \stackrel{(\mathrm{a})}{\asymp}  \\
& 
\frac {k^S_0-k^D_0} {\mu_\eps} \frac 1 {2\pi i}  \int_0^\infty   
\frac{N_{\alpha}^+(t)   - N_{\alpha}^-(t)}{t^2-t_0^2} \Big( (t-\beta)\frac{ X^-(t) }{  N_{\alpha}^-(t)}-(t_0-\beta) \frac{X^-(t_0)}{N_{\alpha}^-(t_0)}\Big) dt \stackrel{(\mathrm{b})}{\asymp}\\
&
\frac 1 {2\pi i}  \int_0^\infty   
\frac{N_{\alpha}^+(t)   - N_{\alpha}^-(t)}{t^2-t_0^2} \Big( \frac{t-\beta}{t_0-\beta}\frac{N_{\alpha}^-(t_0)} { X^-(t_0)} \frac{ X^-(t) }{  N_{\alpha}^-(t)}-1\Big) dt \stackrel{(\mathrm{c})}{=}\\
& 
\kappa_\alpha \frac {\cos \frac \alpha 2 \pi} { \pi }  \int_0^\infty   
\frac{t^{\alpha-1}}{t^2-t_0^2} \Big( \frac{t-\beta}{t_0-\beta}(t/t_0)^{\frac {1-\alpha} 2}-1\Big) dt \stackrel{(\mathrm{d})}{=} \\
& 
 \frac{\Gamma(3-\alpha)} 2t_0^{\alpha-2}
\Big(1+\sin (\tfrac \alpha 2 \pi)\frac{t_0+\beta}{t_0-\beta}\Big),
\end{aligned}
\end{equation}
where (a) is obtained by plugging $e^{-Tt}\widehat g(-t)$ from \eqref{hatg_equal_alphas} and subtracting 
the second equation from \eqref{more_eq}, the limit (b) holds due to \eqref{cScDlim}, equality (c) follows by
by substitution of the explicit formulas from \eqref{Nalpha} and \eqref{Xzalpha} and (d) is computed by the 
standard contour integration and simplified using elementary trigonometry.

\subsection{Large time limit $\boldsymbol{\alpha \in (1,2)}$}
In view of \eqref{hatg_equal_alphas} and \eqref{PhicScD}, the first term in the brackets in \eqref{cond1} satisfies
\begin{equation}\label{eq:5.7}
\begin{aligned}
z  N_{\alpha}(z) \widehat g(z) = & -   \Phi_0(z)  + O(z^{\alpha-2}) = 
-X(z) \big(k^S_0  +k^D_0 \big) + O(z^{\alpha-2})= \\
&
-(-z)^{\frac {\alpha-1} 2}\big(k^S_0  +k^D_0 \big) + O(z^{\alpha-2}), \quad \text{as\ \ }  \Re(z)\to\infty.
\end{aligned}
\end{equation}
Similarly to \eqref{eq:5.5}, the second term satisfies, as $T\to\infty$, 
\begin{align*}
&
\frac 1 {2\pi i} \int_0^\infty\frac 1{t-z} \big(N_{\alpha}^+(t)-N_{\alpha}^-(t)\big)
\widehat g(t)dt \asymp\\
&
\frac {k^S_0+k^D_0} {2\pi i} \int_0^\infty\frac {N_{\alpha}^+(t)-N_{\alpha}^-(t)}{t-z} 
   \frac{1 }{t^2-t_0^2} \Big( (t_0+\beta)  \frac{X^+(t_0)}{N_{\alpha}^+(t_0)}  
-  (t+\beta)
\frac{X^+(t) }{  N_{\alpha}^+(t)}\Big) dt.
\end{align*}
The latter integral can be written as the sum of three parts,
\begin{align*}
J_1(z):= &\phantom{+\ }
\frac 1 {2\pi i} \int_0^\infty\frac {N_{\alpha}^+(t)-N_{\alpha}^-(t)}{t-z} 
\frac{t_0+\beta }{t^2-t_0^2} 
\Big(   \frac{X^+(t_0)}{N_{\alpha}^+(t_0)} - \frac{X^+(t) }{  N_{\alpha}^+(t)} \Big) dt,
\\
J_2(z):= &\phantom{+\ }
\frac 1 {2\pi i} \int_0^\infty\frac {N_{\alpha}^+(t)-N_{\alpha}^-(t)}{t-z}\frac{t_0 }{t(t +t_0)}  \frac{X^+(t) }{  N_{\alpha}^+(t)}dt, \
\\
J_3(z):= &-
\frac 1 {2\pi i} \int_0^\infty\frac {N_{\alpha}^+(t)-N_{\alpha}^-(t)}{t-z} \frac 1 t \frac{X^+(t) }{  N_{\alpha}^+(t)}dt,
\end{align*}
where both $zJ_1(z)$ and $zJ_2(z)$ converge to finite limits as $z\to\infty$ and  
\begin{align*}
-z J_3(z) = 
& z\frac 1 {2\pi i} \int_0^\infty\frac {N_{\alpha}^+(t)-N_{\alpha}^-(t)}{t-z} \frac 1 t \frac{X^+(t) }{  N_{\alpha}^+(t)}dt =\\
&
 -z\frac {\sin (\tfrac{\alpha-1}{2}\pi)} { \pi  } \int_0^\infty\frac {t^{\frac {\alpha-1}2-1} }{t-z}  dt =  (-z)^{\frac {\alpha-1}{2} }.
\end{align*}
This term cancels out with \eqref{eq:5.7} in the limit \eqref{cond1}, which  therefore, takes the form 
\[
\big(k^S_0  +k^D_0 \big)\lim_{\Re(z)\to\infty}  
\Big(zJ_1(z)+zJ_2(z) \Big)
=0.
\]
A calculation shows that the limit here remains non-zero for all $\alpha\in (1,2)$. Consequently, $k^S_0+k^D_0\to 0$ as $T\to\infty$ and
 \eqref{cScDlim} remains true. 
Similarly, the first term in \eqref{PTfla} has the asymptotics
\[
z  N_{\alpha}(-z) e^{-zT}\widehat g(-z) =  (-z)^{\frac {\alpha-1} 2} (k^S_0-k^D_0)
+ O(z^{\alpha-2}), \quad z\to\infty,
\]
which compensates the leading order term in the integral. Hence we obtain 
\begin{align*}
& P_T (\beta, \mu_\eps)  \asymp \\
&
(k^S_0 -  k^D_0)\frac {t_0-\beta} {\mu_\eps}     
\frac {1 } {2\pi i}  \int_0^\infty \Big(N_{\alpha}^+(t)   - N_{\alpha}^-(t)\Big)  
\frac{ 1}{t^2-t_0^2}\Big(\frac{   X^+(t)}{  N_{\alpha}^+(t)}-   \frac{ X^+(t_0)}{N_{\alpha}^+(t_0)}\Big) dt \\
&
 -(k^S_0 -  k^D_0)\frac {t_0} {\mu_\eps} \frac {1 } {2\pi i}  \int_0^\infty \Big(N_{\alpha}^+(t)   - N_{\alpha}^-(t) \Big)
 \frac{ 1}{t(t+t_0)} \frac{   X^+(t)}{  N_{\alpha}^+(t)}  dt, \quad  T\to\infty.
\end{align*}
Upon substitution of \eqref{Nalpha} and \eqref{Xzalpha} these integrals can be evaluated explicitly by 
means of standard contour integration. Then plugging the limit \eqref{cScDlim} and simplifying the obtained 
trigonometric formulas, we arrive at the very same expression, derived in \eqref{oneway}.

\section{Proof of Theorem \ref{thm:3}}\label{sec:case3}

\subsection{The equivalent problem} 
The equivalent problem in Subsection \ref{sec:equiv} was derived for $\alpha_2 \in (0,1)$, and in the 
complementary case $\alpha_2 \in (1,2)$ it takes a somewhat different form.  
The function $\theta(t)=\arg\big(\Lambda^+(t)\big)$ is now negative with 
$
\theta(0+) = -\pi
$
and
$
\theta(\infty)= \frac {1-\alpha_2}{2}\pi,
$
and, consequently, in view of estimates \eqref{apriori_est}  and \eqref{Xzest}, the appropriate choice of the 
factor in \eqref{Xz} is $k=-1$. The functions $S(z)$ and $D(z)$, defined in \eqref{SzDz}, grow at most linearly 
as $z\to\infty$ and, therefore,
\begin{equation}\label{SSDD}
\begin{aligned}
S(z) = & - k^S_1 p_1(-z) + k^S_0 p_0(-z) + p(-z), \\
D(z) = & - k^D_1 q_1(-z) + k^D_0 q_0(-z) + q(-z),
\end{aligned}
\end{equation}
where $p_j(z), q_j(z)$ and $p(z), q(z)$ are solutions to  auxiliary integral equations 
\eqref{qqpp} and \eqref{pqeq}, respectively. Combining \eqref{SzDz} with \eqref{SSDD} yields the 
expressions for $\Phi_0(z)$ and $\Phi_1(z)$ and, in turn, for the Laplace transform $\widehat g(z)$ in \eqref{gTz_expr},
specified up to unknown constants $k^S_j$, $k^D_j$ and $\psi(0)$.
These constants are found using \eqref{cond1} and the conditions, implied by removal of the poles, 
\begin{equation}\label{cond_poles}
\begin{aligned}
&  (z_0+\beta) \Big(\Phi_0(z_0)+e^{-z_0T} \Phi_1(-z_0)\Big) +  \mu_\eps^2   
\Big(\psi(0)+\frac {1}{\mu_\eps}e^{-z_0T}\Big)=0,\\
&
  (z_0-\beta) \Big(\Phi_1(z_0)+e^{-z_0T}\Phi_0(-z_0)\Big)  -  \mu_\eps^2  
\Big(\frac {1}{\mu_\eps}+ e^{- z_0T}\psi(0)\Big) =0.
\end{aligned}
\end{equation}
Finally, the limit filtering error can be computed using \eqref{PTfla}.

\subsection{Large time limit $\boldsymbol{\alpha \in (0,1)}$}
Let $\alpha_1=1$ and $\alpha_2=\alpha\in (0,1)$.
Our starting point is the expression for the limiting error \eqref{Pinfty_expr}. Using the special form of the structural function
in this case,
\begin{equation}\label{spLambda}
\Lambda(z) = (z^2-\beta^2) N_\alpha(z)-\mu_\eps^2,
\end{equation}
and property \eqref{homo},  the integral in \eqref{Qz} simplifies to 
\begin{align*}
Q(z) = 
&
   \frac 1 { 2\pi i} \frac 1 {\mu_\eps}\int_0^\infty \frac {t +\beta }{t-z}
\Big(\frac{N_{\alpha}^+(t)}{X^+(t)}-\frac {N_{\alpha}^-(t)}{X^-(t)}\Big) dt = \\
&
\mu_\eps  \frac {1} { 2\pi i}\int_0^\infty \frac {1}{t-z}\frac {1 }{t -\beta}
\Big(\frac{ 1}{X^+(t)}-\frac { 1}{X^-(t)}\Big) dt = 
 \frac {\mu_\eps} {z-\beta} \Big(H(z)-H(\beta)\Big),
\end{align*}
where we defined 
\begin{equation}\label{Hz}
H(z):= \frac 1 { 2\pi i} \int_0^\infty \ \frac{1} {t-z}  
\Big(\frac{ 1}{X^+(t)}-\frac { 1}{X^-(t)}\Big) dt,
\end{equation}
and $H(\beta)$ stands for the common value of $H^+(\beta)=H^-(\beta)$. These two limits coincide, since
$\Lambda(z)$ in \eqref{spLambda} satisfies $\Lambda^+(\beta)=\Lambda^-(\beta)$ and hence, in view of \eqref{homo},  
\[
X^+(\beta)=X^-(\beta)=: X(\beta)\in \Real, \quad \beta >0. 
\]

By the Sokhotski-Plemelj theorem 
\[
H^+(t)-H^-(t)=\frac 1 {X^+(t)}-\frac 1 {X^-(t)}, \quad t\in \Real_+,
\]
and hence  $H(z)-1/X(z)$ is an entire function.  
Since it vanishes as $z\to \infty$, it must coincide with the zero function and hence
\begin{equation}\label{Qexpr}
Q(z)=\frac {\mu_\eps} {z-\beta} \Big(\frac 1 {X(z)}-\frac 1{X(\beta)}\Big).
\end{equation}
Plugging this formula into \eqref{Pinfty_expr} yields 
\begin{equation}\label{plugme}
P_\infty(\beta, \mu_\eps) =
 -\frac 1{X(\beta)} \frac 1 {2\pi i}  \int_0^\infty   \Big(N_{\alpha}^+(t)   - N_{\alpha}^-(t) \Big)  
    \frac{   X^+(t) }{\Lambda^+(t)}     
dt.    
\end{equation}
Further simplification is possible due to the following lemmas.

\begin{lem}\label{lem:Xz}
\begin{equation}\label{ident1}
X(z)X(-z) = -\frac1 {\kappa_\alpha}\Lambda(z), \quad z\in \mathbb{C}\setminus \Real.
\end{equation}
\end{lem}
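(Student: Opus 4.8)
The plan is to prove \eqref{ident1} by a Liouville-type argument: I will show that the quotient
$$R(z) := \frac{X(z)X(-z)}{\Lambda(z)}$$
extends to a bounded entire function of $z\in\mathbb{C}\setminus\{0\}$, hence a constant, and then identify that constant as $-1/\kappa_\alpha$. The structure that makes this work is that $X(z)$ is, by construction \eqref{homo}, a nonvanishing solution of the homogeneous Hilbert problem with jump $\Lambda^+/\Lambda^-$ across $\Real_+$, while $\Lambda(z)$ carries the same multiplicative jump trivially; so their quotient should be free of discontinuity.

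First I would analyze the jump of $\Phi(z):=X(z)X(-z)$ across the real line. Because $X$ is sectionally holomorphic on $\mathbb{C}\setminus\Real_+$, the factor $X(-z)$ is holomorphic across $\Real_+$ (its argument $-t$ lies off the cut), so across $\Real_+$ only $X(z)$ contributes a jump and $\Phi^+(t)/\Phi^-(t)=X^+(t)/X^-(t)=\Lambda^+(t)/\Lambda^-(t)$ by \eqref{homo}. Across $\Real_-$ the roles reverse: for $t=-s$ with $s>0$ only $X(-z)$ jumps, giving $\Phi^+(-s)/\Phi^-(-s)=X^-(s)/X^+(s)=\Lambda^-(s)/\Lambda^+(s)$, and the reflection identity $\Lambda^+(t)=\Lambda^-(-t)$ from \eqref{Lambdasym} shows that $\Lambda^+(-s)/\Lambda^-(-s)$ equals the same ratio. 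Thus $R$ has matching continuous boundary values from the two sides of $\Real\setminus\{0\}$, and since it is holomorphic and nonvanishing on each half-plane (neither $X$ nor, for $\alpha_1>\alpha_2$, $\Lambda$ vanishes, cf.\ Lemma \ref{lem:4.2}), it continues to a function holomorphic on $\mathbb{C}\setminus\{0\}$.

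Next I would bound $R$ at the two remaining points. Using \eqref{Xzest} with $k=1$, $\theta(\infty)=\tfrac{1-\alpha}{2}\pi$ and $\theta(0+)=\tfrac{1-\alpha}{2}\pi+\pi$, the product $X(z)X(-z)$ is of order $|z|^{1+\alpha}$ as $z\to\infty$ and of order $|z|^{\alpha-1}$ as $z\to 0$. The explicit form \eqref{spLambda} gives $\Lambda(z)\sim\kappa_\alpha z^{\alpha+1}$ at infinity and $\Lambda(z)\sim-\beta^2\kappa_\alpha(z/i)^{\alpha-1}$ near the origin, i.e.\ exactly the same orders. Hence $R$ is bounded near $0$ and near $\infty$, the origin is a removable singularity, and Liouville's theorem forces $R$ to be constant.

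Finally I would pin down the constant by letting $z\to\infty$ along a ray in the upper half-plane, where the Cauchy-integral exponential in \eqref{Xz} tends to $1$ and only the power factors $(-z)^{(1+\alpha)/2}$, $z^{(1+\alpha)/2}$ and the leading term $z^2(z/i)^{\alpha-1}$ of $\Lambda$ survive. Writing $z=re^{i\phi}$ and collecting arguments, the $\phi$-dependence cancels and the surviving phase is $e^{-i\pi}$, so that $R\equiv-1/\kappa_\alpha$, which is \eqref{ident1}. The main obstacle is precisely this last phase bookkeeping: one must keep the branch choices of the several fractional powers mutually consistent so that the constant emerges as exactly $-1/\kappa_\alpha$, with the correct sign; once the branches are fixed, the jump-cancellation and Liouville steps are routine.
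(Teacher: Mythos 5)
Your proof is correct, but it follows a genuinely different route from the paper's. The paper establishes \eqref{ident1} the same way as Lemma \ref{lem:XzXz}: it writes the exponent of $X(z)X(-z)$ as a Cauchy integral of $\theta(t)-\theta(\infty)$ over the whole real line, converts that into integrals of the logarithm of a suitably normalized $\Lambda$, and evaluates these by deforming contours around the branch cuts of the logarithm (the level curves on which the normalized $\Lambda$ is negative real). You instead argue by uniqueness: the quotient $R(z)=X(z)X(-z)/\Lambda(z)$ has matching boundary values across $\Real_+$ by \eqref{homo} and across $\Real_-$ by the reflection symmetry \eqref{Lambdasym}, it is holomorphic on $\mathbb{C}\setminus\{0\}$ because $\Lambda$ has no zeros when $\alpha_1>\alpha_2$ (Lemma \ref{lem:4.2}), and it is bounded at $0$ and at $\infty$ because the exponents $2(k-\theta(0+)/\pi)=\alpha-1$ and $2(k-\theta(\infty)/\pi)=\alpha+1$ from \eqref{Xzest} coincide with the orders of $\Lambda(z)=(z^2-\beta^2)N_\alpha(z)-\mu_\eps^2$ at those two points; Liouville then yields a constant, and your phase bookkeeping is right --- the $\phi$-dependence cancels and the residual phase is $e^{-i\pi(1+\alpha)/2}e^{i\pi(\alpha-1)/2}=e^{-i\pi}$, giving $-1/\kappa_\alpha$. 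Your route buys the complete avoidance of the geometric analysis of the curves where the normalized $\Lambda$ crosses $\Real_-$, which is the most delicate part of the paper's computation; the paper's route has the advantage that, in the harder setting of Lemma \ref{lem:XzXz}, the contour deformation automatically produces the rational factors attached to the zeros $\pm z_0,\pm\overline z_0$ --- though your argument extends there too, by allowing $R$ to be a rational function with poles at those zeros, determined by matching at $0$ and $\infty$. The one step worth making explicit in your write-up is that near the origin $\Lambda(z)=-\beta^2\kappa_\alpha(z/i)^{\alpha-1}\big(1+o(1)\big)$ is a genuine asymptotic equivalence rather than a mere upper bound, so that $|\Lambda|$ is bounded below by the matching power and the singularity of $R$ at $0$ is indeed removable.
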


\begin{proof}
By definition \eqref{Xz} with $k=1$,
\begin{equation}\label{Upsi}
X(z)X(-z)=
(-z)^{1-\theta(\infty)/\pi}z^{1-\theta(\infty)/\pi}\exp \big(\Upsilon(z)\big),
\end{equation}
where  
\[
\Upsilon(z) := \frac 1 \pi \int_0^\infty \frac {\theta(t)-\theta(\infty)}{t-z}dt+
 \frac 1 \pi \int_0^\infty \frac {\theta(t)-\theta(\infty)}{t+z}dt.
\]
Define the function, cf. \eqref{spLambda},
\[
\widetilde \Lambda(z) :=\frac{\Lambda(z)}{z^2N_\alpha(z)} = 
1-\beta^2 z^{-2}  -\frac{\mu_\eps^2}{z^2N_\alpha(z)}.
\]
Since $\arg\big(N_\alpha^+(t)\big) = \theta(\infty)$ and in view of symmetries \eqref{Nsym} and \eqref{Lambdasym},   
\[
\theta(t)-\theta(\infty) =\arg\big(\widetilde \Lambda^+(t)\big) = \frac 1 {2i} \log \frac{\widetilde \Lambda^+(t)}{\widetilde \Lambda^-(t)} =: \widetilde \theta(t).
\]
The angle function $\widetilde \theta(t)$ is odd, $\widetilde \theta(t)=-\widetilde \theta(-t)$, and hence we can write 
\begin{align*}
\Upsilon(z) =\, & \frac 1 \pi \int_0^\infty \frac {\widetilde \theta(t) }{t-z}dt+
 \frac 1 \pi \int_0^\infty \frac {\widetilde \theta(t)}{t+z}dt = 
 \frac 1 \pi \int_{-\infty}^\infty \frac {\widetilde \theta(t) }{t-z}dt =\\
& 
\frac 1 {2\pi i} \int_{-\infty}^\infty \frac {\log \widetilde \Lambda^+(t)}{t-z}dt 
-
\frac 1 {2\pi i} \int_{-\infty}^\infty \frac {\log \widetilde \Lambda^-(t)}{t-z}dt.
\end{align*}
The latter integrals are well defined since $\log \widetilde \Lambda^\pm(t)=O(|t|^{-1-\alpha})$ as $|t|\to\infty$.
An elementary calculation shows that $\widetilde \Lambda(z)$ does not cross the branch cut of the logarithm for any $z\in \mathbb C\setminus \Real$. Hence these integrals can be computed by integrating the function 
$f(\zeta)=\log \widetilde \Lambda(\zeta)/(\zeta-z)$ over the circular arcs in the lower and upper half planes.
Standard residue calculus then shows that  
$
\Upsilon(z) = \log \widetilde \Lambda (z),
$
and the claimed formula is obtained by plugging this into \eqref{Upsi}. 
\end{proof}

\noindent
The large time limit \eqref{showme_T} for $H>\frac 1 2$ follows from \eqref{plugme} and the following 
expression.

\begin{lem}
\[
\frac 1 {2\pi i}  \int_0^\infty   \Big(N_{\alpha}^+(t)   - N_{\alpha}^-(t) \Big)  
    \frac{   X^+(t) }{\Lambda^+(t)}     
dt = 
-\frac { \mu_\eps^2 } {\kappa_\alpha}\frac 1 {2\beta}    
\Big(\frac {1 } {X(\beta)}-\frac {1 } {X(-\beta)} \Big).
\]
\end{lem}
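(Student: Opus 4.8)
The plan is to read the integrand as the jump across $\Real_+$ of a single sectionally holomorphic function and then transport that jump to the negative semi-axis, where the symmetries \eqref{Nsym}, \eqref{Lambdasym} and Lemma~\ref{lem:Xz} reduce everything to the Cauchy representation $1/X(z)=\frac1{2\pi i}\int_0^\infty\bigl(1/X^+(t)-1/X^-(t)\bigr)/(t-z)\,dt$ that was established just above in the discussion of \eqref{Hz}. Concretely, set $P(z):=N_\alpha(z)X(z)/\Lambda(z)$. By \eqref{spLambda} and Lemma~\ref{lem:4.2}(a), in the present regime $\alpha_1=1>\alpha_2=\alpha\in(0,1)$ the function $\Lambda(z)$ has no zeros, and $X(z)$ never vanishes; hence $P$ is holomorphic on $\mathbb{C}\setminus\Real$. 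Since $X^+(t)/\Lambda^+(t)=X^-(t)/\Lambda^-(t)$ for $t>0$ by \eqref{homo}, the combination $X/\Lambda$ is continuous across $\Real_+$, so the jump of $P$ there is carried entirely by $N_\alpha$, namely $P^+(t)-P^-(t)=\bigl(N_\alpha^+(t)-N_\alpha^-(t)\bigr)X^+(t)/\Lambda^+(t)$. Thus the left-hand side of the lemma equals $\frac1{2\pi i}\int_0^\infty\bigl(P^+-P^-\bigr)\,dt$.

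Next I would close the contour at infinity. Using the growth orders recorded in \eqref{apriori_est} and \eqref{Xzest} (with the power factor of $X$ fixed as in \eqref{Xzdef}), one checks that $P(z)=O\!\left(z^{(\alpha-3)/2}\right)$ as $z\to\infty$ and that $P$ has only an integrable singularity at the origin; since $(\alpha-3)/2<-1$ for $\alpha\in(0,1)$, the integral of $P$ over a large circle vanishes. Deforming that circle onto the two banks of the whole real axis gives $\int_{-\infty}^{\infty}\bigl(P^+-P^-\bigr)\,dt=0$, and therefore
\[
\tfrac1{2\pi i}\int_0^\infty\bigl(P^+-P^-\bigr)\,dt=-\tfrac1{2\pi i}\int_0^\infty\bigl(P^+(-s)-P^-(-s)\bigr)\,ds .
\]

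I would then evaluate the jump of $P$ on $\Real_-$. Substituting $N_\alpha^\pm(-s)=N_\alpha^\mp(s)$ and $\Lambda^\pm(-s)=\Lambda^\mp(s)$ from \eqref{Nsym}, \eqref{Lambdasym}, and using $X^+(-s)=X^-(-s)=X(-s)$ (as $X$ is holomorphic across $\Real_-$), a short simplification of $N_\alpha^-/\Lambda^- - N_\alpha^+/\Lambda^+$ makes the $(s^2-\beta^2)$ terms cancel and leaves $P^+(-s)-P^-(-s)=\mu_\eps^2\,X(-s)\,\bigl(N_\alpha^+(s)-N_\alpha^-(s)\bigr)/\bigl(\Lambda^+(s)\Lambda^-(s)\bigr)$. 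Applying Lemma~\ref{lem:Xz} in the boundary form $1/X^\pm(s)=-\kappa_\alpha X(-s)/\Lambda^\pm(s)$ shows that the jump $J(s):=1/X^+(s)-1/X^-(s)$ of $1/X$ equals $\kappa_\alpha(s^2-\beta^2)X(-s)\bigl(N_\alpha^+(s)-N_\alpha^-(s)\bigr)/\bigl(\Lambda^+(s)\Lambda^-(s)\bigr)$, so the two expressions differ only by a scalar:
\[
P^+(-s)-P^-(-s)=\frac{\mu_\eps^2}{\kappa_\alpha\,(s^2-\beta^2)}\,J(s).
\]
Inserting this, splitting $1/(s^2-\beta^2)=\tfrac1{2\beta}\bigl(1/(s-\beta)-1/(s+\beta)\bigr)$, and recognizing each Cauchy integral through $H(z)=1/X(z)$ (recorded below \eqref{Hz}, and used again in \eqref{Qexpr}) gives $\frac1{2\pi i}\int_0^\infty J(s)/(s+\beta)\,ds=1/X(-\beta)$ and $\frac1{2\pi i}\int_0^\infty J(s)/(s-\beta)\,ds=1/X(\beta)$, which assembles precisely into $-\frac{\mu_\eps^2}{\kappa_\alpha}\frac1{2\beta}\bigl(1/X(\beta)-1/X(-\beta)\bigr)$, the claimed value.

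The one genuinely delicate point, and the place I would spend the most care, is the last evaluation at the on-cut argument $s=\beta\in\Real_+$, where the kernel $1/(s-\beta)$ is singular. Here the argument rests on the observation made after \eqref{Qexpr} that $\Lambda$ is continuous at $\beta$: the factor $z^2-\beta^2$ in \eqref{spLambda} annihilates the $N_\alpha$ discontinuity, so $\Lambda^+(\beta)=\Lambda^-(\beta)=-\mu_\eps^2$ and hence $J(\beta)=0$. This makes $J(s)/(s-\beta)$ bounded near $\beta$, so the integral converges as an ordinary improper integral, and by the Sokhotski–Plemelj formula its value equals the common boundary limit $H^\pm(\beta)=1/X(\beta)$. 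The only other step needing attention is the decay estimate that legitimizes discarding the circle at infinity; once the power factor of $X$ is pinned down as above, the bound $P(z)=O(z^{(\alpha-3)/2})$ follows and everything else is routine simplification.
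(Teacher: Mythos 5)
Your proof is correct, but it is organized quite differently from the paper's. The paper first uses the explicit form of $N_\alpha^\pm$ together with Lemma \ref{lem:Xz} to rewrite the left-hand side as a constant multiple of $I=\frac{\sin(\pi\alpha)}{\pi}\int_0^\infty t^{\alpha-1}/X(-t)\,dt$, then converts $I$ into a jump integral of $t^{\alpha-1}/X$ over $\Real_+$, and finally extracts its value by solving an auxiliary Hilbert problem: it introduces $G(z)=(z^2-\beta^2)\big(F(z)+(-z)^{\alpha-1}/X(z)\big)$, computes its jump, applies Sokhotski--Plemelj to get $G(z)=2\mu_\eps^2\frac{\sin(\alpha\pi/2)}{\kappa_\alpha}X(z)^{-1}-Iz+C$, and pins down $I$ from the two conditions $G(\pm\beta)=0$. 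You instead read the integrand directly as the jump of $P=N_\alpha X/\Lambda$ across $\Real_+$, use the global vanishing of $\int_{\Real}(P^+-P^-)$ to transport the integral to $\Real_-$, identify the jump there with $\frac{\mu_\eps^2}{\kappa_\alpha(s^2-\beta^2)}\big(1/X^+(s)-1/X^-(s)\big)$ via Lemma \ref{lem:Xz}, and finish by partial fractions against the already-established representation $H(z)=1/X(z)$. Your route avoids the trigonometric bookkeeping and the extra Riemann--Hilbert step for $G$, at the price of having to justify two analytic facts that the paper's version does not need in this form: the decay $P(z)=O(z^{(\alpha-3)/2})$ at infinity (which you state and which is what makes the contour at infinity negligible) and the integrability of $P$ at the origin. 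The latter deserves one explicit line: since $\beta\neq 0$, $\Lambda^+(t)\sim-\beta^2N_\alpha^+(t)=O(t^{\alpha-1})$ as $t\to0+$, so $N_\alpha^+/\Lambda^+$ stays bounded and $P(z)=O(z^{(\alpha-1)/2})$ near the origin, which is integrable for every $\alpha\in(0,1)$; a hasty reading with $\Lambda^+(0+)=-\mu_\eps^2$ would wrongly suggest a non-integrable singularity for small $\alpha$. Your treatment of the on-cut evaluation at $s=\beta$ (using $J(\beta)=0$ so that the Cauchy integral is an ordinary improper integral equal to the common boundary value $1/X(\beta)$) is exactly the right point to be careful about and is handled correctly.
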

\begin{proof}
In view of \eqref{Nalpha} and identity \eqref{ident1}, 
this integral  equals $-\frac 1 2 \dfrac{1}{  \sin (\frac \alpha 2\pi  )}I$ with 
\[
I= \frac {\sin (\pi \alpha)} { \pi  }\int_0^\infty \frac{ t^{\alpha-1} }{X(-t)} dt. 
\]
Integrating the function $f(z) = z^{\alpha-1}/X(z)$ over semicircular contours in the upper and lower half-planes, 
applying Jordan's lemma and subtracting the results, we obtain an alternative expression 
\[
I = 
\frac 1 {2\pi i} \int_0^\infty t^{\alpha-1} \Big(\frac 1 {X^+(t)}-\frac 1 {X^-(t)}\Big)dt.
\]
This can also be viewed as the limit $I = -\lim_{z\to\infty} z F(z)$ for 
\[
F(z) := \frac 1 {2\pi i} \int_0^\infty \frac{t^{\alpha-1}}{t-z}\Big(\frac 1 {X^+(t)}-\frac 1 {X^-(t)}\Big)dt.
\]
Now define the sectionally holomorphic function
\[
G(z) := (z^2-\beta^2) \Big(F(z)+ \frac{(-z)^{\alpha-1}}{X(z)}\Big),  \quad z\in \mathbb{C}\setminus \Real_+.
\]
Its limits across the positive real semiaxis satisfy 
\begin{align*}
&
G^+(t)-G^-(t) = \,
  (t^2-\beta^2)\left(F^+(t)-F^-(t)  +  \frac{(e^{-\pi i }t)^{\alpha-1}}{X^+(t)}
-
\frac{(e^{\pi i}t)^{\alpha-1} }{X^-(t)}\right) =\\
&
 2\frac {\cos(  \tfrac{\alpha-1} 2\pi )} {\kappa_\alpha} \left(   \frac {(t^2-\beta^2)N_\alpha^+(t) } {X^+(t)} 
-\frac {(t^2-\beta^2)N_\alpha^-(t) } {X^-(t)}
\right) = \\
&
 2\frac {\sin(  \tfrac{ \alpha} 2\pi )} {\kappa_\alpha} \left(   \frac {\Lambda^+(t)+\mu_\eps^2 } {X^+(t)} 
-\frac {\Lambda^-(t)+\mu_\eps^2 } {X^-(t)}
\right)=  
2 \frac { \sin(  \tfrac{ \alpha} 2\pi )\mu_\eps^2} {\kappa_\alpha} \left(   \frac {1 } {X^+(t)} 
-\frac {1 } {X^-(t)}
\right),
\end{align*}
where the last equality holds by virtue of \eqref{homo}. 
Since $G(z)= -I z \big(1+o(1)\big)$ and $1/X(z)\to 0$ as $z\to\infty$, by the 
Sokhotski-Plemelj theorem 
\[
G(z)  = 2\mu_\eps^2 \frac {\sin(  \tfrac{ \alpha} 2\pi )} {\kappa_\alpha}   \frac {1 } {X(z)} - I z + C,
\]
with a constant $C$. By definition of $G(z)$ it must vanish at $\pm \beta$ and hence 
\[
I  =  \frac {\mu_\eps^2} {\beta}  \frac {\sin(  \tfrac{ \alpha} 2\pi )} {\kappa_\alpha}   
\Big( \frac {1 } {X(\beta)}-\frac {1 } {X(-\beta)} \Big).
\]
\end{proof}

\subsection{Small noise asymptotics $\boldsymbol{\alpha\in (0,1)}$}

The formula in \eqref{showme_eps} is obtained using \eqref{small-noise}, 
continuity of \eqref{showme_T} with respect to $\beta$ and the following limit.
\begin{lem}\label{lem:limbeta}
For $\eps=1$, 
\[
\lim_{\beta\to 0+}\frac 1 \beta \log \frac{X(-\beta)} {X(\beta)} =
   \frac{2 }{\sin \frac \pi {1+\alpha}} \left(\frac{\kappa_\alpha}{\mu^2}\right)^{\frac 1 {1+\alpha}}.
\]
\end{lem}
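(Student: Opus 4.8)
The plan is to reduce the ratio $X(-\beta)/X(\beta)$ to a single real integral of $\theta':=\frac{\partial}{\partial t}\theta(\,\cdot\,;\frac12,H)$, pass to the limit $\beta\to0^+$, and evaluate the resulting integral in closed form by a Mellin/contour computation. Throughout I write $\alpha:=2-2H\in(0,1)$, so that the structural function is $\Lambda(z)=(z^2-\beta^2)N_\alpha(z)-\mu^2$ (with $\eps=1$) and, by Lemma \ref{lem:4.2}(a), $\Lambda$ has no zeros.

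First I would record $\log X$ off the cut. Taking $k=1$ in \eqref{Xz}, the prefactor is $(-z)^{(1+\alpha)/2}=(-z)^{3/2-H}$ (cf. \eqref{Xzdef}) and $\theta(\infty)=\frac{1-\alpha}{2}\pi$, so for $z\notin\Real_+$
\[
\log X(z) = \tfrac{1+\alpha}{2}\log(-z) + \frac1\pi\int_0^\infty\frac{\theta(t)-\theta(\infty)}{t-z}\,dt .
\]
Evaluating at $z=-\beta$ (where $-z=\beta>0$ and the integral is ordinary) and at $z=\beta>0$ (where, as already noted in the text, $\Lambda^+(\beta)=\Lambda^-(\beta)=-\mu^2$ forces $X(\beta)\in\Real$, so $\log X(\beta)$ is the real part of the boundary value, with a principal-value integral), the prefactor $\frac{1+\alpha}{2}\log\beta$ cancels in the difference and leaves
\[
\log\frac{X(-\beta)}{X(\beta)} = -\frac{2\beta}{\pi}\,\mathrm{p.v.}\!\int_0^\infty\frac{\theta(t)-\theta(\infty)}{t^2-\beta^2}\,dt .
\]
Integrating by parts against $\frac1{t^2-\beta^2}=\frac1{2\beta}\frac{d}{dt}\log\big|\tfrac{t-\beta}{t+\beta}\big|$ — the boundary terms vanish since $\log\big|\tfrac{t-\beta}{t+\beta}\big|=0$ at $t=0$ and $t=\infty$ while $\theta(t)-\theta(\infty)$ stays bounded — this becomes the much cleaner expression
\[
\log\frac{X(-\beta)}{X(\beta)} = \frac1\pi\int_0^\infty\theta'(t)\,\log\Big|\frac{t-\beta}{t+\beta}\Big|\,dt .
\]

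Next I would divide by $\beta$ and send $\beta\to0^+$. Since $\tfrac1\beta\log\big|\tfrac{t-\beta}{t+\beta}\big|\to-\tfrac2t$ for each fixed $t>0$, and $\theta$ converges to its $\beta=0$ profile $\theta_0(t)=\arg\big(\kappa_\alpha t^{\alpha+1}e^{i\pi(1-\alpha)/2}-\mu^2\big)$, the expected value is $-\frac2\pi\int_0^\infty\theta_0'(t)/t\,dt$. The main obstacle is justifying this interchange: there is a genuine boundary layer at $t\sim\beta$, where $\theta$ swings through $\pi$ and its $\beta=0$ limit is discontinuous ($\theta(0+)=\frac{3-\alpha}{2}\pi$ for $\beta>0$, but $\theta_0(0+)=\pi$), while the kernel $\tfrac1\beta\log|\cdots|$ develops a logarithmic singularity there. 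I would control this by the rescaling $t=\beta\sigma$ on $(0,\sqrt\beta)$: there $\frac{d}{d\sigma}\theta(\beta\sigma)=O(\beta^{1+\alpha})$, so the boundary-layer contribution is $O(\beta^{1+\alpha}\log\tfrac1\beta)\to0$; on $(\sqrt\beta,\infty)$ the integrand converges with an integrable majorant, using $\theta_0'(t)=O(t^{\alpha})$ as $t\to0$ and $O(t^{-\alpha-2})$ as $t\to\infty$, so that $\theta_0'(t)/t$ is integrable. This yields
\[
\lim_{\beta\to0^+}\frac1\beta\log\frac{X(-\beta)}{X(\beta)} = -\frac2\pi\int_0^\infty\frac{\theta_0'(t)}{t}\,dt .
\]

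Finally I would evaluate this integral. Writing $\theta_0'(t)=\Im\frac{d}{dt}\log\Lambda_0^+(t)$ with $\Lambda_0^+(t)=\kappa_\alpha t^{\alpha+1}e^{i\pi(1-\alpha)/2}-\mu^2$ and substituting $u=t^{\alpha+1}$, $\nu=\frac1{1+\alpha}$, gives
\[
\int_0^\infty\frac{\theta_0'(t)}{t}\,dt = \Im\int_0^\infty\frac{u^{-\nu}}{u-c}\,du, \qquad c=\frac{\mu^2}{\kappa_\alpha}e^{-i\pi(1-\alpha)/2}.
\]
The standard formula $\int_0^\infty\frac{u^{-\nu}}{u-c}\,du=\frac{\pi}{\sin\pi\nu}(-c)^{-\nu}$ applies, and the phase collapses neatly: $\arg(-c)=\frac{1+\alpha}{2}\pi$ and $\nu\cdot\frac{1+\alpha}{2}\pi=\frac\pi2$, so $(-c)^{-\nu}=-i\,(\kappa_\alpha/\mu^2)^{1/(1+\alpha)}$ and the integral equals $-\frac{\pi}{\sin\frac\pi{1+\alpha}}(\kappa_\alpha/\mu^2)^{1/(1+\alpha)}$. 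Multiplying by $-\frac2\pi$ produces exactly the asserted value $\frac{2}{\sin\frac\pi{1+\alpha}}(\kappa_\alpha/\mu^2)^{1/(1+\alpha)}$, completing the proof.
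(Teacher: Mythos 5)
Your overall route is viable and genuinely different from the paper's. The reduction to $\log\frac{X(-\beta)}{X(\beta)}=\frac1\pi\int_0^\infty\theta'(t)\log\big|\frac{t-\beta}{t+\beta}\big|\,dt$ is correct, and your endgame — substituting $u=t^{1+\alpha}$ and invoking $\int_0^\infty u^{-\nu}(u-c)^{-1}du=\pi(-c)^{-\nu}/\sin\pi\nu$ with the phase collapsing to $e^{-i\pi/2}$ — is clean and yields the stated constant. The paper instead keeps $\beta$ fixed, writes $\widetilde\theta=\frac1{2i}\log(\widetilde\Lambda^+/\widetilde\Lambda^-)$, rotates the Cauchy integrals onto the imaginary axis where $\widetilde\Lambda$ is real, obtains an exact real log-integral valid for all $\beta>0$, and only then takes $\beta\to0$; your version trades that exact identity for a limit-interchange argument, and that is where the problem lies.

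The gap is the claim that $\frac{d}{d\sigma}\theta(\beta\sigma)=O(\beta^{1+\alpha})$ on $\sigma\in(0,\beta^{-1/2})$. This is false, and it contradicts your own observation: $\theta(0+)=\frac{3-\alpha}{2}\pi$ for every $\beta>0$ while $\theta(\sqrt\beta)\to\pi$, so the total variation of $\theta$ over $(0,\sqrt\beta)$ is $\frac{1-\alpha}{2}\pi+o(1)$, an order-one quantity; a uniform bound $O(\beta^{1+\alpha})$ on the rescaled derivative would force this variation to be $O(\beta^{\alpha+1/2})$. The swing of $\theta$ is in fact concentrated at the much smaller scale $t\asymp(\kappa_\alpha\beta^2/\mu^2)^{1/(1-\alpha)}=O(\beta^{2/(1-\alpha)})$, where $|(t^2-\beta^2)N_\alpha^+(t)|$ becomes comparable to $\mu^2$. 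The layer is still negligible, but for a different reason than you give: on it the kernel obeys $\frac1\beta\big|\log\big|\frac{t-\beta}{t+\beta}\big|\big|=O(t/\beta^2)$, so an order-one variation of $\theta$ supported on $t\lesssim\beta^{2/(1-\alpha)}$ contributes $O\big(\beta^{2\alpha/(1-\alpha)}\big)\to0$. You must also treat separately the logarithmic singularity at $t=\beta$, which sits inside your window $(0,\sqrt\beta)$; there $|\theta'|=O(\beta^{\alpha})$ and the rescaled kernel has $O(1)$ integral, contributing $O(\beta^{\alpha})$. With the layer estimate repaired along these lines (and your integrable majorant on $(\sqrt\beta,\infty)$, which is fine), the rest of the argument goes through and reproduces the paper's answer.
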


\begin{proof}
Let $\widetilde \theta(t):= \theta(t)-\theta(\infty)$ and note that $\widetilde \theta(t)=\arg\big(\widetilde \Lambda^+(t)\big)$ 
where, c.f. \eqref{Lambdaz},  
\[
\widetilde \Lambda(z) = \frac{\Lambda(z)}{N_\alpha(z)}=z^2-\beta^2 -\frac{\mu^2}{N_\alpha(z)}.
\]
By definition \eqref{Xz}, for $\beta>0$, 
\begin{align*}
\frac{X(\beta)} {X(-\beta)} = 
&
- \exp \left(-\frac 1 \pi \int_0^\infty \frac {\widetilde \theta(t) }{t+\beta}dt
 +\frac 1 \pi \dashint_0^\infty \frac {\widetilde\theta(t)}{t-\beta}dt+  i  \theta(\beta) 
 \right) =\\
&
 \phantom{+\ } \exp \left(\frac {2\beta} \pi \dashint_0^\infty \frac {\widetilde \theta(t) }{t^2-\beta^2}dt 
 \right) =: \exp(J),
\end{align*}
where the integral is in the sense of the Cauchy principal value and the second equality holds since $\theta(\beta)= \pi$.
Due to symmetry \eqref{Lambdasym},
\[
\widetilde \theta(t)= \frac 1 {2i} \log\frac{\widetilde \Lambda^+(t)}{\widetilde \Lambda^-(t)},
\]
and since  
$
\displaystyle \dashint_0^\infty \frac 1 {t^2-\beta^2}dt =0,
$
we can write
\[
J = \frac { \beta} {\pi i} \int_0^\infty \frac { \log \widetilde \Lambda^+(t)- \log \widetilde \Lambda^+(\beta)
   }{t^2-\beta^2}dt 
-
\frac { \beta} {\pi i} \int_0^\infty \frac {  \log \widetilde \Lambda^-(t)-\log \widetilde \Lambda^-(\beta)  }{t^2-\beta^2}dt.
\]
Integrating the function  
\[
f(z) = \frac{\log \widetilde \Lambda(z)-\log \widetilde \Lambda^+(\beta)}{z^2-\beta^2}
\]
over the closed contour in the first quadrant, formed by the axes and a circular arc, and applying 
Jordan's lemma,  we find that
\[
\frac 1{2\pi i} \int_0^\infty \frac{\log \widetilde \Lambda^+(t)-\log \widetilde \Lambda^+(\beta)}{t^2-\beta^2}dt =
-\frac 1 {2\pi} \int_0^\infty \frac{\log \widetilde \Lambda^r(it)-\log \widetilde \Lambda^+(\beta)}{t^2+\beta^2}dt,
\] 
where $\widetilde\Lambda^r(it)$ stands for the limit of $\widetilde \Lambda(z)$ as $z\to it$ in the right half-plane.
Integrating the function 
\[
h(z) = \frac{\log \widetilde \Lambda(z)-\log \widetilde \Lambda^-(\beta)}{z^2-\beta^2}
\]
over similar contour in the fourth quadrant, we get 
\[
\frac 1 {2\pi i} \int_0^\infty \frac{\log \widetilde \Lambda^-(t)-\log \widetilde\Lambda^-(\beta)}{t^2-\beta^2}dt =  
\frac 1 {2\pi} \int_0^\infty \frac{\log \widetilde \Lambda^r(-it)-\log \widetilde \Lambda^-(\beta)}{t^2+\beta^2}dt.
\]
Subtracting, we obtain 
\begin{align*}
J &= 
-\frac {\beta}\pi \int_0^\infty \frac{\log \widetilde \Lambda^r(it)-\log \widetilde \Lambda^+(\beta)}{t^2+\beta^2}dt
-\frac {\beta}\pi \int_0^\infty \frac{\log \widetilde \Lambda^r(-it)-\log \widetilde \Lambda^-(\beta)}{t^2+\beta^2}dt \\
&= 
\frac 1 2 \Big(\log \widetilde \Lambda^+(\beta)+\log \widetilde \Lambda^-(\beta)\Big) 
-\frac {2\beta}\pi \int_0^\infty \frac{\Re\big(\log \widetilde \Lambda^r(it)\big)}{t^2+\beta^2}dt.
\end{align*}
A standard calculation, which uses the explicit expressions 
\[
\Re\big(\log \widetilde \Lambda^r(it)\big)=\log \Big(t^2+\beta^2+ \frac{\mu^2}{\kappa_\alpha}t^{1-\alpha}\Big)
\]
and 
\[
\log \widetilde \Lambda^+(\beta)+\log \widetilde \Lambda^-(\beta) = 2\log \left(\frac{\mu^2}{\kappa_\alpha}\beta^{1-\alpha}\right),
\]
yields the claimed asymptotics 
\[
J = -\frac{2\beta}{\sin \frac \pi {1+\alpha}} \left(\frac{\kappa_\alpha}{\mu^2}\right)^{\frac 1 {1+\alpha}} \big(1+o(1)\big), \quad \beta \to 0. 
\]
\end{proof}

\subsection{Large time limit $\boldsymbol{\alpha \in (1,2)}$}
We can use  \eqref{SzDz} and \eqref{SSDD} to express $\Phi_0(z)$ and $\Phi_1(z)$ in terms of solutions to 
\eqref{pqeq} and \eqref{qqpp}. If we plug the obtained expressions into equations \eqref{cond_poles} and
apply the estimates  \eqref{asym_est} and \eqref{pjqjest}, we arrive at the large time asymptotic relations, $T\to\infty$, 
\begin{equation}\label{asas}
\begin{aligned}
&  
(z_0+\beta)  X(z_0) \Big(  (k^S_1    + k^D_1)  z_0 + k^S_0  + k^D_0  + F_S(z_0)
    + F_D(z_0)
\Big) +  \mu_\eps^2 \psi(0) \asymp 0,\\
&
(z_0-\beta)  X(z_0) 
\Big(
  (k^S_1    - k^D_1)  z_0 + k^S_0 - k^D_0  + F_S(z_0)     - F_D(z_0)
\Big)  -  \mu_\eps  \asymp 0,
\end{aligned}
\end{equation}

In the case $\alpha\in (1,2)$ the function $H(z)$ in \eqref{Hz} can no longer be defined, but nevertheless
the formula in \eqref{Qexpr} remains valid, as can be checked directly using the Sokhotski-Plemelj theorem. 
Hence the second equation in \eqref{asas} is equivalent to    
\[
  (k^S_1    - k^D_1)  z_0 + k^S_0 - k^D_0   \asymp \frac {\mu_\eps} {z_0-\beta}\frac 1{X(\beta)}.
\]
Since $X(\beta)$ is purely real, this implies 
\begin{equation}\label{difsum}
\begin{aligned}
k^S_1 -k^D_1 & \asymp -\frac{\mu_\eps}{X(\beta)} \frac 1 {|z_0-\beta|^2}, \\
k^S_0 - k^D_0 & \asymp -\frac {\mu_\eps}{X(\beta)} \frac{\beta -z_0-\overline{z}_0}{|z_0-\beta|^2}.
\end{aligned}
\end{equation}

The filtering error can now be computed using \eqref{PTfla}. To this end, note that, in view of  \eqref{gTz_expr} and \eqref{SSDD},
the first term satisfies 
\begin{equation}\label{1st}
N_\alpha(-z)e^{ -zT} \widehat g(-z)  =   
\big(k^S_1-k^D_1\big)(-z)^{\frac {\alpha-1} 2-1}  + O\big(z^{\frac {\alpha-1} 2-2}\big), \quad  \Re(z)\to\infty,
\end{equation}
where  
\[
(-z)^{\frac {\alpha-1} 2-1} = -  \frac {\cos (\tfrac \alpha 2 \pi)}{ \pi  }\int_0^\infty     \frac{t^{ \frac{\alpha-1} 2-1}}{t-z}dt.
\]
Due to \eqref{gTz_expr} and \eqref{Nalpha}, the integral in \eqref{PTfla} takes the form
\begin{equation}\label{2nd}
\begin{aligned}
&
\frac 1 {2\pi i}  \int_0^\infty \frac 1 {t-z} \big(N_{\alpha}^+(t)   - N_{\alpha}^-(t) \big) e^{-tT}\widehat g(-t) dt =\\
&
\kappa_\alpha\frac {\cos (\frac \alpha 2 \pi)} { \pi }  \int_0^\infty \frac {t^{\alpha-1}} {t-z}  
\bigg(
(t-\beta) \frac { \Phi_1^+(t)}{\Lambda^+(t)} - \frac{\mu_\eps}{\Lambda^+(t)}   
\bigg) dt
+ z^{-1} R(z,T), 
\end{aligned}
\end{equation}
where the residual $R(z,T)$ vanishes as $T\to\infty$, uniformly over $z$. 
The value of the latter integral will not change asymptotically as $T\to\infty$, if we replace 
\[
\Phi_1^+(t) \asymp (k^S_1-k^D_1)t + k^S_0-k^D_0 + Q^+(t).
\]
Thus, substituting approximations \eqref{1st} and \eqref{2nd} into \eqref{PTfla} and using formula \eqref{Qexpr}, we arrive at  
\begin{equation}\label{PTbetamu}
P_T(\beta, \mu_\eps) \asymp \frac {\kappa_\alpha} {\mu_\eps}     \frac {\cos (\frac \alpha 2 \pi)}{ \pi  }\Big(
 \big(k^S_1-k^D_1\big)  I_2   + \big(k^S_0  - k^D_0\big)  I_1  +  I_0  \Big),
\end{equation}
with  
\begin{equation}\label{I123}
\begin{aligned}
I_0 := & -  \frac {\mu_\eps}{X(\beta)}    
\int_0^\infty  t^{\alpha-1}  \frac{X^+(t)}{\Lambda^+(t)}  dt,
\\
I_1 := &     \int_0^\infty   t^{\alpha-1}(t-\beta) \frac{X^+(t)}{\Lambda^+(t)} dt, \\
I_2 := & 
\int_0^\infty   \Big(t^ \alpha (t-\beta) \frac{X^+(t)}{\Lambda^+(t)}+\frac 1{\kappa_\alpha}  t^{ \frac{\alpha-1} 2-1}\Big)dt.
\end{aligned}
\end{equation}
To simplify the expression in \eqref{PTbetamu} we will need the following identity. 

\begin{lem}\label{lem:XzXz}
\begin{equation}\label{XzXz}
X(z)X(-z) = -   \frac { 1 } { \kappa_\alpha } \frac {\Lambda(z)} {(z^2-z_0^2) (z^2-\overline{z}_0^2)},
\quad z\in \mathbb C\setminus \Real.
\end{equation}
\end{lem}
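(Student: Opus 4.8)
The plan is to establish \eqref{XzXz} by a Liouville-type argument, the same scheme that underlies Lemma \ref{lem:Xz} but now complicated by the presence of the four complex zeros of $\Lambda$. Write $M(z):=-\kappa_\alpha^{-1}\Lambda(z)\big/\big((z^2-z_0^2)(z^2-\overline{z}_0^2)\big)$ for the right-hand side and set $\Xi(z):=X(z)X(-z)/M(z)$. Both $X(z)X(-z)$ and $M(z)$ are sectionally holomorphic on $\mathbb{C}\setminus\Real$, so the goal is to show that $\Xi$ extends to a bounded entire function and then to pin it down to the constant $1$ by matching leading behaviour at infinity.

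First I would check that numerator and denominator carry the same multiplicative jump across the cut. The polynomial $(z^2-z_0^2)(z^2-\overline{z}_0^2)$ is entire, and $X(-z)$ is analytic across $\Real_+$ (its cut sits on $\Real_-$), so for $t>0$ the boundary condition \eqref{homo} gives
\[
\frac{\big(X(\cdot)X(-\cdot)\big)^+(t)}{\big(X(\cdot)X(-\cdot)\big)^-(t)}=\frac{X^+(t)}{X^-(t)}=\frac{\Lambda^+(t)}{\Lambda^-(t)}=\frac{M^+(t)}{M^-(t)}.
\]
The analogous identity on $\Real_-$ follows from \eqref{homo} together with the symmetry $\Lambda^+(t)=\Lambda^-(-t)$ of \eqref{Lambdasym}. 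Hence the jumps cancel in $\Xi$, which is therefore continuous across $\Real\setminus\{0\}$ and holomorphic there.

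Next I would verify that $\Xi$ has no zeros or poles off the real axis. Since $X$ does not vanish on the cut plane, $X(z)X(-z)$ is zero-free on $\mathbb{C}\setminus\Real$; and by Lemma \ref{lem:4.2}(c), in the present case $\alpha_1=1<\alpha_2=\alpha$ the only zeros of $\Lambda$ are $\pm z_0,\pm\overline{z}_0$, which are exactly the zeros of the denominator polynomial, so $M$ is holomorphic and zero-free off $\Real$ as well. To control the growth I would invoke \eqref{Xz}--\eqref{Xzest} with the values stated for this case, namely $k=-1$, $\theta(0+)=-\pi$ and $\theta(\infty)=\tfrac{1-\alpha}{2}\pi$: these give $X(z)=O(1)$ as $z\to0$ and $X(z)\sim(-z)^{(\alpha-3)/2}$ as $z\to\infty$ (the exponential factor tending to $1$). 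Since $\Lambda(0)=-\mu_\eps^2\neq0$ because $N_\alpha(0)=0$ for $\alpha>1$, both $X(z)X(-z)$ and $M(z)$ are bounded and non-vanishing at the origin, so $0$ is a removable singularity and $\Xi$ is entire; and both sides behave like a constant multiple of $z^{\alpha-3}$ at infinity, so $\Xi$ is a bounded entire function, hence constant by Liouville. To identify the constant I would compare leading coefficients, using $X(z)X(-z)\sim(-z)^{(\alpha-3)/2}z^{(\alpha-3)/2}$ and $M(z)\sim-\kappa_\alpha^{-1}z^{2}N_\alpha(z)/z^4$, after which the factor $\kappa_\alpha$ cancels and the limiting ratio reduces to a product of unimodular factors that collapses to $1$, giving $\Xi\equiv1$.

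The routine parts (the jump on $\Real_-$ and the boundedness at the origin) are straightforward once the symmetries are in hand. The one genuinely delicate step, exactly as in the simpler Lemma \ref{lem:Xz}, is the phase bookkeeping in the final coefficient match: one must track carefully the principal branches of $(-z)^{(\alpha-3)/2}$, $(z/i)^{\alpha-1}$ and the integer powers of $z$ in the upper half-plane to confirm that the accumulated phase is exactly $e^{i\pi}$, so that the limiting ratio is $1$ rather than some other unimodular constant. This is where I expect the main effort to lie.
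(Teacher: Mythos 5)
Your argument is correct, but it is not the route the paper takes. The paper proves \eqref{XzXz} by direct computation: it writes $X(z)X(-z)$ as a power prefactor times $\exp\Upsilon(z)$, converts $\Upsilon$ into a single Cauchy integral of $\widetilde\theta$ over all of $\Real$ using the antisymmetry $\widetilde{\theta}(t)=-\widetilde{\theta}(-t)$, rewrites that as a difference of integrals of $\log\widetilde{\Lambda}^{\pm}$, and evaluates these by integrating $\log\widetilde{\Lambda}(\zeta)/(\zeta-z)$ over contours hugging the branch cut $C=\{\widetilde{\Lambda}\in\Real_-\}$; this forces a careful geometric determination of the four arcs $C_1,\dots,C_4$ joining the origin to $\pm z_0,\pm\overline{z}_0$ (Figure \ref{fig1}), after which the cut integrals produce exactly the logarithms of $(z\mp z_0)/z$ and $(z\pm\overline{z}_0)/z$. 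Your Liouville argument reaches the same identity without touching that geometry: you need only the jump relation \eqref{homo}, the symmetry \eqref{Lambdasym}, the non-vanishing of $X$, the simplicity and location of the zeros of $\Lambda$ from Lemma \ref{lem:4.2}, and the endpoint estimates behind \eqref{Xzest}. The checks you outline do go through: on $\Real_-$ the jumps of $X(-\cdot)$ and of $\Lambda$ cancel via $\Lambda^+(t)=\Lambda^-(-t)$; at the origin $\Lambda(0)=-\mu_\eps^2\neq 0$ since $\alpha>1$ and the exponent $k-\theta(0+)/\pi=-1+1=0$ makes $X$ bounded and bounded away from zero; and the final phase computation closes, since along a ray $\arg z=\phi$ in the upper half-plane the ratio of leading terms is $-e^{-i\pi(\alpha-3)/2}\,e^{i\pi(\alpha-1)/2}=-e^{i\pi}=1$ independently of $\phi$, so the constant is $1$ as claimed. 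What your route buys is brevity and robustness — the identical argument proves Lemma \ref{lem:Xz} verbatim with no zeros to track, which is precisely the proof the paper omits as "similar". What it gives up is the explicit description of the level curves of $\arg\widetilde{\Lambda}$, which the paper's computation yields as a by-product but does not use elsewhere.
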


\begin{proof}\
For $X(z)$ defined in \eqref{Xz} with $k=-1$,
\[
X(z)X(-z) =
(-z)^{-1+\frac{\alpha-1}{2}}z^{-1+\frac{\alpha-1}{2}} \exp \big(\Upsilon(z)\big),
\]
where 
\[
\Upsilon(z):=\frac 1 \pi \int_0^\infty \frac {\theta(t)-\theta(\infty)}{t-z } dt+
\frac 1 \pi \int_0^\infty \frac {\theta(t)-\theta(\infty)}{t+z } dt.
\]
Define the function 
\[
\widetilde{\Lambda}(z) := \frac {\Lambda(z)} {z^2 N_\alpha(z)} = 
 1-\beta^2z^{-2}  -\frac {\mu_\eps^2} {z^2N_\alpha(z)}.
\]
Since $\arg\big(N_\alpha^+(t)\big) = \theta(\infty)$ for all $t\in \Real_+$,  
\[
\theta(t)-\theta(\infty) = \arg\big(\widetilde{\Lambda}^+(t)\big) = \frac 1 {2i}\log \frac{\widetilde{\Lambda}^+(t)
}{\widetilde{\Lambda}^-(t)} =: \widetilde{\theta}(t),
\]
and, therefore,
\begin{equation}\label{kozel}
\begin{aligned}
\Upsilon(z) =\, &
\frac 1 \pi \int_0^\infty \frac {\widetilde\theta(t) }{t-z } dt+
\frac 1 \pi \int_0^\infty \frac {\widetilde\theta(t) }{t+z } dt \stackrel{\dagger}{=} 
\frac 1 \pi \int_{-\infty}^\infty \frac {\widetilde\theta(t) }{t-z } dt =\\
&
\frac 1 {2\pi i} \int_{-\infty}^\infty \frac { \log \widetilde{\Lambda}^+(t) }{t-z } dt
-\frac 1 {2\pi i} \int_{-\infty}^\infty \frac { \log \widetilde{\Lambda}^-(t) }{t-z } dt,
\end{aligned}
\end{equation}
where the equality $\dagger$ holds by the antisymmetry $\widetilde{\theta}(t) = - \widetilde{\theta}(-t)$. 
The last two integrals in \eqref{kozel} are well defined since $\lim_{t\to\pm \infty}\widetilde{\Lambda}^\pm(t)=1$. They 
can be evaluated by contour integration of the function 
\begin{equation}\label{fzeta}
f(\zeta):= \frac{\log \widetilde \Lambda (\zeta)}{\zeta-z},
\end{equation}
which must take into account the branch cut of the logarithm,
\begin{equation}\label{Cbr}
C = \big\{\zeta\in \mathbb{C}:  \widetilde{\Lambda}(\zeta) \in \Real_- \big\} = C_1\cup C_2 \cup C_3 \cup C_4,
\end{equation}
where $C_j$ denotes the intersection of $C$ with the $j$-th quadrant.

Let us determine the geometric shapes of each curve $C_j$'s starting with $C_1$. 
For $z = \rho e^{i\phi}$ in the first quadrant, 
with $\rho \in \Real_+$ and $\phi\in (0, \frac \pi 2)$,  
\[
\widetilde{\Lambda}(z)  =
1+\beta^2 \rho^{-2} e^{2 \widetilde \phi i}   
+\frac {\mu_\eps^2} { \kappa_\alpha }  \rho^{-1-\alpha}  e^{  (\alpha+1)\widetilde \phi i},
\]
where $\widetilde \phi := \frac \pi 2 -\phi\in (0 , \frac \pi 2)$. Hence 
$\Im\big(\widetilde{\Lambda}(z)\big)=0$ holds if and only if either $\widetilde\varphi=0$ 
or
\begin{equation}\label{rhotildephi}
\rho^{\alpha-1}  
=-\frac 1{\beta^2 }\frac {\mu_\eps^2} { \kappa_\alpha }   \frac{\sin ( (\alpha+1)\widetilde \phi )}{\sin(2 \widetilde \phi )  }.
\end{equation}
This equation has a solution only if the right hand side is positive, that is, when $\widetilde \phi\in (\frac \pi {\alpha+1}, \frac \pi 2)$. For all such  $\widetilde \phi$ and with $\rho$ as in \eqref{rhotildephi},
\begin{align*}
\Re\big(\widetilde{\Lambda}(z)\big) =\, &
1+\beta^2 \rho^{-2} \cos(2 \widetilde \phi)   
+\frac {\mu_\eps^2} { \kappa_\alpha }  \rho^{-1-\alpha} \cos(  (\alpha+1)\widetilde \phi ) =\\
&
1 +\rho^{-2}\beta^2   
 \frac{
  \sin ( (\alpha-1)\widetilde \phi ) 
  }{\sin ( (\alpha+1)\widetilde \phi )} = 
1 +    \left(  \beta^2   \right)^{ \frac   {\alpha+1}{\alpha-1}} \left(    \frac { \kappa_\alpha } {\mu_\eps^2}  \right)^{ \frac  2 {\alpha-1}} g(\widetilde \phi),
\end{align*}
where we defined 
\[
g(\widetilde \phi) := \left(- \frac{\sin(2 \widetilde \phi )  } {\sin ( (\alpha+1)\widetilde \phi )}\right)^{ \frac 2 {\alpha-1}}
 \frac{
  \sin ( (\alpha-1)\widetilde \phi ) 
  }{\sin ( (\alpha+1)\widetilde \phi )}.
\]

This function is strictly increasing on the interval $(\frac \pi {\alpha+1}, \frac \pi 2)$ and maps it onto $(-\infty, 0)$.
Hence $\Re\big(\widetilde{\Lambda}(z)\big)$ vanishes at the unique angle $\widetilde \phi_0 \in (\frac \pi {\alpha+1}, \frac \pi 2)$, and  $\Re\big(\widetilde{\Lambda}(z)\big) < 0$ if and only if $\widetilde \phi \in   (\frac \pi {\alpha+1}, \widetilde \phi_0)$. 
Therefore $C_1$ is the curve, which starts with $\widetilde \phi=\frac \pi {\alpha+1}$ at the origin and terminates at $z_0=\rho_0e^{i\phi_0}$, where $\phi_0=\frac \pi 2 -\widetilde \phi_0$ and the absolute 
value $\rho_0$ is determined by \eqref{rhotildephi}. The terminal point $z_0$ is precisely the zero of $\Lambda(z)$,
and hence also of $\widetilde \Lambda(z)$,  in the first quadrant.

The imaginary part $\Im(\widetilde \Lambda(z))$ vanishes on the positive imaginary semiaxis and on the continuation of 
$C_1$ corresponding to $\widetilde \phi\in [\widetilde \phi_0, \frac \pi 2)$, where $\Re(\widetilde \Lambda(z))$ remains 
positive. Hence $\Im(\widetilde \Lambda(z))$ preserves its sign on the subset of the first quadrant, which lies between these
curves, and it is readily checked to be positive. 
The rest of $C_j$'s have similar forms, starting at the origin and terminating at the other zeros of $\Lambda(z)$, as
shown on Figure  \ref{fig1}.  Along with the real and imaginary axes they divide the plane into eight subsets, 
on which the sign of $\Im(\widetilde \Lambda(z))$ is constant.

\begin{figure}[ht]
\begin{center}
\input{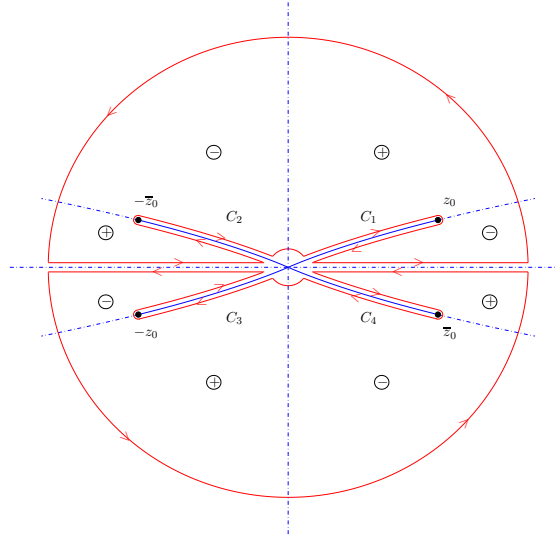}
\caption{\label{fig1}  
The branch cut $C$ in \eqref{Cbr} is depicted in solid blue; $\Im(\widetilde \Lambda(z))$ changes signs across the 
blue lines, both dashed and solid, being encircled over
the corresponding regions; the two integration contours are coloured in red.
}
\end{center}
\end{figure}

For definiteness, suppose $\Im(z)>0$. Then integrating $f(\zeta)$ from \eqref{fzeta} 
along the closed contour in the upper half plane, applying Jordan's lemma and Cauchy's residue theorem, we obtain
\[
\frac 1 {2\pi i }\int_{-\infty}^\infty \frac{\log \widetilde \Lambda^+(t)}{t-z}dt
= 
\log \widetilde \Lambda(z) 
- \frac 1 {2\pi i }\oint_{C_1} \frac{\log \widetilde\Lambda(\zeta)}{\zeta-z}d\zeta
- \frac 1 {2\pi i }\oint_{C_2} \frac{\log \widetilde\Lambda(\zeta)}{\zeta-z}d\zeta,
\]
where the  last two terms stand for the limiting values of the integrals over the shrinking contours around $C_1$ and $C_2$. 
Since $\big|\widetilde \Lambda(\zeta)\big|$ is continuous across $C_j$'s and taking into account the signs of 
$\widetilde \Lambda(\zeta)$, 
\[
\frac 1 {2\pi i }\oint_{C_1} \frac{\log \widetilde\Lambda(\zeta)}{\zeta-z}d\zeta = 
\frac {2\pi i}{2\pi i}\Big[ \log (\zeta-z)\Big]_0^{z_0} = \log \frac {z-z_0}{z}
\]
and 
\[
\frac 1 {2\pi i }\oint_{C_2} \frac{\log \widetilde\Lambda(\zeta)}{\zeta-z}d\zeta = 
 \frac{2\pi i}{2\pi i} \Big[\log (\zeta-z)\Big]_0^{-\overline{z}_0}=  \log \frac{z+\overline{z}_0}{z}.
\] 
Similarly, integration over the contour in the lower half plane gives 
\begin{align*}
\frac 1 {2\pi i }\int_{-\infty}^\infty \frac{\log \widetilde \Lambda^-(t)}{t-z}dt =\,  & 
 \frac 1 {2\pi i }\oint_{C_3} \frac{\log \widetilde\Lambda(\zeta)}{\zeta-z}d\zeta
+
 \frac 1 {2\pi i }\oint_{C_4} \frac{\log \widetilde\Lambda(\zeta)}{\zeta-z}d\zeta = \\
 &
  \log \frac{z+z_0}{z} + \log \frac{z-\overline{z}_0}{z}.
\end{align*}
Plugging this into \eqref{kozel} we obtain 
\[
\Upsilon(z) = 
\log \widetilde \Lambda(z) 
 \frac {z^4} {(z^2-z_0^2) (z^2-\overline{z}_0^2)}
\]
and, consequently, 
\begin{align*}
X(z)X(-z) = &
(-z)^{-1+\frac{\alpha-1}{2}}z^{-1+\frac{\alpha-1}{2}}   \frac {\Lambda(z)} {z^2 N_\alpha(z)} \frac {z^4} {(z^2-z_0^2) (z^2-\overline{z}_0^2)} = \\
-  & \frac { (-z)^{ \frac{\alpha-1}{2}}z^{\frac{\alpha-1}{2}}} {  N_\alpha(z)} \frac {\Lambda(z)} {(z^2-z_0^2) (z^2-\overline{z}_0^2)}=
-   \frac { 1 } { \kappa_\alpha } \frac {\Lambda(z)} {(z^2-z_0^2) (z^2-\overline{z}_0^2)}.
\end{align*}

\end{proof}

Using formula \eqref{XzXz}, the integrals in \eqref{I123} can now be written as 
\begin{equation}\label{III}
I_0   =      \frac {\mu_\eps}{X(\beta)}    \frac {J_0}{\kappa_\alpha},
\quad
I_1  = \frac {\beta  J_0-J_1}{\kappa_\alpha}, \quad 
I_2  =        \frac {\beta J_1-J_2}{\kappa_\alpha},
\end{equation}
where the basic elements are
\begin{equation}\label{Is}
\begin{aligned}
J_0 := & \int_0^\infty       \frac{1}{\big|t^2-z_0^2\big|^2} \frac {t^{\alpha-1}} {X(-t)} dt, \\
J_1 := & \int_0^\infty        \frac{t}{\big|t^2-z_0^2\big|^2} \frac { t^{\alpha-1}} {X(-t)} dt, \\
J_2 := & \int_0^\infty   \left( \frac{t^2 }{\big|t^2-z_0^2\big|^2} \frac {t^{\alpha-1}} {X(-t)} -  t^{ \frac{\alpha-1} 2-1}\right)dt.
\end{aligned}
\end{equation}

Closed form expressions for these integrals are derived in the following lemma.

\begin{lem}
The integrals in \eqref{Is} satisfy    
\begin{align}
\nonumber
J_0 = \frac 1 2\frac 1{z_0^2-\overline z_0^2} 
& 
\bigg( L \big(\tfrac{\alpha+1} 2\big)  -  \widetilde b_0   L\big(\tfrac{\alpha-1} 2\big) \\
&
\nonumber
+ z_0^{-1} M(z_0) -  z_0^{-1} M(-z_0) -  \overline z_0^{-1}   M(\overline z_0)+    \overline z_0^{-1} M(-\overline z_0) 
  \bigg),
  \\
\nonumber  
J_1  =    \frac 1 2 \frac 1{z_0^2-\overline z_0^2} 
& \bigg(   L\big(\tfrac{ \alpha+3} 2\big) -  \widetilde b_0   L \big(\tfrac{\alpha+1} 2\big) \\
&\label{LMz}
+M(z_0) + M(-z_0) -M(\overline z_0) -M(-\overline z_0) 
 \bigg),
 \\
\nonumber 
J_2  =    \frac 1 2 \frac 1{z_0^2-\overline z_0^2} 
&
\bigg(   L\big(\tfrac{\alpha+5} 2\big) -  \widetilde b_0    L\big(\tfrac{\alpha+3} 2\big) \\
\nonumber &
+ z_0 M(z_0) -  z_0  M(-z_0) - \overline z_0 M(\overline z_0) +    \overline z_0  M(-\overline z_0) 
\bigg),  
\end{align}
where 
\begin{equation}\label{Lz}
L(\gamma) = \frac \pi {\sin \pi \gamma }
\Big(z_0^{\gamma-1}+ (-z_0)^{\gamma-1}-\overline z_0^{\gamma-1}-(-\overline z_0)^{\gamma-1}\Big)
\end{equation}
and 
\begin{align}\label{Mfla}
M(z)  =\, & \frac 1 {\kappa_\alpha}\frac { \pi   } {\cos  (\frac \alpha 2 \pi)} \bigg(
\frac 1{X(z)} 
\frac {\Lambda(z)} {z^2-\beta^2}  
-    N_\alpha(z)  z ^{\frac{3-\alpha} 2} \big(1 -  \widetilde b_0 z^{-1}\big) 
\\
&\nonumber
+ \frac 1 2     \frac {\mu_\eps^2} {z^2-\beta^2}     
\Big(    \frac { 1}{X(\beta)} +\frac { 1}{X(-\beta)}\Big) -  \frac 1 2     \frac {\mu_\eps^2} {z^2-\beta^2}\frac z {\beta}\Big( \frac {1 }{X(\beta)}-    \frac { 1}{X(-\beta)}\Big)   \bigg).
\end{align}
\end{lem}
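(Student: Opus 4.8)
The plan is to evaluate the three integrals in \eqref{Is} by the contour–integration and Sokhotski--Plemelj technique already used for the integral $I$ in the preceding lemma, now carrying the extra rational factor $|t^2-z_0^2|^{-2}=\bigl((t^2-z_0^2)(t^2-\overline z_0^2)\bigr)^{-1}$. The first step is a partial-fraction reduction,
\[
\frac{1}{(t^2-z_0^2)(t^2-\overline z_0^2)}=\frac{1}{z_0^2-\overline z_0^2}\Bigl(\frac{1}{t^2-z_0^2}-\frac{1}{t^2-\overline z_0^2}\Bigr),\qquad \frac{1}{t^2-w^2}=\frac{1}{2w}\Bigl(\frac{1}{t-w}-\frac{1}{t+w}\Bigr),
\]
which produces the common prefactor $\tfrac12(z_0^2-\overline z_0^2)^{-1}$ and the weights $z_0^{\pm1},\overline z_0^{\pm1}$ seen in \eqref{LMz}. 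After converting $1/X(-t)$ into its jump $1/X^+-1/X^-$ exactly as in the preceding lemma, each $J_k$ becomes a fixed linear combination, over $w\in\{\pm z_0,\pm\overline z_0\}$, of Cauchy transforms of the type $F_\gamma(w)=\tfrac1{2\pi i}\int_0^\infty \tfrac{t^{\gamma-1}}{t-w}\bigl(1/X^+(t)-1/X^-(t)\bigr)\,dt$, the only subtlety being that the individual transforms diverge and survive only in the convergent $z_0\leftrightarrow\overline z_0$ difference.

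I would then split each such quantity into a leading ``Mellin'' part and a ``pole'' part, which become the $L$- and $M$-terms respectively. For the $L$-part I replace $1/X(-t)$ by its two–term expansion at infinity, $t^{(3-\alpha)/2}\bigl(1-\widetilde b_0 t^{-1}\bigr)$, cf. \eqref{Xcz} and \eqref{Xzest}, and use
\[
\int_0^\infty\frac{t^{\gamma}}{(t^2-z_0^2)(t^2-\overline z_0^2)}\,dt=\frac{L(\gamma)}{2(z_0^2-\overline z_0^2)},
\]
obtained by applying $\int_0^\infty t^{\gamma-1}(t+a)^{-1}dt=\pi(\sin\pi\gamma)^{-1}a^{\gamma-1}$ to the four partial fractions and reassembling \eqref{Lz}. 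With the numerator $t^{\alpha-1+k}$ the two terms of the expansion contribute the arguments $\gamma=\tfrac{\alpha+1}2+k$ and $\gamma=\tfrac{\alpha-1}2+k$, giving precisely the pairs in \eqref{LMz} with $-\widetilde b_0$ weighting the lower one. For $J_2$ the integral at $\gamma=\tfrac{\alpha+5}2$ diverges at infinity; the subtraction of $t^{(\alpha-1)/2-1}$ built into \eqref{Is} removes exactly this divergence, so that $L(\tfrac{\alpha+5}2)$ is read as the analytic continuation of \eqref{Lz}.

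The $M$-part collects the contributions at the four zeros $\pm z_0,\pm\overline z_0$ of $\Lambda$, which I would compute by the $G$-function device of the preceding lemma. Writing $G_\gamma(z)=(z^2-\beta^2)\bigl(F_\gamma(z)+(-z)^{\gamma-1}/X(z)\bigr)$, computing the jump $G_\gamma^+-G_\gamma^-$ from \eqref{homo}, and solving the resulting Hilbert problem by Sokhotski--Plemelj yields $G_\gamma(z)=c_\gamma/X(z)+P_\gamma(z)$ with $P_\gamma$ a polynomial whose degree matches the growth of $G_\gamma$, the constants being fixed by the conditions $G_\gamma(\pm\beta)=0$ — this is the mechanism that introduces $X(\pm\beta)$. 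Evaluating $F_\gamma$ at $z\in\{\pm z_0,\pm\overline z_0\}$ and simplifying with \eqref{XzXz}, \eqref{homo} and $\Lambda(z_0)=0$ then assembles the closed form \eqref{Mfla}: the homogeneous piece is $X(z)^{-1}\Lambda(z)(z^2-\beta^2)^{-1}$ (which drops at the zeros of $\Lambda$), the polynomial $N_\alpha(z)z^{(3-\alpha)/2}(1-\widetilde b_0z^{-1})$ is the same asymptotic correction used for the $L$-part, and the two $\mu_\eps^2(z^2-\beta^2)^{-1}$ terms are the $\pm\beta$ pole contributions, which combine into $\tfrac{\mu_\eps^2}{2\beta}\bigl[(z-\beta)^{-1}X(-\beta)^{-1}-(z+\beta)^{-1}X(\beta)^{-1}\bigr]$.

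I expect the main obstacle to be the simultaneous branch–cut bookkeeping for $t^{\gamma-1}$, $N_\alpha$ and $X$ (with $X(-\cdot)$ cut on $\mathbb R_-$): both the passage to the jump $1/X^+-1/X^-$ and the evaluation at the four points require tracking the phases $\cos\tfrac\alpha2\pi$, $\sin\pi\gamma$ and the branches of $(\pm z_0)^{\gamma-1}$ consistently and reconciling them with the weights from the partial fractions. A second delicate point is aligning the order of the asymptotic subtraction — equivalently the degree of $P_\gamma$ — with each $J_k$ separately, so that the $L$- and $M$-parts recombine into the stated expressions; in particular the regularising term in $J_2$ must be matched to the extra polynomial freedom in $G_{(\alpha+5)/2}$.
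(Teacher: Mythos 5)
Your plan follows the paper's own proof in all essentials: the same partial-fraction reduction producing the prefactor $\tfrac12(z_0^2-\overline z_0^2)^{-1}$ and the weights $z_0^{\pm1},\overline z_0^{\pm1}$; the same two-term asymptotic subtraction of $1/X(-t)$ yielding the $L$-terms as explicit beta-type integrals (with the built-in regularisation handling the divergence in $J_2$); and the same Hilbert-problem device $(z^2-\beta^2)\bigl(P(z)+(-z)^{\alpha-1}Y(z)\bigr)$ with constants fixed by vanishing at $\pm\beta$ to obtain \eqref{Mfla}. The only cosmetic difference is that the paper performs the jump conversion on the single Cauchy transform $M(z)=\int_0^\infty t^{\alpha-1}(t-z)^{-1}Y(-t)\,dt$ of the already-subtracted function rather than on a family $F_\gamma$, so the argument is correct and essentially identical.
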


\begin{proof}
Define the function
\begin{equation}\label{Yz}
Y(z) := \frac 1{X(z)} - (-z)^{\frac{3-\alpha} 2} \big(1 +  \widetilde b_0 z^{-1}\big),
\end{equation}
where 
\[
\widetilde b_0 :=  \frac 1 \pi \int_0^\infty \big(\theta(t)-\theta(\infty)\big) d\tau <0.
\]
%
%
The integrals in  \eqref{Is} can be written as 
\begin{equation}\label{J012}
\begin{aligned}
J_0  = & \int_0^\infty       \frac{1}{\big|t^2-z_0^2\big|^2}  t^{\alpha-1} Y(-t)dt + 
U\big(\tfrac{\alpha+1} 2\big)  
-  \widetilde b_0 U\big(\tfrac{\alpha-1} 2\big),
  \\
J_1  = & \int_0^\infty        \frac{t}{\big|t^2-z_0^2\big|^2}   t^{\alpha-1} Y(-t) dt
+ U\big(\tfrac{ \alpha+3} 2\big) -  \widetilde b_0 U\big(\tfrac{\alpha+1} 2\big),
 \\
J_2  = & \int_0^\infty   
 \frac{t^2 }{\big|t^2-z_0^2\big|^2}  t^{\alpha-1} Y(-t)dt
+ V\big(\tfrac{\alpha-3} 2\big) -  \widetilde b_0 U\big(\tfrac{\alpha+3} 2\big),
\end{aligned}
\end{equation}
where 
\begin{align*}
U(\gamma):=& \int_0^\infty \frac{t^\gamma} {|t^2-z_0^2|^2}dt = \frac 1 2\frac 1{z_0^2-\overline z_0^2} L(\gamma), \\
V(\gamma):= & \int_0^\infty   t^{\gamma} \Big(     \frac{    t^4 }{\big|t^2-z_0^2\big|^2 }  -     1    \Big) dt = \frac 1 2\frac 1{z_0^2-\overline z_0^2} L(\gamma+4).
\end{align*}
Here $L(\gamma)$ is the function defined in \eqref{Lz} and the latter integrals are evaluated by standard contour integration. 
The formulas in  \eqref{LMz} are derived from \eqref{J012}, using the partial fraction decompositions 
\begin{align*}
\frac{ 1  }{|t^2-z_0^2|^2} = & 
\frac 1 2\frac 1{z_0^2-\overline z_0^2}\Big(\frac {z_0^{-1}} {t-z_0} - \frac {z_0^{-1}} {t+z_0} 
-
\frac {  \overline z_0^{-1}} {t-\overline z_0} + \frac {  \overline z_0^{-1}} {t+\overline z_0} \Big),
  \\
\frac{ t }{\big|t^2-z_0^2\big|^2 } = &
\frac 1 2 \frac 1{z_0^2-\overline z_0^2} \Big(\frac 1 {t-z_0} + \frac 1{t+z_0} -\frac 1 {t-\overline z_0} -\frac 1 {t+\overline z_0}\Big),
\\
\frac {t^2} {|t^2-z_0^2|^2}= & \frac 1 2 \frac 1{z_0^2-\overline z_0^2}\Big(  \frac {z_0} {t-z_0} - \frac {z_0}{t+z_0}
-   \frac {\overline z_0}{t-\overline z_0} +   \frac {\overline z_0} {t+\overline z_0} \Big),
\end{align*}
and  the notation 
\[
M(z):= \int_0^\infty \frac{t^{\alpha-1}}{t-z}Y(-t)dt.
\]

It is left to show that $M(z)$ satisfies the claimed formula. To this end, 
integrating the function $f(\zeta)=\frac{\zeta^{\alpha-1}}{\zeta-z}Y(\zeta)$ over semicircular contours in the upper and 
lower half planes 
and summing the obtained equations, we get
\begin{equation}\label{Mraz}
 \frac {\sin  (\alpha \pi)}{ \pi  } M(-z)  
 = 
 z^{\alpha-1}Y(z)-  P(z) 
\end{equation}
where   
\[
P(z) := \frac 1{2\pi i} \int_0^\infty \dfrac{t^{\alpha-1}}{t-z}\big( Y^+(t)-Y^-(t)\big)dt.
\]
To evaluate this integral, define  
\begin{equation}\label{Mdva}
H(z) = (z^2-\beta^2) \Big(P(z)+(-z)^{\alpha-1}Y(z)\Big).
\end{equation}
This function is sectionally holomorphic on $\mathbb{C}\setminus \Real_+$ and for $t>0$ 
\begin{align*}
 &
H^+(t)-H^-(t) =\,  \\
&
(t^2-\beta^2) \Big(P^+(t)-P^-(t)+(e^{-\pi i } t)^{\alpha-1}Y^+(t)-(e^{\pi i } t)^{\alpha-1}Y^-(t)\Big) =\\
&
2 \cos (\tfrac {\alpha-1} 2\pi) (t^2-\beta^2) t^{\alpha-1}\Big(   e^{-\frac {\alpha-1} 2\pi i }       Y^+(t)
-e^{\frac {\alpha-1} 2\pi i }   Y^-(t)\Big) =\\
&
2 \sin (\tfrac {\alpha } 2\pi) (t^2-\beta^2) t^{\alpha-1}
\Big(   
e^{-\frac {\alpha-1} 2\pi i }   \frac 1{X^+(t)} -e^{\frac {\alpha-1} 2\pi i } \frac 1{X^-(t)}
\Big) =\\
&
2 \sin (\tfrac {\alpha } 2\pi) \frac 1 {\kappa_\alpha}  
\Big(   
  \frac {(t^2-\beta^2)N_\alpha^+(t)}{X^+(t)} - \frac {(t^2-\beta^2)N_\alpha^+(t)}{X^-(t)}
\Big) = \\
&
2 \sin (\tfrac {\alpha } 2\pi) \frac {\mu_\eps^2} {\kappa_\alpha}  
\Big(   
  \frac {1}{X^+(t)} - \frac {1}{X^-(t)}
\Big).
\end{align*}
Since $H(z)$ grows not faster than linearly, it follows that
\begin{equation}\label{Mtri}
H(z) = 2 \sin (\tfrac {\alpha } 2\pi)  \frac {\mu^2_\eps}{\kappa_\alpha} \frac { 1}{X(z)} + c_1 z + c_0,
\end{equation}
where constants $c_1$ and $c_0$ are identified using the equations $H^+(\pm \beta)=0$,
\begin{align*}
c_0 = & -  \sin (\tfrac {\alpha } 2\pi)  \frac {\mu_\eps^2}{\kappa_\alpha} \Big( \frac { 1}{X(\beta)} +\frac { 1}{X(-\beta)}\Big), \\
c_1 = &  -\sin (\tfrac {\alpha } 2\pi)  \frac {\mu_\eps^2}{\kappa_\alpha} \frac 1 {\beta}\Big( \frac { 1}{X(\beta)}-    \frac { 1}{X(-\beta)}\Big).
\end{align*}
Plugging \eqref{Yz}, \eqref{Mdva} and \eqref{Mtri} into \eqref{Mraz} we obtain \eqref{Mfla} since 
\[
N_\alpha(z)= \frac {\kappa_\alpha} {2 \sin (\tfrac {\alpha } 2\pi)}\Big(z^{\alpha-1}  + (-z)^{\alpha-1}   \Big).
\]
\end{proof}
  
Now we are ready to find the ultimate expression for the filtering error in this case. Substituting 
\eqref{difsum}, \eqref{III} and \eqref{LMz} into \eqref{PTbetamu}, we get 
\begin{align*}
P_\infty(\beta, \mu_\eps) =\,   &   
\frac {1}{X(\beta)} \frac {\cos (\frac \alpha 2 \pi)}{ \pi  }\frac 1 {|z_0-\beta|^2}
\Big( J_2   -J_1 (z_0+\overline{z}_0)  +  J_0 z_0\overline z_0    \Big) =\\
&
\frac {1}{X(\beta)} \frac {\cos (\frac \alpha 2 \pi)}{ \pi  } \frac 1 {|z_0-\beta|^2}
\Big(B_1 + B_2+ B_3\Big),
\end{align*}
where 
\begin{align*}
B_1 := & \phantom{+\ }\frac 1 2 \frac 1{z_0^2-\overline z_0^2}
\Big( 
L\big(\tfrac{\alpha+5} 2\big) - (z_0+\overline{z}_0)   L\big(\tfrac{ \alpha+3} 2\big)+  z_0\overline z_0  L \big(\tfrac{\alpha+1} 2\big) 
\Big) =\\
&
\phantom{+\ } \frac 1{z_0 -\overline z_0 }
  \frac \pi {\cos \frac{\alpha }2\pi  } \Big(   z_0^{\frac{\alpha+1} 2} -  \overline z_0^{\frac {\alpha+1} 2} \Big),
\\
B_2 := & -  \frac {\widetilde b_0} 2 \frac 1{z_0^2-\overline z_0^2} \Big(  L\big(\tfrac{\alpha+3} 2\big)    -   (z_0+\overline{z}_0)  L \big(\tfrac{\alpha+1} 2\big)   +    z_0\overline z_0     L\big(\tfrac{\alpha-1} 2\big)  \Big) =\\
&
\phantom{+\ }   \widetilde b_0 \frac 1{z_0 -\overline z_0 } \frac \pi {\cos  \tfrac{\alpha } 2 \pi} \Big(z_0^{\frac{\alpha-1} 2} -   \overline z_0^{\frac{\alpha-1} 2}\Big),
\\
B_3 : = &\phantom{+\ } \frac 1{z_0 -\overline z_0 }
 \Big(M(-\overline z_0)  -M(-z_0)\Big).
\end{align*}
Since $-\overline z_0$ and $-z_0$  are zeros of $\Lambda(z)$, equation \eqref{Mfla} yields
\begin{align*}
&
M(-\overline z_0)  -M(-z_0) =\,  
 \frac { \pi   } {\cos  (\frac \alpha 2 \pi)}  \bigg\{ 
 \overline z_0^{\frac {\alpha+1} 2} -  z_0^{\frac {\alpha+1} 2}  
 + \widetilde b_0   \Big( \overline z_0^{\frac {\alpha-1} 2}    -   z_0^{\frac {\alpha-1} 2}    
\Big)
\\
&
+\frac 1 2 \frac {\mu_\eps^2} {\kappa_\alpha}  
  \frac {1} {\beta} \bigg( 
  \frac { 1}{X(-\beta)} \frac {1} {  z_0+\beta  } 
-\frac { 1}{X(\beta)}  \frac {1} { z_0 -\beta  } 
+ \frac {1}{X(\beta)}  \frac {1} {\overline z_0 -\beta   }   
-\frac { 1}{X(-\beta)}  \frac {1} { \overline z_0 +\beta }  
\bigg)
\bigg\}
\end{align*}
and, collecting all parts together, we finally get
\begin{align*}
P_\infty(\beta, \mu_\eps) =\, 
&
\frac {1}{X(\beta)}   \frac 1 {|z_0-\beta|^2}
  \frac {\mu_\eps^2} {\kappa_\alpha} \frac {1} {2\beta} 
\bigg( 
    \frac{1}{X(\beta)|z_0-\beta|^2} 
-    \frac {1 }{X(-\beta)|z_0+\beta|^2}
\bigg) =\\
&
   \frac {1} {2\beta} 
  \bigg( 
\frac{|z_0+\beta|^2}{|z_0-\beta|^2} \frac { X(-\beta)}{X(\beta)}  
- 1
\bigg),
\end{align*}
where  identity \eqref{XzXz} was used in the last equality. 
This is the large time limit claimed in \eqref{showme_T} for $H<\frac 1 2$.

\subsection{Small noise asymptotics $\boldsymbol{\alpha  \in (1,2)}$}

The corresponding small noise asymptotics \eqref{showme_eps} is derived as in Lemma \ref{lem:limbeta}.

\section{Proof of Theorem \ref{thm:2}} 

The limit expression for the filtering error \eqref{Pinfty_expr} was derived 
when $\alpha_2<\alpha_1<1$, in which case the structural function does not have zeros. 
To demonstrate how the method works in presence of zeros, in this section we will also detail the proof for 
$\alpha_1=\alpha \in (0,1)$ and $\alpha_2=1$. 
To derive the analog of formula \eqref{Pinfty_expr} in this case, we will have to return to the point, where the proof splits into cases,
and reformulate the equivalent problem accordingly.

\subsection{The equivalent problem}
For $\alpha_1 =: \alpha \in (0,2)$ and $\alpha_2=1$, cf. \eqref{Lambdaz},
\begin{equation}\label{LL}
\Lambda(z) =  z^2-\beta^2  - \mu_\eps^2 N_{\alpha}(z).
\end{equation}
Since $N_{\alpha_2}(z)=1$, equations \eqref{eq:6.11} can be written as 
\begin{equation}\label{tildePhieq}
\begin{aligned}
&
\widetilde \Phi_0^+  (t)   -  \frac {\Lambda^+(t)}{\Lambda^-(t)}    \widetilde \Phi_0^-(t) 
=\, 
e^{-tT} \widetilde\Phi_1(-t)\Big(\frac{ \Lambda^+(t) }{\Lambda^-(t)}-1\Big), \\
&
\widetilde \Phi_1^+   (t)   -  \frac{\Lambda^+(t)}{ \Lambda^-(t)} \widetilde \Phi_1^-(t)
=\,  
  e^{-tT}\widetilde \Phi_0(-t) \Big( \frac{\Lambda^+(t) }{ \Lambda^-(t)} - 1\Big) ,
\end{aligned}\qquad t\in \Real_+,
\end{equation}
where we defined, cf. \eqref{Phi0Phi1}, 
\begin{equation}\label{tildePhi}
\begin{aligned}
\widetilde \Phi_0(z):=\, & \Phi_0(z)+   \psi(0)( z-\beta ), \\
\widetilde \Phi_1(z) :=\, &  \Phi_1(z) -\frac {1}{\mu_\eps}( z+\beta ). 
\end{aligned}
\end{equation}
Unlike \eqref{eq:6.11}  equations \eqref{tildePhieq} do not contain additional free term in the right hand side.

When $\alpha \in (0,1)$ the structural function $\Lambda(z)$ has four zeros, see Lemma \ref{lem:4.2}.
In view of \eqref{LL} and definitions \eqref{tildePhi}, the expression  \eqref{gTz_expr} takes the form
\[
\widehat g(z)  = -(z+\beta)
\frac{
 \widetilde \Phi_0(z)  +e^{-zT}  \widetilde \Phi_1(-z)   
}{\Lambda(z)}
+
 \psi(0)+\frac {1}{\mu_\eps}e^{-zT},
\]
and hence removal of the poles implies the conditions, cf. \eqref{poles},
\begin{equation}\label{polesrem}
\begin{aligned}
&
\widetilde \Phi_0(z_0)  + e^{-z_0 T}\widetilde  \Phi_1(-z_0)   =0, \\
&
\widetilde \Phi_1(z_0) + e^{-z_0 T} \widetilde \Phi_0(-z_0)    =0.
\end{aligned}
\end{equation}

Finally, when $\alpha_2=1$, the integral terms in \eqref{cond1} and \eqref{PTfla} vanish, 
and in view of \eqref{gTz_expr}, $\lim_{z\to\infty} \Phi_0(z) =0$
and $\lim_{z\to\infty} \Phi_1(z) = v_{g,1}(T)$, or equivalently, 
\begin{equation}\label{ahad}
\begin{aligned}
& \widetilde \Phi_0(z)\asymp \psi(0)( z-\beta ), \\
& \widetilde \Phi_1(z) \asymp  v_{g,1}(T)-\frac {1}{\mu_\eps}( z+\beta ),
\end{aligned}
\qquad z\to\infty.
\end{equation}

\subsection{Large time limit $\boldsymbol{\alpha \in (0,1)}$}
For $\alpha\in (0,1)$ the angle $\theta(t) =\arg\big(\Lambda^+(t)\big)$ is negative with the limits
\[
\theta(0+) = \frac {1-\alpha}2\pi -\pi
\quad
\text{and}
\quad
\theta(\infty)=0.
\]
Define, cf. \eqref{SzDz},
\begin{equation}\label{tildeSzDz}
\widetilde S(z)   := \frac{\widetilde \Phi_0(z)+\widetilde \Phi_1(z)}{2X(z)}\quad \text{and}\quad 
\widetilde D(z)   := \frac{\widetilde \Phi_0(z)-\widetilde\Phi_1(z)}{2X(z)}.
\end{equation}
Since the functions in \eqref{tildePhi} have the same growth near the origin as in \eqref{apriori_est} and in view of 
\eqref{Xzest}, the choice $k=-1$ in \eqref{Xz} guarantees (square) integrability of the restrictions 
$\widetilde S(-t)$ and $\widetilde D(-t)$,  $t\in \Real_+$ near the origin. Due to the additional linear terms in \eqref{tildePhi}, 
it also implies that $\widetilde S(z)$ and $\widetilde D(z)$ are asymptotic to polynomials of degree two as $z\to\infty$ and therefore, by the Sokhotski-Plemelj theorem, cf. \eqref{eq:6.13}, 
\begin{equation}\label{eq:6.4}
\begin{aligned}
\widetilde S(z) = & \phantom{+\ }\frac 1 \pi \int_0^\infty  \frac{ e^{-tT}h(t)}{t-z} \widetilde S(-t)dt  + P_S(-z), \\
\widetilde D(z) = & -\frac 1 \pi \int_0^\infty  \frac{ e^{-tT}h(t)}{t-z} \widetilde D(-t)dt  + P_D(-z),
\end{aligned}
\end{equation}
with  polynomials 
\begin{equation}\label{PSPD}
P_S(z) =\,  k^S_2 z^2 + k^S_1 z + k^S_0\quad\text{and}\quad
P_D(z) =\,   k^D_2 z^2 + k^D_1 z + k^D_0,
\end{equation}
where the coefficients are constants, possibly dependent on $T$ and $\eps$.

In this case $\theta(t)=O(t^{\alpha-3})$ as $t\to\infty$ and hence the exponent in \eqref{Xz} satisfies   
\begin{align}
\nonumber
X_c(z) := & \exp \left(\frac 1 \pi \int_0^\infty \frac {\theta(t)}{t-z}dt\right) = 
\exp \Big(
- \frac 1 z m_0  
- \frac 1 {z^2} m_1
+O(z^{\alpha-3})  
\Big)= \\
& \label{Xcz}
1   - \frac 1 z  b_0  - \frac 1 {z^2} b_1 + O(z^{\alpha-3}), \quad z\to\infty,
\end{align}
where  $b_0 = m_0$,  $b_1 =  m_1 -\frac 1 2 m_0^2$ and
\[
m_j = \frac 1 \pi \int_0^\infty  t^j \theta(t) dt.
\]
Consequently, in view of \eqref{ahad}, the asymptotic  terms in \eqref{tildeSzDz} and \eqref{eq:6.4}  match, if the
coefficients in \eqref{PSPD}  satisfy  
\begin{equation}\label{kskdeq}
\begin{aligned}
k^S_2 =\, & -\frac 1 2 \Big(\psi(0)-\frac 1{\mu_\eps}\Big), & k^S_1 =\, & - k^S_2 b_0 - \frac   \beta 2 \Big(\psi(0)+\frac 1{\mu_\eps}\Big) +
\frac  { v_{g,1}(T)} 2, \\
k^D_2 = & -\frac 1 2\Big(\psi(0)+\frac  1{\mu_\eps}\Big), & 
k^D_1 =&  -k^D_2 b_0 -\frac  \beta 2 \Big(\psi(0)-\frac 1{\mu_\eps}\Big) - \frac  {v_{g,1}(T)} 2.
\end{aligned}
\end{equation}

As in the previous sections, the auxiliary integral equations 
\begin{equation}\label{qqpp}
\begin{aligned}
p_j(t) = & \phantom{+\ }\frac 1 \pi \int_0^\infty  \frac{ e^{-\tau T}h(\tau)}{\tau+t} p_j(\tau) d\tau  + t^j, \\
q_j(t) = & -\frac 1 \pi \int_0^\infty  \frac{ e^{-\tau T}h(\tau)}{\tau+t} q_j(\tau) d\tau  + t^j,
\end{aligned}
\end{equation}
have unique solutions, whose analytic extensions satisfy, cf. \eqref{asym_est},
\begin{equation}\label{pjqjest}
|p_j(z) - z^j| \le C \frac 1 z \frac 1 T \quad \text{and}\quad |q_j(z) - z^j| \le C \frac 1 z \frac 1 T.
\end{equation}
By linearity 
\begin{equation}\label{ksd}
\begin{aligned}
\widetilde S(z) =\, & k^S_2 p_2(-z) + k^S_1 p_1(-z) + k^S_0 p_0(-z),\\
\widetilde D(z) =\, & k^D_2 q_2(-z) + k^D_1 q_1(-z) + k^D_0 q_0(-z).
\end{aligned}
\end{equation}
We can now use \eqref{tildeSzDz} and \eqref{ksd} to express  
$\widetilde \Phi_0(z)$ and $\widetilde \Phi_1(z)$ in terms of the above constants and solutions to integral equations \eqref{qqpp}.
Then plugging these expressions into \eqref{polesrem} and using the estimates  \eqref{pjqjest} we obtain, as $T\to\infty$,
\begin{equation}\label{keqs}
\begin{aligned}
&
z_0^2  k^S_2 - z_0 k^S_1  + k^S_0 +z_0^2 k^D_2  -  z_0 k^D_1   + k^D_0    \asymp 0, \\
&
z_0^2 k^S_2  - z_0 k^S_1  + k^S_0   - z_0^2 k^D_2  + z_0 k^D_1  - k^D_0    \asymp 0.
\end{aligned}
\end{equation}

Powers of $z_0$ have nonzero complex parts and hence these are, in fact, four equations with real valued coefficients. 
Thus we arrive at a system of eight linear equations \eqref{kskdeq} and \eqref{keqs}
for the limiting values of the eight unknowns, namely $\psi(0)$, $v_{g,1}(T)$ and $k^S_j$ and $k^D_j$, $j=0,1,2$.

Taking the imaginary part of the equations above we get rid of $k^S_0$ and $k^D_0$:
\begin{align*}
&
\Im(z_0^2)  (k^S_2 +  k^D_2) - \Im(z_0) (k^S_1    + k^D_1)         \asymp 0, \\
&
\Im(z_0^2) (k^S_2  -   k^D_2) - \Im(z_0) (k^S_1        -  k^D_1)      \asymp 0.
\end{align*} 
Now from \eqref{kskdeq}
\begin{align*}
k^S_2 + k^D_2 =\, & -  \psi(0)  , & 
k^S_1 +k^D_1 =\, &   (b_0 -   \beta ) \psi(0)   \\
k^S_2  -k^D_2 =\, &    \frac 1{\mu_\eps},   & 
k^S_1 - k^D_1  =\, & - \frac 1{\mu_\eps} (b_0 +   \beta )  +v_{g,1}(T)      
\end{align*}
and hence 
\begin{align*}
&
-\Im(z_0^2)     \psi(0) - \Im(z_0) (b_0 -   \beta ) \psi(0)         \asymp 0, \\
&
\Im(z_0^2)  \frac 1{\mu_\eps}  + \Im(z_0)  \frac 1{\mu_\eps} (b_0 +   \beta )  -   \Im(z_0)v_{g,1}(T)     \asymp 0.
\end{align*} 
This implies that $\psi(0)\asymp 0$ and 
\[
P_T(\beta, \mu_\eps ) = \frac 1 {\mu_\eps} v_{g,1}(T) \asymp     \frac 1{\mu_\eps^2} \Big( b_0 +   \beta    + \frac{\Im(z_0^2)}{\Im(z_0)}  \Big)   =
\frac 1{\mu_\eps^2} \Big( b_0 +   \beta    + 2  \Re(z_0)   \Big),
\]
which is the formula claimed  in \eqref{Pinfty1}.

\subsection{Small noise asymptotics $\boldsymbol{\alpha  \in (0,1)}$}
The expression in \eqref{Peps0} follows from Theorem \ref{thm:main}, since for $\beta=0$ and $\eps=1$, 
the zero of $\Lambda(z)$ in the first quadrant can be found explicitly,
\[
z_0 = (\mu^2 \kappa_\alpha)^{\frac 1 {3-\alpha}}  \exp\Big(\tfrac{1-\alpha}{3-\alpha}\tfrac \pi 2 i\Big),
\]
and the first moment of $\theta(t)$ can be computed in the closed form  
\[
b_0 = - (\mu^2\kappa_\alpha)^{\frac 1{3-\alpha}} \frac{\sin \frac \pi 2 \frac{1+\alpha}{3-\alpha}}{\sin \frac \pi {3-\alpha}}.
\]

\subsection{Large time limit $\boldsymbol{\alpha \in (1,2)}$} 
In this case, $\theta(t)=\arg\big(\Lambda^+(t)\big)$ is positive and 
\[
\theta(0+) = \pi\quad \text{and}\quad\theta(\infty) = 0.
\]
In view of estimates \eqref{apriori_est} and \eqref{Xzest}, the suitable choice of the power factor in \eqref{Xz} is $k=1$, 
which guarantees that the restrictions of \eqref{tildeSzDz} to $\Real_-$ are (square) integrability near the origin. 
This choice and \eqref{tildePhi} imply that $\widetilde S(z)$ and $\widetilde D(z)$ are of order $O(1)$ as $z\to\infty$. 
Hence representation \eqref{eq:6.4} hold with 
$
P_S(z) = k_0^S
$
and
$
P_S(z) = k_0^D,
$
and hence, by linearity,  
\[
\widetilde S(z) = k^S_0 p_0(-z) \quad \text{and}\quad \widetilde D(z) = k^D_0 q_0(-z).
\]
Comparing this with \eqref{tildeSzDz} implies 
\begin{equation}\label{eqkk}
k^S_0 = -\frac 1 2 \Big(\psi(0)-\frac 1 {\mu_\eps}\Big)\quad \text{and}\quad k^D_0 = -\frac 1 2 \Big(\psi(0)+\frac 1 {\mu_\eps}\Big).
\end{equation}

The filtering error can now be found from \eqref{PTfla}, where for $\alpha_2 =1$ the last term vanishes. Plugging \eqref{tildePhi} into   \eqref{gTz_expr} yields
\begin{align*}
P_T  =\, &  \frac 1 {\mu_\eps}\lim_{\Re(z)\to 0}z e^{-zT} \widehat g(-z) = 
\frac 1 {\mu_\eps}\lim_{\Re(z)\to 0}
z 
(z-\beta) \frac{ \widetilde \Phi_1(z)+\frac {1}{\mu_\eps}( z+\beta )}{\Lambda(-z)}  = 
\\
&
\frac 1 {\mu_\eps}\lim_{\Re(z)\to 0}
\frac{z (z-\beta)}{\Lambda(-z)}\bigg( X(z) \Big(k^S_0 p_0(-z)-k^D_0q_0(-z)\Big)+\frac {1}{\mu_\eps}( z+\beta )\bigg) \asymp \\
& \frac 1 {\mu_\eps^2} (b_0  +\beta) = P_\infty(\beta,\mu_\eps),
\end{align*}
where we used \eqref{eqkk} and the approximation, cf. \eqref{Xcz},
\[
X(z) = -z \Big(1-b_0 z^{-1} + o(z^{-1})\Big),\quad z\to\infty. 
\]

\subsection{Small noise asymptotics $\boldsymbol{\alpha\in (1,2)}$}
In this case, for $\beta =0$ and $\eps=1$, 
\[
b_0 =   (\mu^2\kappa_\alpha)^{\frac 1{3-\alpha}} \frac{1}{\sin \frac \pi {3-\alpha}},
\]
and the small noise asymptotics follows by virtue of Theorem \ref{thm:main}.

\appendix
\section{More on solvability of \eqref{maineq}}\label{sec:A}

Solvability of equation \eqref{maineq} in a space, suitable for our purposes, is a subtle matter.
In essence, we construct such a solution which, moreover, turns out to be 
amenable to asymptotic analysis. The roadmap of our construction is outlined in Section \ref{sec:preview}.
and this section gives an extended discussion of the solvability question, outlines several alternative approaches and 
elaborates on the construction in this paper. 
 
\subsection{Integration of nonrandom functions with respect to fBm}
Let us briefly recall a construction of stochastic integrals with respect to fBm $B^H$ for nonrandom integrands (see, e.g., \cite{KLR00}).
For any $H\in (0,1)\setminus \{\frac 1 2\}$, consider the function space,
$$
\Lambda_T^{H-\frac 12 } = \Big\{f: [0,T]\mapsto \Real \ \text{such that\ } \int_0^T \big(s^{\frac 1 2-H}(\Psi_T f)(s)\big)^2ds<\infty \Big\},
$$
where $\Psi_T$ is the operator 
$$
(\Psi_T f)(s) :=- 2H \frac d {ds} \int_s^T f(r) r^{H-\frac 1 2}(r-s)^{H-\frac 1 2}dr.
$$
The bilinear form 
\begin{equation}\label{sp}
\langle f,g\rangle_{\Lambda_T^{H-\frac 12 }} = \frac{2-2H}{\lambda_H}\int_0^T s^{1-2H} (\Psi_T f)(s) (\Psi_T g)(s)ds,
\end{equation}
where  $\lambda_H$ is an explicit constant, 
defines a scalar product on $\Lambda_T^{H-\frac 12 }$. It can be shown that 
\begin{equation}\label{iso}
\begin{aligned}
\langle f,g\rangle_{\Lambda_T^{H-\frac 12 }}  = &
 \int_0^T  f(s) \frac \partial {\partial s} \int_0^T g(t)\frac \partial {\partial t}  K(s,t)dtds =\\
 &
H\int_0^T  f(s) \frac \partial {\partial s} \int_0^T g(t) |s-t|^{2H-1} \sign(s-t)  dtds,
\end{aligned}
\end{equation}
where $K(s,t)$ is the covariance function \eqref{KV} of the fBm. These and other related formulas 
can be found in, e.g., \cite[Subsection 3.3]{CCK} for a quick reference. 

\begin{rem}
Various useful relations of $\Lambda_T^{H-\frac 12 }$ to other spaces are known. In particular, 
it can be shown that $L^2([0,T])\subset \Lambda_T^{H-\frac 12 }$ for $H>\frac1 2$, and 
$\Lambda_T^{H-\frac 12 }\subset  L^2([0,T])$ for $H<\frac 1 2$,  see \cite{PT01}.
Also these inclusions follow from the eigenvalues asymptotics of the covariance operator of the fractional noise, the formal
derivative of the fBm, see \cite{ChK}.
\end{rem}

Let $\mathcal E$ be the space of all simple functions. 
For $g\in \mathcal E$, the stochastic integral $I_T(f):=\int_0^T g(s)dB^H_s$ is defined as the Riemann-Stieltjes
sum. In view of \eqref{iso}, for $f,g\in \mathcal E$,
\begin{equation}\label{isom}
\E \int_0^T f(s)dB^H_s\int_0^T g(s)dB^H_s = \langle f,g\rangle_{\Lambda_T^{H-\frac 12 }}.
\end{equation}
Hence the stochastic integral $I_T$ defines an isometry between $\mathcal E$ with the scalar product \eqref{sp}
and the linear subspace $\mathrm{sp}_T(B^H)\subset L^2(\Omega)$ of the finite linear combinations of increments of $B^H$ 
with the scalar product $\langle \xi,\eta\rangle =\E \xi \eta$ for $\xi, \eta \in \mathrm{sp}_T(B^H)$. 

It can be shown that $\mathcal E$ is dense in $\Lambda_T^{H-\frac 12 }$ and this allows to extend the isometry $I_T$ to any 
$g\in \Lambda_T^{H-\frac 12 }$ by means of the $L^2(\Omega)$ limit. Namely, let $g^n$ be any sequence of simple functions, 
such that $\|g^n-g\|_{\Lambda_T^{H-\frac 12 }}\to 0$, then, by the isometry property \eqref{isom}, 
$$
\E\big(I_T(g^n) - I_T(g^m)\big)^2 = \|g^n-g^m\|_{\Lambda_T^{H-\frac 12 }} \xrightarrow[n,m\to\infty] {}0, 
$$
which means that $I_T(g^n)$ is Cauchy in $L^2(\Omega)$. By completeness  of $L^2(\Omega)$, the limit 
$$
I_T(g) := \lim_{n\to\infty} I_T(g^n) =: \int_0^T g(s)dB^H_s 
$$
exists.  This limit does not depend on the choice 
of approximating sequence $(g^n)\subset\mathcal E$ and hence defines the stochastic integral of any $g\in \Lambda_T^{H-\frac 12 }$
unambiguously. Moreover, the extended map $I_T: \Lambda_T^{H-\frac 12 }\mapsto L^2(\Omega)$ preserves the isometric 
property \eqref{isom}.

Let $\overline {\mathrm{sp}}_T(B^H)$ be the closure of $\mathrm{sp}_T(B^H)$ in $L^2(\Omega)$, that is, the subspace of 
all $L^2(\Omega)$ limits of linear combinations of increments of $B^H$. The image of the extended isometry $I_T$ is some linear 
subspace of $\overline {\mathrm{sp}}_T(B^H)$. Does it coincide with the whole closure $\overline {\mathrm{sp}}_T(B^H)$? 
Since $\overline{\mathrm{sp}}_T(B^H)$ is a complete subspace and $I_T$ is an isometry, the answer to this 
question is affirmative if and only if the space $\Lambda_T^{H-\frac 12 }$ is complete. Indeed, by definition, 
for any $\xi \in \overline {\mathrm{sp}}_T(B^H)$ there is a sequence of random variables $\xi^n = I_T(g^n)$
such that $\xi_n \to \xi$ in $L^2(\Omega)$. Since $L^2(\Omega)$ is complete, $\xi_n$ is a Cauchy sequence and thus 
$$
\|g^n-g^m\|_{\Lambda_T^{H-\frac 12}} = \E (I_T(g^n)-I_T(g^m))^2\xrightarrow[n,m\to\infty]{}0,
$$
that is, $g^n$ is Cauchy in $\Lambda_T^{H-\frac 12}$. Now if $\Lambda_T^{H-\frac 12}$ is complete, then $g=\lim_n g^n\in \Lambda_T^{H-\frac 12}$ exists
and $I_T(g)=\xi$, $\P$-a.s. Conversely, let $g^n$ be a Cauchy sequence in $\Lambda_T^{H-\frac 12}$, then $I_T(g^n)$ is Cauchy in $L^2(\Omega)$.
By completeness of $L(\Omega^2)$ there is $\xi=\lim_n\xi_n\in  \overline {\mathrm{sp}}_T(B^H)$. Assume now that any random variable in 
$\overline {\mathrm{sp}}_T(B^H)$ is an image of $I_T$, then there exists $g\in \Lambda_T^{H-\frac 12}$ such that $\xi = I_T(g)$
and 
$$
\|g^n-g\|_{\Lambda_T^{H-\frac 12}}= \E (I_T(g^n)-I_T(g))^2= \E (\xi_n-\xi)^2\to 0,
$$
which means that $g^n$ is convergent in $\Lambda_T^{H-\frac 12}$ and hence the latter is complete.

It was shown in \cite{PT01} that the space $\Lambda_T^{H-\frac 12 }$ is complete for $H<\frac 1 2$, but it is 
incomplete for $H>\frac 1 2$. An important implication is that there are random variables in $\overline {\mathrm{sp}}_T(B^H)$
which cannot be represented as stochastic integrals of functions in $\Lambda_T^{H-\frac 12 }$ with respect to fBm 
with $H>\frac 1 2$.

An additional insight is given by the canonical innovation representation of the fBm, see \cite{KLR00}. The 
innovation Brownian motion 
$$
W_t = \int_0^t \sqrt{\frac{2-2H}{\lambda_H}}(\Psi_t^{-1}u^{H-\frac 1 2})(s)dB^H_s
$$
generates the same filtration as $B^H$. Then by the martingale representation theorem
for any $\xi\in \overline {\mathrm{sp}}_T(B^H)$ there exists a function $f\in L^2([0,T])$ such that 
\begin{equation}\label{xi}
\xi = \int_0^T f(s)dW_s.
\end{equation}
Moreover, 
\begin{equation}\label{WB}
\int_0^T f(s)dW_s = \int_0^T  \sqrt{\frac{2-2H}{\lambda_H}}(\Psi_T^{-1}u^{H-\frac 1 2}f(u))(s)dB^H_s,
\end{equation}
whenever the integrand in the right hand side is well defined and belongs to $\Lambda_T^{H-\frac 12}$.
For $H>\frac 1 2$ it is possible to find a function $f\in L^2([0,T])$ such that \eqref{WB} fails and 
for such a function the random variable \eqref{xi} will not be representable 
by the stochastic integral with respect to the fBm.  

\subsection{The linear filtering problem}

Let $X$ be a zero mean Gaussian process with integrable cadlag paths and $B^H$
an independent fractional Brownian motion with Hurst parameter $H\in (0,1)$. 
Define the process 
$$
Y_t = \int_0^t X_sds + B^H_t, \quad t\ge 0.
$$
Since all the processes are Gaussian, for any $T>0$, the conditional expectation $\widehat X_T = \E(X_T|\mathcal F^{Y}_T)$ 
is a random variable, which belongs to $\overline{\mathrm{sp}}_T(Y)$,  the closure in $L^2(\Omega)$ of all linear 
combinations of increments of $Y$ on the interval $[0,T]$. In view of the discussion in the previous section, the following question arises.

\begin{quote}
\em Does there exist a non-random function $g \in \Lambda_T^{H-\frac 1 2}\cap L^1([0,T])$ such that 
$$
\widehat X_T = \int_0^T g(s) dY_s, \quad \P-a.s.?
$$
If so how can it be found?
\end{quote}

\subsection{Refresh on the standard Brownian case ($\mathbf{H=\frac 1 2}$)}

Let us briefly recall how an affirmative answer to the above question is given in \cite{LS1} in the standard Brownian case.
Let $\F^{Y,n}_T$ be the $\sigma$-algbera generated by the increments of process $Y$ on the $2^n$-dyadic partition of $[0,T]$. 
Since $Y$ is a cadlag process, $\F^Y_T=\bigvee_{n=1}^\infty\F^{Y,n}_T$ and by Levy's zero-one law, 
$$
\widehat X^n_T=\E (X_T|\F^{Y,n}_T)\xrightarrow[n\to\infty]{a.s.} \E(X_T|\F^Y_T)=\widehat X_T,
$$
where the convergence holds in $L^2(\Omega)$ as well. 
By the normal correlation theorem
$$
\widehat X^n_T = \int_0^T g^n(t) dY_t,
$$
where $g^n(\cdot)$ is a simple function.  
Being convergent, the sequence $\widehat X^n_T$ is Cauchy in $L^2(\Omega)$.
On the other hand, by independence of $X$ and $B^{1/2}$, 
$$
\E\big(\widehat X^n_T-\widehat X^m_T\big)^2     
\ge \int_0^T (g^n(s)-g^m(s))^2 ds,
$$
and hence $g^n(\cdot)$ is Cauchy in $L^2([0,T])$. Since this space is complete, $g^n(\cdot)$ converges to some $g\in L^2([0,T])$.
For such a function the stochastic integral  $\int_0^T g(s)dY_s$ is well defined, and
$$
\E\Big(\widehat X_T - \int_0^T g(t)dY_t\Big)^2 \le 
2 \E\big(\widehat X_T -\widehat X^n_T)^2 +2\E\Big( \int_0^T \big(g^n(t)-g(t)\big)dY_t\Big)^2\to 0,
$$
and hence 
$$
\widehat X_T = \int_0^T g(t)dY_t, \quad \P-a.s.
$$
Now using the orthogonality property of the conditional expectation, the standard calculation shows that $g(\cdot)$ solves the equation 
\begin{equation}\label{WH}
g(t)+\int_0^T g(s) K_X(s,t)ds   = K_X(t,T), \quad \text{for a.a.}\ t\in [0,T],  
\end{equation}
where $K_X(s,t)$ is the covariance function of $X$.

Conversely, we could have started with the equation \eqref{WH}: if it has a solution $g\in L^2([0,T])$, then the stochastic 
integral $\widehat X_T := \int_0^T g(s)dY_s$ is well defined and satisfies the usual properties. Then \eqref{WH} implies 
that $X_T - \widehat X_T$ is orthogonal to any random variable in $\overline {\mathrm{sp}}_T(Y)$ and hence $\widehat X_T = \E(X_T|\F^Y_T)$.
In fact the Fredholm equation \eqref{WH} does have such a solution if the kernel $K_X(s,t)$ is Hilbert-Schmidt.

\subsection{The fractional case $\mathbf H\in (0,1)\setminus \{\frac 1 2\}$}

The arguments from the previous section apply to $\Lambda_T^{H-\frac 12}$ with $H<\frac 1 2$, since this subspace is complete and, 
moreover, $\Lambda_T^{H-\frac 12}\subset L^2([0,T])\subset L^1([0,T])$. The latter inclusion ensures that the limit weight function $g$
is not only integrable with respect to fBm, but also with respect to the time variable, so that $\int_0^T g(s)dY_s$  is indeed well defined. 

On the other hand, the space $\Lambda_T^{H-\frac 12}$ for $H>\frac 12$, being incomplete, must be treated differently.  
It suffices to show that equation, cf. \eqref{maineq},
\begin{equation}\label{fWH}
\frac \partial {\partial s} \int_0^T g(r) \frac {\partial} {\partial r} K(r,s)dr + \int_0^T K_X(r,s)g(r)dr   =K_X(s,T),\quad s\in (0,T),
\end{equation}
has a solution in $L^1([0,T])$. Note that in this case it also belongs to $\Lambda_T^{H-\frac 12}$: indeed, taking scalar product of  
\eqref{fWH} with $g$ implies  
\begin{equation}\label{LLb}
\|g\|_{\Lambda_T^{H-\frac 12}}^2\le \int_0^T g(s) K_X(s,T)ds \le \|K_X(\cdot,T) \|_\infty \|g\|_1.
\end{equation}
This is a special feature of the solution to the above equation, since $L^1([0,T])\not\subseteq \Lambda_T^{H-\frac 12}$.
For $g\in L^1([0,T])\cap \Lambda_T^{H-\frac 12}$, the stochastic integral 
$$
\widehat X_T = \int_0^T g(s)dY_s := \int_0^T g(s)dB^H_s + \int_0^T g(s)X_s ds
$$ 
is well defined, if, e.g. $X$ has continuous paths.
Moreover, equation \eqref{fWH} implies that
$$
\E \Big(X_T -\int_0^T g(s)dY_s\Big)\int_0^T h(s)dY_s =0
$$ 
for any $h\in \Lambda_T^{H-\frac 12}$. Since  $\xi \in \overline {\mathrm{sp}}_T(Y)$ can be approximated 
in $L^2(\Omega)$ by a sequence of stochastic integrals of {\em simple} functions, in fact, we have
$$
\E \Big(X_T -\int_0^T g(s)dY_s\Big)\xi =0,
$$ 
for any such $\xi$ and, therefore, $\widehat X_T$ is a version of conditional expectation.
In Section \ref{sec:A8} we detail the calculations which show that the solution to \eqref{fWH} constructed in this paper 
does indeed belong to $L^1([0,T])$.  

\subsection{Solvability through explicit inverse}
For simplicity, let $T=1$ and omit it from the notations. 
Define the operators 
$$
(A  f)(s) =   \frac \partial {\partial s} \int_0^1 f(s) \frac {\partial} {\partial r} K(r,s)dr  =
H  \frac \partial {\partial s} \int_0^1 f(t) |s-t|^{2H-1} \sign(s-t)  dt
$$
and 
$$
(R   f)(s) =  \int_0^1 K_X(r,s)f(r)dr.
$$
Then equation \eqref{fWH} reads
\begin{equation}\label{AK}
A  g +R  g = f,
\end{equation}
where $f(s) = K_X(s,1)$. The operator $A$ is invertible, see e.g. \cite{LB98}, 
\begin{align}
& \label{Ainv}
(A^{-1} f)(s) =  \\
\nonumber
& - c_H s^{\frac 1 2-H}\frac d{ds}\int_s^1 dw w^{2H-1} (w-s)^{\frac 1 2 -H}\frac d{dw} \int_0^w   z^{\frac 1 2-H}
(w-z)^{\frac 1 2-H}f(z)dz,
\end{align}
where $c_H$ is an explicit constant. 
If $A^{-1}$ is applicable to both sides of \eqref{AK}, then $g$ solves the equation
\begin{equation}\label{ARg}
g + A^{-1} R g = A^{-1} f.
\end{equation}
If moreover, the operator $A^{-1}R$ and the function $A^{-1} f$ are sufficiently regular, then \eqref{ARg} is a Fredholm equation of the second kind, 
and its solvability in an appropriate space follows from the general theory.

\subsubsection{Case $H<\frac 1 2$}
In this case, the derivatives and integrals in \eqref{Ainv} are interchangeable 
and the inverse of $A$ turns out to be a weakly singular integral operator, see \cite[Theorem 5.1 (iv)]{CCK}, 
$$
(A^{-1} f)(s)  = \int_0^1 L(u,v) f(v)du,
$$
where $L(u,v)=|u-v|^{-2H}N(u,v)$ with $N\in C([0,1]^2)$. Since $K_X\in C([0,1]^2)$ it follows that $A^{-1} R$ in this case is an integral 
operator with continuos kernel. Similarly, $A^{-1} f$ is a continuous function. This reduces \eqref{fWH} to a Fredholm 
equation with continuous kernel. The homogeneous equation $g + A^{-1} R g=0$, or equivalently, $A g +Rg=0$ has only trivial 
solutions, since all eigenvalues of $A$ are positive, see \cite{ChK}. 
Hence by the Fredholm alternative, the non-homogeneous equation \eqref{ARg}
has the unique (continuous) solution (see, e.g., \cite{RN55}).
Then, in view of \eqref{LLb}, $g\in \Lambda_T^{H-\frac 1 2}\cap L^1([0,T])$.

\subsubsection{Case $H>\frac 1 2$}Note that  $f(z)=\int_0^z f'(r)dr$ and hence \eqref{Ainv} in this case can be rewritten as 
$$
(A^{-1} f)(x) =  
\int_0^1 f'(r) p(x,r)dr,
$$
where
\begin{align}
&\nonumber 
p(x,u) = 
- c_H x^{\frac 1 2-H}\Big((2-2H)  \int_r^1 z^{\frac 1 2-H} \rho(x,z) dz + r^{\frac 3 2-H}  \rho(x,r)\Big),
\end{align}
and 
$$
\rho(x,r)=\frac d{dx}\int_{x\vee r}^1  w^{2H-2} (w-x)^{\frac 1 2 -H}  (w-r )^{\frac 1 2-H} dw.
$$
A calculation shows that 
$$
|\rho(x,r)| \le\,   C |x-r|^{1-2H}(x\vee r)^{2H-2} + (1-x)^{\frac 1 2 -H}  (1-r )^{\frac 1 2-H}\one{x>r}.
$$
Further computations become cumbersome, but if pushed, seem to lead to a Fredholm equation of the second kind, 
with $L^2([0,T]^2)$ kernel and $L^1([0,T])$ forcing function. Then it remains to be checked that such equations 
have $L^1([0,T])$ solution. We will not pursue this direction here.

\subsection{Solvability by construction in this paper}\label{sec:A8}

Let us recap the main steps of the construction in this paper, considering, for definiteness,  
the case $\alpha_2 \in (0,1)$ and $\alpha_1>\alpha_2$, which corresponds to $H_2>\frac 1 2$ and $H_1<H_2$; 
all other cases can be treated similarly. 
In order to avoid confusion with the notations used in the text, all the objects produced in the course of construction
will be marked by asterisk. 

\medskip

\begin{enumerate}
\addtolength{\itemsep}{0.7\baselineskip}
\renewcommand{\theenumi}{\alph{enumi}}

\item\label{a} Solve the integral equations \eqref{pqeq}, treating the constant $\psi(0)$, which appears in \eqref{fSfD} and thus also in 
\eqref{F_SF_D}, as a free parameter and denote it by $c$.   
As explained in the text, these equations have unique solution in $L^2(\Real_+)$,
at least for all sufficiently large $T$ or all sufficiently small $\eps>0$. 

\item   Form the functions $\Phi_0^*(z)$ and $\Phi_1^*(z)$ using the formulas \eqref{PhiPhi}, 
where $X(z)$ is defined by \eqref{Xz}.

\item\label{c}  Compute the function $\widehat g^*(z)$ by the formula \eqref{gTz_expr} using $\Phi_0^*(z)$ and $\Phi_1^*(z)$ and the parameter $c$ 
in place of $\psi(0)$. 
Substitute this expression into condition \eqref{cond1_alpha_small} and solve the obtained equation for $c$. 
Denote the obtained value by $c^*$. Thus we obtain 
\begin{equation}\label{gTzstar}
\widehat g^*(z)  = -\frac 1 {\Lambda(z)}
\bigg(
 (z+\beta) \Big(\Phi_0^*(z)+e^{-zT} \Phi_1^*(-z)\Big) +  \mu_\eps^2  N_{\alpha_1}(z) 
\Big(c^*+\frac {1}{\mu_\eps}e^{-zT}\Big)
\bigg).
\end{equation}

\item Compute $g^*(x)$ by applying the inverse Laplace transform to $\widehat g^*(z)$.

\end{enumerate}

\medskip

Our goal is to show that 

\medskip

\begin{enumerate}
\addtolength{\itemsep}{0.7\baselineskip}
\renewcommand{\theenumi}{\roman{enumi}}

\item\label{i} $g^*\in  L^1([0,T])$ and

\item\label{iig}  $g^*$ does indeed solve equation \eqref{fWH}.

\end{enumerate}

\medskip

Note that if  \eqref{i} and \eqref{iig} hold, then the bound \eqref{LLb} implies $g^* \in \Lambda^{H-1/2}_T$ as well. 

\medskip

\subsubsection{Proof of \eqref{i}}
The proof is by a careful inspection of all the functions involved in the construction. 
Let us estimate the growth of the functions $f_S(t)$ and $f_D(t)$ defined in \eqref{fSfD} at infinity and near the origin. 
To this end, write  
\begin{align*}
&
\bigg|\frac{N_{\alpha_2}^-(t)}{X^-(t)}-\frac{N_{\alpha_2}^+(t)}{X^+(t)} \bigg| = 
\bigg|\frac{N_{\alpha_2}^-(t)}{X^-(t)}\bigg|\bigg| \frac{N_{\alpha_2}^-(t) \Lambda^+(t)-N_{\alpha_2}^+(t)\Lambda^-(t)}{N_{\alpha_2}^-(t) \Lambda^+(t)}  \bigg | =\\
&
\mu^2_\eps  \frac{1}{|X^-(t)|} \frac 1{|\Lambda^+(t)|}
\Big|N_{\alpha_2}^+(t) N_{\alpha_1}^-(t)-  N_{\alpha_2}^-(t) N_{\alpha_1}^+(t)\Big|,
\end{align*}
where the last equality is obtained by plugging the expression \eqref{Lambdaz}.
In view of the formula \eqref{Nalpha} and the estimates \eqref{Xzest} (with $k=1$) and since $\alpha_1>\alpha_2$, it follows that 
$$
\bigg|\frac{N_{\alpha_2}^-(t)}{X^-(t)}-\frac{N_{\alpha_2}^+(t)}{X^+(t)} \bigg| = \begin{cases}
O(t^{\alpha_1-4+\frac {1-\alpha_2}2}), & t\to\infty, \\
O(t^{\alpha_1-1+\frac {1-\alpha_2} 2}), & t\to 0.
\end{cases} 
$$
Consequently, cf.  \eqref{fSfD}, 
$$
\big\{f_S(t), f_D(t)\big\} = \begin{cases}
O(t^{\alpha_1-3+\frac {1-\alpha_2}2}), & t\to\infty, \\
O(t^{\alpha_1-1+\frac {1-\alpha_2} 2}), & t\to 0,
\end{cases} 
$$
and, by definitions \eqref{F_SF_D}, 
$$
\big\{F_S(-t), F_D(-t)\big\} =  
\begin{cases}
O\big(t^{-1\vee (\alpha_1-3+\frac {1-\alpha_2}2)}\big), & t\to\infty, \\
O(t^{0\wedge (\alpha_1-1+\frac {1-\alpha_2} 2)}), & t\to 0.
\end{cases} 
$$
Note that $F_S,F_D\in L^2(\Real_+)$ and hence equations \eqref{pqeq} 
have unique solutions in $L^2([0,T])$, as explained in the text.  
Then in view of estimates \eqref{asym_est}, the formulas \eqref{PhiPhi} imply that $\Phi_0^{*\pm }(t), \Phi_1^{*\pm}(t)$ are 
locally square integrable on $\Real_+$ and 
$$
\big\{\Phi_0^{*\pm }(t), \Phi_1^{*\pm}(t)\big\} = O(t^{\alpha_1-2}) \quad \text{as\ } t\to\infty.
$$

By construction $\widehat g^*(z)$ is an entire function, the Laplace transform inversion can be carried out by integration on the
imaginary axis
\begin{equation}\label{gTx}
g^*(x) = \frac 1 {2\pi i}\int_{-i\infty}^{i\infty} \widehat g^*(z)e^{zx}dz.
\end{equation}
Since $\widehat g^*(z)$ is analytic and $\widehat g^*(z)\to 0$ as $z\to\infty$ and $\Re(z)>0$, its inverse Laplace transform 
vanishes on $\Real_-$, i.e., $g^*(x)=0$ for $x<0$. Similarly, since $e^{zT} \widehat g^*(z)\to 0$ as $z\to\infty$ and $\Re(z)<0$, 
it follows that $g^*(x)=0$ for $x>T$. For $x\in (0,T)$, \eqref{gTx} can be evaluated by 
plugging the expression \eqref{gTzstar} and integrating along arc sector counters in each quarter of the complex plane.
After a rearrangement this gives  
\begin{equation}\label{gTfla}
\begin{aligned}
g^*(x) = & -\frac 1 {2\pi i}\int_{-i\infty}^{i\infty} \frac 1 {\Lambda(z)} \Big( (z+\beta) \Phi_0^*(z)  + \mu_\eps^2 N_{\alpha_1}(z) c^* \Big) e^{zx}dz \\
&
-\frac 1 {2\pi i}\int_{-i\infty}^{i\infty} \frac 1 {\Lambda(z)} \Big((z+\beta)  \Phi_1^*(-z) + \mu_\eps N_{\alpha_1}(z) \Big)e^{z(x-T)}dz =\\
&
-  \frac 1 {2\pi i}\int_0^\infty \frac 1 {\Lambda^-(t)} \Big( (-t+\beta) \Phi_0^*(-t)  + \mu_\eps^2 N_{\alpha_1}^-(t) c^* \Big) e^{-tx}dt \\
&
+\frac 1 {2\pi i}\int_{0}^{\infty} \frac 1 {\Lambda^+(t)} \Big( (-t+\beta) \Phi_0^*(-t)  + \mu_\eps^2 N_{\alpha_1}^+(t) c^* \Big) e^{-tx}dt \\
&
-\frac 1 {2\pi i}\int_{0}^{\infty} \frac 1 {\Lambda^+(t)} \Big((t+\beta)  \Phi_1^*(-t) + \mu_\eps N_{\alpha_1}^+(t)     \Big)e^{-t(T-x)}dt \\
&
+\frac 1 {2\pi i}\int_{0}^{\infty} \frac 1 {\Lambda^-(t)} \Big((t+\beta)  \Phi_1^*(-t) + \mu_\eps N_{\alpha_1}^-(t)     \Big)e^{-t(T-x)}dt =\\
&  \frac 1 {2\pi i}\int_0^\infty e^{-tx} R_0(t) dt 
-\frac 1 {2\pi i}\int_{0}^{\infty}  e^{-t(T-x)} R_1(t) dt,
\end{aligned}
\end{equation}
where we defined 
\begin{align*}
R_0(t) = &
(\beta-t) \Phi_0^*(-t) \Big(\frac 1 {\Lambda^+(t)}-\frac 1 {\Lambda^-(t)}\Big) +\mu_\eps^2 
\Big(\frac {N_{\alpha_1}^+(t)} {\Lambda^+(t)} -    \frac {N_{\alpha_1}^-(t)} {\Lambda^-(t)} \Big)c^*,
\\
R_1(t)= & (\beta+t)  \Phi_1^*(-t) \Big( \frac 1 {\Lambda^+(t)}- \frac 1 {\Lambda^-(t)}\Big)+ \mu_\eps \Big( \frac {N_{\alpha_1}^+(t) } {\Lambda^+(t)}  -     \frac {N_{\alpha_1}^-(t) } {\Lambda^-(t)} \Big).
\end{align*}
In view of the above estimates these functions are bounded and decay to zero as a power function as $t\to\infty$.
This implies the desired claim: 
\begin{align*}
\|g^*\|_1 \le & \int_0^\infty |R_0(t)|\int_0^T e^{-tx}dx  dt + \int_0^\infty  |R_1(t)| \int_0^T e^{-t(T-x)}dx dt \\
&
\int_0^\infty |R_0(t)| \frac{1-e^{-Tt}}{t}  dt + \int_0^\infty  |R_1(t)| \frac {1-e^{-Tt}}{t} dt  <\infty.
\end{align*}

\subsubsection{Proof of \eqref{iig}}
The expression \eqref{gTfla} reveals that $g^*$ is continuous on $(0,T)$, 
possibly with integrable singularities at the endpoints. Thus $g^*$ is in the domain of both operators in equation \eqref{fWH}
and can be substituted into its left hand side. We want to show that  
\begin{equation}\label{eqstar}
\frac \partial {\partial s} \int_0^T g^*(r) \frac {\partial} {\partial r} K(r,s)dr + \int_0^T K_X(r,s)g^*(r)dr   = K_X(s,T), \quad \forall s\in (0,T).
\end{equation}
To do so we can apply the Laplace transform to both sides, extending them by zero outside the interval $(0,T)$, and check that 
equality is obtained on the whole plane. 
This amounts to repeating the calculations in the proof of Lemma \ref{lem:4.1}. 
Thus proving \eqref{eqstar} is equivalent to showing that, cf. \eqref{gTz_expr},
\begin{multline}\label{eqzz}
\int_0^T g^*(x)e^{-zx}dx  +\frac 1 {\Lambda(z)}
\bigg(
 (z+\beta) \Big(\Xi_0(z)+  e^{-zT}\Xi_1(-z)\Big) + \\ \mu_\eps^2  N_{\alpha_1}(z) 
\Big(\psi^*(0)+\frac {1}{\mu_\eps}e^{-zT}\Big)
\bigg)=0,
\end{multline}
where $\Xi_0(z)$, $\Xi_1(z)$ are the functions computed by plugging $g^*(x)$ and the corresponding function $\psi^*(x)$, 
defined by \eqref{psidef}, into \eqref{Phi0Phi1}.

As explained in the proof of \eqref{i} (see the paragraph following \eqref{gTx}), the function $g^*(x)$ vanishes outside $[0,T]$.
Thus the first term in \eqref{eqzz} coincides with the Laplace transform $\widehat g_*(z)$ and hence \eqref{eqzz} is equivalent to 
$$
\widehat g^*(z)  +\frac 1 {\Lambda(z)}
\bigg(
 (z+\beta) \Big(\Xi_0(z)+e^{-zT}\Xi_1(-z)\Big) +  \mu_\eps^2  N_{\alpha_1}(z) 
\Big(\psi^*(0)+\frac {1}{\mu_\eps}e^{-zT}\Big)
\bigg)=0.
$$
After plugging the expression for $\widehat g^*(z)$ constructed in the steps \eqref{a}-\eqref{c}, we see that this equality holds if  
$\Xi_0(z)$ and $\Xi_1(z)$ coincide with $\Phi_{0,*}(z)$ and $\Phi_{1,*}(z)$, and $\psi^*(0)$ coincides with the constant $c^*$ found in 
step \eqref{c}.

Let us first argue that $\psi^*(0)=c^*$. Since $\widehat g^*(z)$ is an entire function, so is $\widehat \psi^*(z)$. 
Letting $z:=-\beta$ in \eqref{psihatg},
shows that $\psi^*(0)= \widehat g^*(-\beta)-\frac 1 {\mu_\eps} e^{\beta T}$. On the other hand, by taking  $z\to \beta$ in the upper 
half plane in \eqref{gTzstar} implies $c^* = \widehat g^*(-\beta)-\frac 1 {\mu_\eps} e^{\beta T}$. Thus indeed $\psi^*(0)=c^*$.
Now we can show that $\Xi_0(z) = \Phi_{0,*}(z)$. 
The functions in the right hand side of \eqref{Phi0Phi1} are sectionally holomorphic on $\mathbb C\setminus \Real_+$ and
the first equation implies that  for $t>0$, 
\begin{equation}\label{PhiPhiA}
\Xi_0^+(t) -\Xi_0^-(t) = (t-\beta)  \big(\Psi_{g^*,0}^+(t)-\Psi_{g^*,0}^-(t)\big)
+ \mu^2_\eps \big(\Psi_{\psi_{T,*},0}^+(t)-\Psi_{\psi_{T,*},0}^-(t)\big).
\end{equation}
A direct calculation shows that the operator defined in \eqref{Psidefn} takes the form
$$
\Psi_{f,0}(z) = -\int_0^\infty \frac{N_\alpha^+(t)-N_\alpha^-(t)}{t-z}\widehat f(t)dt,
$$
when either $\alpha\in (0,1)$ and $\widehat f$ is locally bounded or $\alpha\in (1,2)$ and $\widehat f$ vanishes 
at the origin at a suitable rate. Hence this formula is valid for both $f:=g$ and $f:=\psi$.  

Since $\Phi_{0,*}(z)$ and $\Phi_{1,*}(z)$ satisfy, by construction, the boundary conditions  \eqref{eq:6.11},
the function  $\widehat g_*(z)$ is holomorphic and hence \eqref{PhiPhiA} becomes
\begin{align*}
& 
\Xi_0^+(t) -\Xi_0^-(t) = \\
&
- (t-\beta)   \big(N_{\alpha_2}^+(t)-N_{\alpha_2}^-(t)\big)\widehat g^*(t) 
- \mu_\eps^2 \big( N_{\alpha_1}^+(t)-N_{\alpha_1}^-(t)\big)\widehat \psi^*(t) \stackrel\dagger=\\
&
- (t-\beta)   \big(N_{\alpha_2}^+(t)-N_{\alpha_2}^-(t)\big)\widehat g^*(t) -
 \mu_\eps^2   \frac {N_{\alpha_1}^+(t)-N_{\alpha_1}^-(t)}{t+\beta}\Big(\psi^*(0)+\frac 1 {\mu_\eps} e^{-zT}-\widehat g^*(z)\Big)  =\\
&
-\frac 1 {t+\beta} \bigg( \big(\Lambda^+(t)-\Lambda^-(t)\big)\widehat g^*(t)  +
 \mu_\eps^2   \big(N_{\alpha_1}^+(t)-N_{\alpha_1}^-(t)\big)\Big(\psi^*(0)+\frac 1 {\mu_\eps} e^{-zT} \Big) 
 \bigg) \stackrel\ddagger =\\
&
\Phi_{0,*}^+(t) -\Phi_{0,*}^-(t),\quad t>0, 
\end{align*}
where in $\dagger$ we used the expression for $\widehat \psi^*(z)$ in \eqref{psihatg}
and in $\ddagger$ the definition of $\widehat g^*(z)$ through \eqref{gTzstar} (where, as we already showed, $c^*=\psi^*(0)$).

By construction, both
$\Xi_0(z)$ and $\Phi_{0,*}(z)$ are sectionally holomorphic on $\mathbb C\setminus \Real_+$,  
share the same asymptotic as $z\to\infty$ and, by the above calculations, have the same jump on the boundary. 
Hence they coincide, which is what we wanted to show. The same arguments apply to $\Xi_1(z)$ and $\Phi_{1,*}(z)$.


\def\cprime{$'$} \def\cprime{$'$} \def\cydot{\leavevmode\raise.4ex\hbox{.}}
  \def\cprime{$'$} \def\cprime{$'$} \def\cprime{$'$}

\end{document}